\theoremstyle{plain}
\newtheorem{lem}{Lemma}[section]
\newtheorem{thm}[lem]{Theorem}
\newtheorem{cor}[lem]{Corollary}
\newtheorem{prop}[lem]{Proposition}
\theoremstyle{definition}
\newtheorem{defn}[lem]{Definition}
\newtheorem{fact}[lem]{Experimental Fact}
\newtheorem{rem}[lem]{Remark}
\newtheorem{ex}[lem]{Example}
\newtheorem{conj}[lem]{Conjecture}
\numberwithin{figure}{section}
\numberwithin{table}{section}
\let\ssection=\section
\renewcommand{\section}{\setcounter{equation}{0}\ssection}
\newcommand\notesn[1]{%
    \bgroup
    \renewcommand\thefootnote{\fnsymbol{footnote}}%
    \renewcommand\thempfootnote{\fnsymbol{mpfootnote}}%
    \footnotetext[0]{#1}%
    \egroup
}
\newcommand{\R}{\mathbb{R}}
\newcommand{\Z}{\mathbb{Z}}
\newcommand{\Q}{\mathbb{Q}}
\newcommand{\Id}{\mathrm{Id}}
\newcommand{\SL}{\mathrm{SL}}
\newcommand{\PSL}{\mathrm{PSL}}
\def\a{\alpha}
\def\b{\beta}
\def\d{\delta}
\def\D{\Delta}
\def\e{\varepsilon}
\def\si{\sigma}
\begin{document}

\title[$q$-deformed real numbers and continued fractions]
{Continued fractions for $q$-deformed real numbers, 
$\{-1,0,1\}$-Hankel determinants,\\ and Somos-Gale-Robinson sequences}
\notesn{Final version. To appear in Adv. in Appl. Math.}

\keywords{Hankel determinants, q-analogues, quadratic irrationals, continued fractions, Somos and Gale-Robinson sequences.}

\subjclass{Primary: 05A30, 11A55. Secondary: 11B37, 11C20, 37J70.}

\date{2024-09-19}

\author{Valentin Ovsienko}
\address{
Valentin Ovsienko,
Centre National de la Recherche Scientifique,
Laboratoire de Math\'ematiques de Reims, UMR~9008 CNRS et
Universit\'e de Reims Champagne-Ardenne,
U.F.R. Sciences Exactes et Naturelles,
Moulin de la Housse - BP 1039,
51687 Reims cedex 2,
France}
\email{valentin.ovsienko@univ-reims.fr}

\author{Emmanuel Pedon}
\address{
Emmanuel Pedon,
Laboratoire de Math\'ematiques de Reims, UMR~9008 CNRS et
Universit\'e de Reims Champagne-Ardenne,
U.F.R. Sciences Exactes et Naturelles,
Moulin de la Housse - BP 1039,
51687 Reims cedex 2,
France} 
\email{emmanuel.pedon@univ-reims.fr}

\begin{abstract}
$q$-deformed real numbers are power series with integer coefficients.
We study Stieltjes and Jacobi type continued fraction expansions of $q$-deformed real numbers
and find many new examples of such continued fractions.
We also investigate the corresponding sequences of Hankel determinants
and find an infinite family of power series for which several of the first  sequences of Hankel determinants consist of $-1,0$ and $1$ only.
These Hankel sequences satisfy Somos and Gale-Robinson recurrences.
\end{abstract}

\maketitle

\thispagestyle{empty}

\tableofcontents

\section{Introduction and main results} 

In this paper we study power series with integral coefficients
that represent so-called $q$-deformed real numbers, or ``$q$-reals'' for short.
We calculate the sequences of Hankel determinants of the simplest examples.
It turns out that (conjecturally) there exists a remarkable infinite family of $q$-reals that have a surprising property:
several first sequences of their Hankel determinants are periodic and consist of $-1,0$ and $1$ only.
This property is similar to that of the generating functions of Catalan and Motzkin numbers,
but the number of $\{-1,0,1\}$-sequences is greater.
Moreover, the obtained $\{-1,0,1\}$-Hankel sequences enjoy very special Somos and Gale-Robinson recurrences
related to integrable dynamics.

The initial definition of $q$-reals~\cite{SVRe} relies on continued fractions which are not of the
Stieltjes or Jacobi type.
One of our goals is to rewrite these series in the form of C-fractions and J-fractions
which is most close to the classical Stieltjes and Jacobi continued fractions.
In particular, we use the special class of Jacobi type continued fractions,
called H-fractions, that was introduced and studied in~\cite{Han}.
The problem remains open in general,
we solve it in particular cases.

\subsection{Continued fractions: classical examples} 

The following special class of continued fractions depending on one (formal) variable, that we denote~$q$,
are classically called {\it C-fractions}
\begin{equation}
\label{SJGen}
f(q) =
\cfrac{b_0}{1
          - \cfrac{b_1q^{p_1}}{1
          -\cfrac{b_2q^{p_2}}{\ddots}}}  
\end{equation}
Here $p=(p_1,p_2,\ldots)$ is a sequence of integers, $p_i\geq1$,
and $b=(b_0,b_1,b_2,\ldots)$ is a sequence of (real or complex)
coefficients, such that $b_i\neq0$ for all~$i\geq0$.
When $p_i\equiv1$, the C-fraction~\eqref{SJGen} is a classical Stieltjes continued fraction~\cite{Sti}, which
is also called a \textit{regular C-fraction}, or an \textit{S-fraction};
see~\cite{Fla,Vie} where combinatorial properties of continued fractions were studied.
Given a power series 
$$
f(q)=\sum_{i=0}^{\infty}f_iq^i,
$$ 
it can always be written as a C-fraction in a unique way~\cite{LeSc}.
For an explicit algorithm, see~\cite{Sok}.
C-fractions were extensively studied and applied to many sequences of integers; 
see, e.g.~\cite{Aig,Bar,Bus,Cig,Kra1,Kra2,LeSc,Sok} and references therein.

The generalized Jacobi continued fractions, or {\it J-fractions}, are of the form
\begin{equation}
\label{JacGen}
f(q) =
\cfrac{b_0}{1+qA_1(q)
          - \cfrac{b_1q^{p_1}}{1+qA_2(q)
          -\cfrac{b_2q^{p_2}}{\ddots}}}  ,
\end{equation}
where $A_i(q)$ are polynomials with $\deg(A_i)<p_i-1$.
When $p_i\equiv2$, \eqref{JacGen} is the classical Jacobi continued fraction,
or {\it regular J-fraction}.
As mentioned, every power series can be written in the form~\eqref{SJGen},
but a more flexible form~\eqref{JacGen} may be simpler and have more symmetries.
The counterpart is that such a fraction does not always exist for a given series.
Combinatorial theory of J-fractions was founded in~\cite{Fla,Vie}.
For a modern theory and applications, see~\cite{Kra1,Kra2,Han,HanE}.

The \textit{Catalan numbers} $C_n=1, 1, 2, 5, 14, 42,\ldots$ are related to the simplest example of
regular C-fraction.
Indeed, the generating function 
$$C(q)=\sum_{i=0}^\infty{}C_nq^n=\frac{1-\sqrt{1-4q}}{2q}$$
has the continued fraction expansion
$$
C(q) \quad=\quad
\cfrac{1}{1
          - \cfrac{q}{1
          -\cfrac{q}{\ddots}}}  
$$
see e.g.~\cite{AigEnum}.
Here and thereafter the {\it generating function} of a series is a function whose Taylor  expansion at $0$
coincides with the series.
Abusing the notation, we will often identify series, the corresponding generating functions, and continued fractions.
We will use the same notation for all of them.

Note also that the ``shifted series'' $\frac{C(q)-1}{q}$ has the following J-fraction expansion
$$
\frac{C(q)-1}{q}\quad=\quad
\cfrac{1}{1-2q
          - \cfrac{q^{2}}{1-2q
          -\cfrac{q^{2}}{\ddots}}}   
$$

The \textit{Motzkin numbers} $M_n=1, 1, 2, 4, 9, 21, 51,\ldots$ is another classical example.
The generating function 
$$
M(q)=\frac{1-q-\sqrt{(1+q)(1-3q)}}{2q^2}=\frac{1}{q}C\Big(\frac{q}{1+q}\Big)
$$ 
can be written as continued fractions
$$
M(q) \quad=\quad
\cfrac{1}{1-
           \cfrac{q}{1-
          \cfrac{q}{1-
          \cfrac{q^2}{ 1-
          \cfrac{q}{ 1-
          \cfrac{q}{1-
          \cfrac{q^2}{\ddots}}}}}}}
   \quad=\quad
\cfrac{1}{1-q
          - \cfrac{q^{2}}{1-q
          -\cfrac{q^{2}}{\ddots}}}         
          $$
Note that the second formula (that can be found in~\cite{BarSom}) is the simplest example of a regular J-fraction,
while the first one is a very nice example of C-fraction.

\subsection{$q$-reals: examples and ideas} 

The classical $q$-deformed integers are defined for $n\in\Z_{>0}$ by
\begin{equation}
\label{EGEq}
[n]_q:=1+q+q^2+\cdots+q^{n-1}=
\frac{1-q^n}{1-q}.
\end{equation}
They arose in the works of Euler and Gauss and play important role in combinatorics and mathematical physics.
The notion of $q$-deformed rationals~\cite{SVRa} and, more generally, that of $q$-deformed real numbers~\cite{SVRe}
extends $q$-integers.
Given $x\in\R$, the $q$-deformation $[x]_q$ is a series in~$q$ with integer coefficients.
A fundamental property of $q$-reals is $\PSL(2,\Z)$-invariance; see~\cite{LMG}.
Analytic properties of the series defining $q$-deformed real numbers were studied in~\cite{LMGOV}.

The simplest example of $q$-deformed irrational number is the $q$-deformation of~$\varphi=\frac{1+\sqrt{5}}{2}$,
called the ``golden ratio''.
This $q$-deformation will be denoted by $G(q)$ (instead of $\left[\varphi\right]_q$ as in~\cite{SVRe}).
It is given by the series
\begin{equation}
\label{GoEq}
\begin{array}{rcl}
G(q)&:=&
1 + q^2 - q^3 + 2 q^4 - 4 q^5 + 8 q^6 - 17 q^7 + 37 q^8 - 82 q^9 \\[4pt]
&& +\, 185 q^{10}- 423 q^{11} + 978 q^{12}-2283q^{13}+ 5373q^{14}-12735q^{15}\\[4pt]
&&+\,30372q^{16}-72832q^{17}+175502q^{18}-424748q^{19}+1032004q^{20} \cdots
\end{array}
\end{equation}
see~\cite{SVRe}.
 Note that the coefficients of this series are quite close to the sequence A004148; see~\cite{OEIS}.
The differences are the zero linear term in~\eqref{GoEq} and the alternating signs.
The series A004148 belongs to the class of sequences called in~\cite{BarSom} the 
``generalized Catalan numbers''\footnote{Note however that there are many other sequences known under this name.}.
It is connected to the Narayana triangle and has interesting combinatorial interpretations; see~\cite{BarN}. 

The generating function of the series~\eqref{GoEq} is
\begin{equation}
\label{GGF}
G(q)=
\frac{q^2+q-1+\sqrt{(1-q+q^2)(1+3q+q^2)}}{2q}.
\end{equation}
Similarly to the Motzkin case, the function~$G(q)$ 
can be obtained from the generating function of the Catalan numbers:
$$
G(q)=1+q-\frac{q}{1+q+q^2}\,C\bigg(\frac{q^2}{(1+q+q^2)^2}\bigg)
$$
(see~\cite{BarSom} for a similar expression).
The series~\eqref{GoEq} and its shifts have nice continued fraction expansions that have close resemblance 
with the Catalan and Motzkin continued fractions.

\begin{prop}
\label{TrueGoldProp}
The series~\eqref{GoEq} is represented by the following $2$-periodic C-fraction
\begin{equation}
\label{TrueGoldCF}
G(q)\;=\;
\cfrac{1}{1-
          \cfrac{q^2}{1+
          \cfrac{q}{1-
           \cfrac{q^2}{1+\cfrac{q}{\ddots}}}}} 
\end{equation}
\end{prop}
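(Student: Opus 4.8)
The plan is to exploit the $2$-periodicity of the C-fraction \eqref{TrueGoldCF}: turn it into a quadratic equation via a fixed-point argument, solve that equation, and match the resulting algebraic function with the generating function \eqref{GGF}, which the text has already identified as the generating function of the series \eqref{GoEq}.

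First I would record that, as a formal power series in $q$, the right-hand side of \eqref{TrueGoldCF} is well defined: every partial denominator has constant term $1$, so the successive convergents stabilize coefficient by coefficient in the $q$-adic topology and converge to a unique series $\widetilde{G}(q)$ with $\widetilde{G}(0)=1$ (this is the standard existence statement for C-fractions, cf.\ \cite{LeSc}). Writing $a$ for the value of the tail that begins with the partial denominator $1-\tfrac{q^{2}}{\cdots}$ and $b$ for the value of the tail that begins with $1+\tfrac{q}{\cdots}$, the periodicity of the pattern $(-q^{2},+q,-q^{2},+q,\dots)$ yields the two coupled relations
\begin{equation*}
a \;=\; 1-\frac{q^{2}}{b},\qquad b \;=\; 1+\frac{q}{a},
\end{equation*}
together with $\widetilde{G}=1/a$. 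Both $a$ and $b$ are power series with constant term $1$, so these expressions make sense and $1/a$ is a legitimate power series.

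Next I would eliminate $b$. Substituting $b=(a+q)/a$ into the first relation and clearing denominators gives, after a short computation, the quadratic
\begin{equation*}
a^{2}+(q^{2}+q-1)\,a-q \;=\; 0 .
\end{equation*}
Solving for $a$ and imposing $a(0)=1$ forces the $+$ sign of the square root; since the discriminant equals $(q^{2}+q-1)^{2}+4q=(1-q+q^{2})(1+3q+q^{2})$, one obtains
\begin{equation*}
a \;=\; \frac{1-q-q^{2}+\sqrt{(1-q+q^{2})(1+3q+q^{2})}}{2}.
\end{equation*}
Finally, rewriting the quadratic as $a\,(a+q^{2}+q-1)=q$ gives $\widetilde{G}=1/a=(a+q^{2}+q-1)/q$, and inserting the expression for $a$ reproduces exactly the generating function \eqref{GGF}. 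As \eqref{GGF} is the generating function of \eqref{GoEq}, this identifies $\widetilde{G}(q)=G(q)$ and proves the Proposition.

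I expect the only genuinely delicate point to be the justification of the fixed-point relations $a=1-q^{2}/b$ and $b=1+q/a$ in the formal setting, i.e.\ making precise that the periodic tails converge and may legitimately be substituted into themselves; everything after that is the elementary quadratic computation and the branch choice. Should one prefer to bypass the convergence discussion entirely, an alternative is to argue in reverse: verify directly that the series \eqref{GGF} satisfies $a^{2}+(q^{2}+q-1)a-q=0$ with $a=1/G$, and then invoke the uniqueness of the C-fraction expansion of a power series \cite{LeSc} to conclude that \eqref{TrueGoldCF} must be precisely that expansion.
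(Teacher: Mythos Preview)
Your proof is correct and is essentially the same approach as the paper's: the paper simply remarks that \eqref{TrueGoldCF} ``readily follow[s] from the functional equation \eqref{GREq}'', and your fixed-point computation is precisely the derivation of that quadratic relation (your equation $a^{2}+(q^{2}+q-1)a-q=0$ for $a=1/G$ is equivalent to $qG^{2}+(1-q-q^{2})G=1$). The only difference is direction: you go from the periodic C-fraction to the quadratic and match with \eqref{GGF}, whereas the paper would start from the known functional equation and unfold it into the continued fraction; both amount to the same one-line verification once the fixed-point relation is written down.
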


It turns out that the series~\eqref{GoEq} is also related to another, $3$-periodic, C-fraction.
Let us introduce the following notation for the shifted series
\begin{equation}
\label{NotGold}
G^{(1)}(q):=\frac{G(q)-1}{q},
\qquad
G^{(2)}(q):=\frac{G(q)-1}{q^2},
\qquad
G^{(3)}(q):=\frac{G(q)-1-q^2}{q^3},
\end{equation}
that will be useful in the sequel.
Note that the series $G^{(2)}(q)$ corresponds precisely to the sequence in the first column of A123634; see~\cite{OEIS}.

\begin{prop}
\label{GoldProp}
One has
\begin{equation}
\label{GoldJTF}
G^{(2)}(q)\quad=\quad
\cfrac{1}{1+
           \cfrac{q}{1+
          \cfrac{q}{1+
          \cfrac{q^3}{ 1+
          \cfrac{q}{ 1+
          \cfrac{q}{1+
          \cfrac{q^3}{\ddots}}}}}}}
\end{equation}
\end{prop}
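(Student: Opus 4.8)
The plan is to convert the $3$-periodicity of the continued fraction into a single algebraic equation and to check that it coincides with the equation satisfied by $G^{(2)}(q)$. First I would record the quadratic obeyed by $G(q)$ itself. Writing $\Delta := (1-q+q^2)(1+3q+q^2)$ for the radicand in \eqref{GGF}, isolating the square root as $\sqrt{\Delta} = 2qG - (q^2+q-1)$ and squaring removes it; since $\Delta - (q^2+q-1)^2 = 4q$, one is left with
\[
qG^2 - (q^2+q-1)G - 1 = 0 .
\]
Substituting $G = 1 + q^2 G^{(2)}$ from \eqref{NotGold}, the constant terms cancel and, after dividing by $q^2$, this becomes
\[
q^3 \bigl(G^{(2)}\bigr)^2 + (1+q-q^2)\, G^{(2)} - 1 = 0 .
\]
Reading this equation modulo successive powers of $q$ forces $G^{(2)}(0)=1$ and then determines every further coefficient, so $G^{(2)}$ is the \emph{unique} formal power series among the two roots.

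Next, let $F$ be the value of the $3$-periodic C-fraction in \eqref{GoldJTF}. Because the partial numerators $q,q,q^3,q,q,q^3,\dots$ repeat with period $3$, the tail appearing beneath the first block $q,q,q^3$ reproduces the whole fraction; hence $F$ satisfies the self-referential identity
\[
F = \cfrac{1}{1 + \cfrac{q}{1 + \cfrac{q}{1 + q^3 F}}} .
\]
Clearing the three nested fractions (a routine computation) yields
\[
q^3(1+q)\, F^2 + (1 + 2q - q^3)\, F - (1+q) = 0 .
\]
Since $(1+q)(1+q-q^2) = 1 + 2q - q^3$, this is exactly $(1+q)$ times the quadratic found for $G^{(2)}$; as $1+q$ is a unit in $\Z[[q]]$, the two equations have the same roots.

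It then remains to fix the branch. The C-fraction \eqref{GoldJTF} evaluates to a formal power series with value $1$ at $q=0$, matching $G^{(2)}(0)=1$, so by the uniqueness noted above $F = G^{(2)}(q)$, which is the claim. The one delicate point is the set-up of the periodicity equation: one must verify that the fraction genuinely closes up after a single period (that the tail below the first $q^3$ is the full fraction, not a shift of it) and that the common quadratic is solved by the power-series branch rather than the second root. Once these are settled, the remaining algebra — squaring out \eqref{GGF}, clearing the nested fractions, and comparing the two quadratics — is entirely mechanical.
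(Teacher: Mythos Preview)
Your proof is correct and follows essentially the same route as the paper: the paper simply says the identity ``readily follows from the functional equations \eqref{GREq} and \eqref{GRBisEq},'' and your argument is precisely an explicit unpacking of that remark --- you derive the quadratic $q^3(G^{(2)})^2+(1+q-q^2)G^{(2)}-1=0$ (which is \eqref{GRBisEq}), set up the fixed-point equation for the $3$-periodic fraction, and observe that the resulting quadratic is $(1+q)$ times the former. Your additional care about convergence as a formal power series and branch selection is appropriate and makes the argument self-contained.
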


The $q$-deformed golden ratio seems to be an inexhaustible source of beautiful continued fractions. Let us give two more formulas.

\begin{prop}
\label{GoldMotProp}
One has the following $2$-periodic continued J-fraction
\begin{equation}
\label{GoldMot}
G^{(2)}(q)
\quad=\quad
\cfrac{1}{1+q
          - \cfrac{q^2}{1+q
          +\cfrac{q^{3}}{1+q
          - \cfrac{q^2}{ 1+q
          + \cfrac{q^3}{ \ddots}}}}} 
\end{equation}
\end{prop}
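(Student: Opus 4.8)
The plan is to show that the continued fraction in \eqref{GoldMot} and the series $G^{(2)}(q)$, both regarded as elements of $\Z[[q]]$, satisfy one and the same quadratic equation, and then to conclude by uniqueness of the power-series solution.

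First I would analyse the continued fraction. Denote by $F_1$ the value of the $2$-periodic J-fraction in \eqref{GoldMot} and by $F_2$ the value of the fraction obtained by deleting its first level. All partial denominators equal $1+q$, while the partial numerators repeat with period two as $-q^2$, then $+q^3$; hence the two tails are linked by
\[
F_1 = \cfrac{1}{(1+q) - q^2 F_2}, \qquad F_2 = \cfrac{1}{(1+q) + q^3 F_1},
\]
the second identity expressing that the tail below $F_2$ is $F_1$ again, by periodicity. Clearing denominators gives $(1+q)F_1 - q^2 F_1 F_2 = 1$ and $(1+q)F_2 + q^3 F_1 F_2 = 1$. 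Eliminating $F_2$ (solve the first relation for $F_1 F_2$, substitute into the second to get $F_2 = 1 - qF_1$, then reinsert) produces the single quadratic
\[
q^3 F_1^2 + (1+q-q^2)\,F_1 - 1 = 0 .
\]

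Next I would obtain the same equation for $G^{(2)}(q)$. Rationalizing the generating function \eqref{GGF} shows that $G = G(q)$ satisfies $q\,G^2 - (q^2+q-1)\,G - 1 = 0$. Substituting $G = 1 + q^2\,G^{(2)}$, as dictated by the definition \eqref{NotGold} of $G^{(2)}$, and dividing through by $q^2$ yields precisely
\[
q^3 \big(G^{(2)}\big)^2 + (1+q-q^2)\,G^{(2)} - 1 = 0 ,
\]
the relation found above with $F_1$ replaced by $G^{(2)}$. Since $1+q-q^2$ is a unit in $\Z[[q]]$, this quadratic has a unique solution in $\Z[[q]]$: if $F$ and $F'$ were two, their difference would be annihilated by $q^3(F+F') + (1+q-q^2)$, which has constant term $1$ and is therefore invertible. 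As $F_1$ and $G^{(2)}$ both lie in $\Z[[q]]$ and both solve the equation, they coincide, which is the assertion of the proposition.

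The point demanding the most care is the passage from the infinite fraction to the algebraic system: one must check that \eqref{GoldMot} genuinely defines an element of $\Z[[q]]$ and that the self-referential equations for $F_1$ and $F_2$ are legitimate. This holds because every partial denominator has constant term $1$ while the successive numerators $q^2, q^3, \dots$ have strictly increasing order, so the finite truncations stabilize coefficient-by-coefficient in the $q$-adic topology. The only genuinely error-prone bookkeeping is keeping the two phases ``$-q^2$'' and ``$+q^3$'' correctly matched when identifying the tail below $F_2$ with $F_1$; once the system is set up, the eliminations are routine.
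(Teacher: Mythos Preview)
Your proof is correct and follows essentially the same approach as the paper, which simply remarks that the identity ``readily follows from the functional equations \eqref{GREq} and \eqref{GRBisEq}''; you have made that derivation explicit. One tiny slip: the numerators are $q^2,q^3,q^2,q^3,\dots$, so their orders are not ``strictly increasing'' but merely all $\ge 2$, which is still enough for the $q$-adic stabilization you invoke.
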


Despite their simplicity, we did not find the C-fractions~\eqref{TrueGoldCF} and~\eqref{GoldJTF}
nor the J-fraction~\eqref{GoldMot} in the literature. 
Their proofs will readily follow from the functional equations \eqref{GREq} and~\eqref{GRBisEq}
for the generating functions $G(q)$ and $G^{(2)}(q)$. 

Alternatively, the series $G^{(2)}(q)$ can be represented by a $1$-periodic J-type continued fraction
which is a particular case of Theorem~\ref{FristThm}.

\begin{prop}
\label{GoldDvaProp}
One has
\begin{equation}
\label{GoldHF}
G^{(2)}(q)
\quad=\quad
\cfrac{1}{1+q-q^2
          + \cfrac{q^3}{1+q-q^2
          +\cfrac{q^{3}}{\ddots}}} 
\end{equation}
\end{prop}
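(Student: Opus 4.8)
The plan is to exhibit the right-hand side of~\eqref{GoldHF} as the unique formal power series fixed point of a Möbius-type self-map, and then to check that $G^{(2)}(q)$ satisfies the same algebraic relation. First I would let $F(q)$ denote the value of the $1$-periodic continued fraction in~\eqref{GoldHF}. Since the fraction repeats with period one it is self-similar, so its value obeys the fixed-point equation
\[
F=\cfrac{1}{1+q-q^2+q^3F},
\]
which clears to the quadratic
\[
q^3F^2+(1+q-q^2)\,F-1=0.
\]
Over $\C[[q]]$ this quadratic has exactly one power-series solution (the other root has a pole at $q=0$), and since the convergents of~\eqref{GoldHF} agree with their limit to ever higher order in $q$, the continued fraction converges $q$-adically to precisely that distinguished root. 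It therefore remains to identify $G^{(2)}$ with the same root.

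Second, I would verify the quadratic for $G^{(2)}$ directly from the closed form. Combining~\eqref{GGF} with the definition $G^{(2)}=(G-1)/q^2$ from~\eqref{NotGold} gives
\[
G^{(2)}(q)=\frac{q^2-q-1+S}{2q^3},\qquad S:=\sqrt{(1-q+q^2)(1+3q+q^2)}.
\]
Writing $a:=1+q-q^2$, the defining relation reads $2q^3G^{(2)}+a=S$; squaring yields $4q^6(G^{(2)})^2+4q^3a\,G^{(2)}+a^2=S^2$, and the only real content is the short polynomial identity $S^2-a^2=4q^3$. Dividing by $4q^3$ returns exactly $q^3(G^{(2)})^2+a\,G^{(2)}-1=0$, the same quadratic as for $F$.

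I do not expect a serious obstacle: the whole argument reduces to the identity $S^2-a^2=4q^3$ together with a matching of initial data. The point deserving care is the branch and initial-condition bookkeeping---one must confirm that the sign of the square root in~\eqref{GGF} is the one with $S(0)=+1$, so that the numerator of $G^{(2)}$ vanishes to order $q^3$ and $G^{(2)}$ is regular at $q=0$ with $G^{(2)}(0)=1$, as is visible from~\eqref{GoEq}. This pins $G^{(2)}$ down as the unique power-series root of the quadratic, hence $G^{(2)}=F$. Equivalently, this quadratic is nothing but the functional equation for $G^{(2)}$, and the statement is the $1$-periodic instance of Theorem~\ref{FristThm}.
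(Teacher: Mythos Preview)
Your proof is correct and follows essentially the same route as the paper: both reduce the statement to the quadratic functional equation $q^3(G^{(2)})^2+(1+q-q^2)G^{(2)}=1$ (this is~\eqref{GRBisEq}), which is equivalent to the self-similarity of the $1$-periodic continued fraction~\eqref{GoldHF}. The only cosmetic difference is that you verify this quadratic directly from the closed form~\eqref{GGF} via the identity $S^2-a^2=4q^3$, whereas the paper obtains it by substituting $G=1+q^2G^{(2)}$ into~\eqref{GREq} (equivalently, as the $n=1$ instance of Theorem~\ref{FristThm}, which you yourself note).
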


Note that the connection between the continued fraction~\eqref{GoldHF}
and the sequence A004148 was observed in~\cite{BarN}.

We will  see in Section~\ref{XXX} that~\eqref{GoldHF}
belongs to the interesting class of {\it super $\d$-fractions} introduced and studied in~\cite{Han}.
When $\d=2$, they are called H-fractions and used to compute Hankel determinants.
They are particularly efficient in the case where some of the Hankel determinants associated with the series vanish.

We will also show that, unlike~\eqref{TrueGoldCF},~\eqref{GoldJTF}, and~\eqref{GoldMot}, 
the expression~\eqref{GoldHF} can be generalized to the infinite family of $q$-deformed  ``metallic numbers''. Recall that these numbers are the irrationals $y_n$ ($n\in\Z_{>0}$) defined by the following $1$-periodic regular continued fraction: 
\begin{equation}
\label{MetallY}
y_n=\left[n,n,n,\ldots\right]
=n+
\cfrac{1}{n
          + \cfrac{1}{n
          +\cfrac{1}{\ddots}}}
= \frac{n+\sqrt{n^2+4}}{2}
\end{equation}
In Theorem~\ref{FristThm}, we shall see that the $q$-deformation $[y_n]_q$ of any metallic number $y_n$ admits a $1$-periodic super $\d$-fraction.
For the golden ratio $y_1=\frac{1+\sqrt{5}}{2}=\varphi$, this is simply formula~\eqref{GoldHF} since $[y_1]_q=G(q)=1+q^2G^{(2)}(q)$. 

The next metallic number is $y_2=\sqrt{2}+1$ which is often called the ``silver ratio''.
The $q$-deformation $\left[y_2\right]_q$ is the series that starts as follows
\begin{equation}
\label{SilverSerEq}
\begin{array}{rcl}
S(q)
&=&
1+q+q^4-2q^6+q^7+4q^8-5q^9-7q^{10}+ 18q^{11}+ 7q^{12}-55q^{13}+ 18q^{14}\\[4pt]
&&+\, 146q^{15}- 155q^{16} - 322q^{17}+692q^{18}+ 476q^{19}- 2446q^{20}+ 307q^{21}\cdots
 \end{array}
\end{equation}
see~\cite{SVRe}.
This series was recently added to the OEIS; see Sequence~A337589.
Not much is known about it yet.
The generating function of the series~\eqref{SilverSerEq} is the function
\begin{equation}
\label{SilverGFEq}
S(q)=
\frac{q^3+2q-1+\sqrt{(1-q+q^2)(1+q+4q^2 +q^3 +q^4)}}{2q}.
\end{equation}

As in the case of the golden ratio, we will use the following notation for the shifted series
\begin{equation}
\label{NotSilver}
\begin{array}{rr}
\displaystyle
S^{(1)}(q):=\frac{S(q)-1}{q},
&
\displaystyle
S^{(2)}(q):=\frac{S(q)-1-q}{q^2},
\\[10pt]
\displaystyle
S^{(3)}(q):=\frac{S(q)-1-q}{q^3},
&
\displaystyle
S^{(4)}(q):=\frac{S(q)-1-q}{q^4}.
\end{array}
\end{equation}
Next statement is then the analogue of Proposition~\ref{GoldDvaProp} for the series~\eqref{SilverSerEq}. It is a particular case of Theorem~\ref{FristThm}.
\begin{prop}
\label{SilHFrac3}
We have
\begin{equation}
\label{RootTwoHF}
S^{(4)}(q)
\quad=\quad
\cfrac{1}{1+2q^2-q^3
          + \cfrac{q^{5}}{1+2q^2-q^3
          +\cfrac{q^{5}}{\ddots}}} 
\end{equation}
\end{prop}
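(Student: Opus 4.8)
The plan is to show that both sides of~\eqref{RootTwoHF} satisfy one and the same quadratic relation in the ring $\Z[[q]]$, and then to single out the relevant root by a valuation argument. Write $F$ for the right-hand side of~\eqref{RootTwoHF}. Its $1$-periodicity means that $F$ obeys the fixed-point relation
\[
F=\cfrac{1}{(1+2q^2-q^3)+q^5F},
\]
which, after clearing the denominator, becomes the quadratic
\[
q^5F^2+(1+2q^2-q^3)\,F-1=0 .
\]
It therefore suffices to prove that $S^{(4)}(q)$ satisfies this same quadratic and that the quadratic has a unique solution lying in $\Z[[q]]$.

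First I would extract the quadratic equation satisfied by $S(q)$ itself from the closed form~\eqref{SilverGFEq}. Setting $P:=q^3+2q-1$ and $D:=(1-q+q^2)(1+q+4q^2+q^3+q^4)$, I would isolate the radical as $2qS-P=\sqrt{D}$ and square, obtaining $4q^2S^2-4qP\,S+(P^2-D)=0$. The decisive simplification is the elementary computation
\[
P^2-D=(q^6+4q^4-2q^3+4q^2-4q+1)-(q^6+4q^4-2q^3+4q^2+1)=-4q ,
\]
so that dividing through by $4q$ collapses the relation to the clean quadratic
\[
q\,S(q)^2-(q^3+2q-1)\,S(q)-1=0 .
\]

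Next I would insert the change of variables $S(q)=1+q+q^4S^{(4)}(q)$, read off from~\eqref{NotSilver}, into this quadratic and expand. Writing $T:=S^{(4)}(q)$, all terms of degree $<4$ that do not involve $T$ cancel identically, and the whole expression turns out to be divisible by $q^4$; dividing by $q^4$ leaves precisely $q^5T^2+(1+2q^2-q^3)T-1=0$, i.e.\ the quadratic found above for $F$. This is the only genuine computation in the argument, and I expect it to be routine bookkeeping of the coefficients of $1$, $T$ and $T^2$.

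Finally, both $F$ and $S^{(4)}(q)$ lie in $\Z[[q]]$ with constant term $1$ — for $F$ from the lowest-order term of the continued fraction, for $S^{(4)}(q)$ from~\eqref{SilverSerEq}. Since the product of the two roots of the quadratic equals $-q^{-5}$, the second root has a pole of order $5$ at $q=0$ and cannot be a power series; hence the quadratic has a \emph{unique} solution in $\Z[[q]]$, forcing $F=S^{(4)}(q)$. The one point deserving care — the main, though mild, obstacle — is to justify that the infinite continued fraction $F$ actually converges in $\Z[[q]]$ and legitimately satisfies its fixed-point relation. This holds because each level of the fraction contributes a factor $q^5$, so truncating at depth $n$ alters the value only in degrees $\geq 5n$; the truncations thus form a $q$-adically Cauchy sequence whose limit satisfies the displayed relation, which closes the proof.
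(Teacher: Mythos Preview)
Your proof is correct and follows essentially the same approach as the paper: both derive the quadratic functional equation $q^5S^{(4)}(q)^2+(1+2q^2-q^3)S^{(4)}(q)=1$ (this is the last line of~\eqref{ShSRFEq}) and identify it with the fixed-point relation of the $1$-periodic continued fraction. The paper obtains this as the special case $n=2$ of Theorem~\ref{FristThm}, starting from the general functional equation~\eqref{RelY} rather than from the closed form~\eqref{SilverGFEq}; your added care about $q$-adic convergence and the uniqueness argument via the product of roots is more explicit than the paper but not a different route.
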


More  continued fractions related to $S(q)$ will be presented in Section~\ref{HSilverSec} and other examples of metallic
$q$-numbers will be treated in Section~\ref{Metal3Sec}.

\subsection{Hankel determinants}

Given a power series $f(q)=\sum_{i=0}^{\infty}f_iq^i$
or simply a sequence of numbers $f=(f_i)_{i\in\Z_{\geq0}}$, the corresponding {\it Hankel determinants} are
the determinants of the $n\times{}n$ matrices
\begin{equation}
\label{HankDet}
\D^{(\ell)}_n(f)=\left|
\begin{array}{cccc}
{f}_\ell & {f}_{\ell+1} & \cdots & {f}_{\ell+n-1} \\[4pt]
{f}_{\ell+1} &{f}_{\ell+2} & \cdots & {f}_{\ell+n} \\[4pt]
\vdots & \vdots && \vdots \\
 {f}_{\ell+n-1} & {f}_{\ell+n} & \cdots & {f}_{\ell+2n-2}
\end{array}
\right|,
\end{equation}
where $\ell,n=0,1,2,3,\ldots$, and where $\D^{(\ell)}_0(f):=1$ by convention.
These determinants are important characteristics of $f$ and appear in a variety of subjects.
The number $\ell$ is called the ``shift'' of the determinant $\D^{(\ell)}_n(f)$.
When $\ell=0$, we use the notation $\D_n(f)$.

Let us recall the classical examples of the Catalan numbers, studied by Aigner in~\cite{Aig}.
The first two sequences of Hankel determinants in this case are identically equal to~$1$,
and the third Hankel sequence consists in all natural numbers:
$$
\D_n\left(C\right)=1,\,1,\,1,\ldots
\qquad
\D^{(1)}_n\left(C\right)=1,\,1,\,1,\ldots
\qquad
\D^{(2)}_n\left(C\right)=n+1.
$$
Note that the Hankel sequences $\D_n\left(C\right)$ and $\D^{(1)}_n\left(C\right)$ completely characterize 
the sequence of Catalan numbers.

Another classical example is that of the Motzkin numbers, also studied by Aigner in~\cite{AigM}.
In this case, the first sequence of Hankel determinants is still identically~$1$, while the second is $3$-antiperiodic 
(and thus $6$-periodic) and
consists of $-1,0$, and $1$:
$$
\D_n\left(M\right)=1,\,1,\,1,\ldots
\qquad\qquad
\D^{(1)}_n\left(M\right)=1,\;1,\,0,-1,-1,\,0,\ldots
$$
for $n=0,1,2,3,\ldots$
The third sequence of Hankel determinants of the Motzkin numbers is 
\begin{eqnarray*}
\D^{(2)}_n\left(M\right) &=& 1,\,2,\,2,\,3,\,4,\,4,\,5,\,6,\,6,\,7,\,8,\,8,\ldots
\end{eqnarray*}

It is classical that, whenever $f(q)$ can be written as a regular C-fraction or a regular J-fraction,
 the determinants~$\D_n(f)$ can be calculated explicitly; see, e.g.~\cite{Kra1,Kra2}.
Much work has been done to generalize these classical results to the cases of more general continued
fractions such as~\eqref{SJGen} and~\eqref{JacGen}; see, e.g.~\cite{Bus,Cig,Han,HanE,Sok}.
We will try to adopt these results in our situation, although this is not always straightforward.
Our proofs are based on the notion of H-fractions of~\cite{Han}.

Sequences with Hankel determinants
consisting of $0,1$ and $-1$ were considered in~\cite{Bar1,BarSom,BarN,WYZ};
according to~\cite{Bar1} the question of characterization of such sequences was asked by Michael Somos.
We contribute to this study.

In Section~\ref{MetalH} we will prove the following property of the $q$-deformed golden ratio~\eqref{GoEq}.
\begin{thm}
\label{HankelGold}
\begin{enumerate}
\item The first four forward shifted sequences of Hankel determinants
corresponding to the series~$G(q)$ of the $q$-golden number~\eqref{GoEq} are $4$-antiperiodic
$$
\D^{(\ell)}_{n+4}(G)=-\D^{(\ell)}_n(G),
\qquad
\ell=0,1,2,3,
$$
 (and thus $8$-periodic) and consist of $0,1$, and $-1$ only. The sequences start as follows
\begin{equation}
\label{HankelGoldEq}
\begin{array}{rclrrrrrrrr}
\D_n(G) &=& 1,&\,1,&\,1,&\;\;0,&-1,&-1,&-1,&\,0,&\ldots\\[4pt]
\D^{(1)}_n(G)&=& 1,&\,0,&-1,&\;1,&-1,&\,0,&\,1,&-1,&\ldots\\[4pt]
\D^{(2)}_n(G)&=& 1,&\,1,&\,1,&\,0,&-1,&-1,&-1,&\,0,&\ldots\\[4pt]
\D^{(3)}_n(G)&=& 1,&-1,&\,0,&\,0,&-1,&\,1,&\,0,&\,0,&\ldots
\end{array}
\end{equation}
for $n=0,1,2,3,\ldots$
\item The first three rows above are interconnected: for all $n$,
\begin{equation*}
\D_n(G)=(-1)^n \D_{n-2}^{(1)}(G)=\D_n^{(2)}(G).
\end{equation*}
\end{enumerate}
\end{thm}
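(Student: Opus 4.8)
The plan is to reduce all four shifted sequences to ordinary Hankel determinants of shifts of the single series $H:=G^{(2)}$, to compute the Hankel determinants of $H$ from its H-fraction, and then to transport the result to the other rows by determinantal identities. First I would record the elementary reductions. Since $G_0=1$, $G_1=0$ and $G_2=1$, each shifted series in~\eqref{NotGold} is a genuine tail of $(G_i)_{i\geq0}$, so $\D^{(\ell)}_n(G)=\D_n(G^{(\ell)})$ for $\ell=0,1,2,3$ (with $G^{(0)}:=G$). The same relations~\eqref{NotGold} give $G^{(1)}=qH$, $G-1=q^2H$ and $G^{(3)}=(H-1)/q$; hence $\D^{(1)}_n(G)=\D_n(qH)$ and $\D^{(3)}_n(G)=\D^{(1)}_n(H)$, and the whole theorem is built from $H$ by the operations multiply-by-$q$, the affine map $1+q^2(\cdot)$, and index-shift.

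The core of the argument is the third row. By Proposition~\ref{GoldDvaProp}, $H$ is the $1$-periodic H-fraction~\eqref{GoldHF}, a super $\d$-fraction with $\d=2$, constant denominator $1+q-q^2$ and constant numerator $q^3$. I would feed this data into the Hankel-determinant theorem for H-fractions of~\cite{Han}: the gap $3$, being one larger than $\d=2$, produces exactly one vanishing determinant per period, while the unit scalar of the numerator $q^3$ forces the surviving values into $\{-1,1\}$. Since the fraction is $1$-periodic the resulting product formula is itself periodic, which simultaneously yields the antiperiodicity $\D^{(2)}_{n+4}(G)=-\D^{(2)}_n(G)$ and the explicit period $\D^{(2)}_n(G)=1,1,1,0,-1,\dots$, establishing the third line of~\eqref{HankelGoldEq}.

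To reach the remaining rows I would first establish the identity $\D_n(G)=\D^{(2)}_n(G)$ of part~(2), which then hands me the first row from the third. The Hankel matrix of $G$ equals that of $G-1=q^2H$ plus the rank-one matrix $e_0e_0^{\top}$, so the matrix-determinant lemma gives the exact relation $\D_n(G)=\D_n(q^2H)+\D_{n-1}(H)$; the functional equation $q^3H^2+(1+q-q^2)H-1=0$ (which forces $H_2=H_1^2+1$, and so on) is precisely what collapses $\D_n(q^2H)$ to $\D_n(H)-\D_{n-1}(H)$, whence $\D_n(G)=\D_n(H)=\D^{(2)}_n(G)$. The second and fourth rows, and the remaining identity $\D_n(G)=(-1)^n\D^{(1)}_{n-2}(G)$, I would extract from the Desnanot--Jacobi identities
\[
\D^{(\ell)}_n\,\D^{(\ell+2)}_{n-2}=\D^{(\ell)}_{n-1}\,\D^{(\ell+2)}_{n-1}-\bigl(\D^{(\ell+1)}_{n-1}\bigr)^2
\]
applied to $G$: at $\ell=0$, inserting $\D^{(2)}_n(G)=\D_n(G)$ expresses $\D^{(1)}_n(G)$ through the first row up to a sign fixed by the low-order values, which yields the second row and the factor $(-1)^n$; at $\ell=1$ one then obtains a recurrence determining the fourth row $\D^{(3)}_n(G)$ from the first three. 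A finite amount of directly computed initial data, propagated by these recurrences and by the $8$-periodicity of the preceding paragraph, fixes all four periods of~\eqref{HankelGoldEq}; the C-fraction~\eqref{TrueGoldCF} for $G$ provides an independent check of the first row.

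The main obstacle, I expect, is the presence of the zeros. The series are not normal, so the classical orthogonal-polynomial and J-fraction product formulas do not apply, which is exactly why the H-fraction formalism of~\cite{Han} is needed to pin down both the location of the vanishing determinants and the unit values of the surviving ones. A second, more delicate point is that the Desnanot--Jacobi recurrences degenerate at those very zeros—one cannot divide by a vanishing determinant—so recovering the second row and, above all, the fourth row (where the zeros come in pairs) requires a careful combination of several shifts, of the established periodicity, and of enough initial values to determine all the signs.
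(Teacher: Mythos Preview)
Your proposal has two genuine gaps.

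\textbf{First and most serious:} the $1$-periodic fraction~\eqref{GoldHF} is \emph{not} an H-fraction. In Han's framework~\eqref{HanFr} its data are $v_0q^{k_0}=1$ (so $k_0=0$) and $U_1(q)=1-q$, and the degree constraint $\deg(U_1)\le k_0+\d-2$ forces $\d\ge3$. The paper states this explicitly (see the remark following Theorem~\ref{FristThm}): \eqref{GoldHF} and more generally~\eqref{GenHF} are super $\d$-fractions with $\d=3$, and therefore the Hankel formula~\eqref{HanHanEq} does not apply to them. If you try to force $\d=2$ by taking $k_i=1$ for $i\ge1$, the degree condition fails, and a naive application of~\eqref{HanHanEq} would put the zeros of $\D^{(2)}_n$ at the even indices rather than at $n\equiv3\pmod4$, which is wrong. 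The paper's fix is to use the $3$-periodic H-fraction~\eqref{GoldH} for $G^{(2)}$ (Lemma~\ref{ManuFr}) and, separately, the $2$-periodic H-fraction~\eqref{GoldHDShift} for $G^{(3)}$; both are genuine $\d=2$ fractions to which~\eqref{HanHanEq} applies.

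\textbf{Second:} your argument for $\D_n(G)=\D_n^{(2)}(G)$ is circular. The matrix-determinant lemma indeed gives $\D_n(G)=\D_n(q^2H)+\D_{n-1}(H)$, but your ``collapse'' claim $\D_n(q^2H)=\D_n(H)-\D_{n-1}(H)$ is, upon substituting the first identity, literally equivalent to $\D_n(G)=\D_n(H)$, which is the statement you are trying to prove. The remark that the functional equation ``forces $H_2=H_1^2+1$, and so on'' is not a proof of a determinantal identity for all $n$. The paper instead obtains the interrelations of Part~(2) by two applications of Han's Lemma~\ref{Han2Lem}: once to $G(q)=1/(1-q^2F_1)$ with $F_1=1/(1+q+q^2G^{(1)})$ coming from~\eqref{TrueGoldCF}, giving $\D_n(G)=(-1)^n\D^{(1)}_{n-2}(G)$, and once via Proposition~\ref{HanShiftProp} (with $k=1$), giving $\D^{(1)}_n(G)=(-1)^{n-1}\D^{(2)}_{n-2}(G)$.

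Your Desnanot--Jacobi idea for the second and fourth rows is in principle workable, and you correctly identify the obstacle (division by vanishing determinants, especially the double zeros in the fourth row); but since the rows it would feed on are not yet established, this part of the plan does not stand on its own.
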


The first formula in~\eqref{HankelGoldEq} is not completely new; see Sequence A123634.
Sequences with generating function satisfying quadratic functional equations similar to our main examples
were studied in~\cite{WYZ,BarSom,Bar,CH} and~\cite{Hon2}.

The sequences of Hankel determinants $\D^{(\ell)}_n(G)$ with $\ell\geq4$ 
do not consist solely of $0,\,1$ and~$-1$,
but they form interesting patterns.
For instance, 
\begin{eqnarray}
\label{Gold4Eq}
\D^{(4)}_n(G)&=&1,\,2,\,0,-2,-3,-4,\,0,\,4,\,5,\,6,\,0,-6,-7,-8,\,0,\,8,\ldots
\end{eqnarray}
Observe a similarity with the Hankel sequence of the Motzkin numbers $\D^{(2)}_n\left(M\right)$.

We will prove in Section~\ref{MetalH}
that the series $G(q)$ (see~\eqref{GoEq}) is completely characterized by the first four sequences of Hankel determinants.
More precisely, we have the following statement.
\begin{thm}
\label{RecoverThm}
The series $G(q)$ is the only series with Hankel determinants as in~\eqref{HankelGoldEq}.
\end{thm}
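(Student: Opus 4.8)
The plan is to show that the four prescribed Hankel sequences determine the coefficients $f_k$ of any competing series $f(q)=\sum_k f_k q^k$ one at a time, by an inductive ``recovery'' argument of the type that already characterizes the Catalan numbers from their shift-$0$ and shift-$1$ Hankel sequences. The starting observation is purely linear-algebraic: in the Hankel matrix whose determinant is $\D^{(\ell)}_n(f)$, the entry of largest index $f_{\ell+2n-2}$ occurs only in the bottom-right corner. Expanding along the last column therefore gives
\begin{equation*}
\D^{(\ell)}_n(f)=f_{\ell+2n-2}\,\D^{(\ell)}_{n-1}(f)+R_{n,\ell},
\end{equation*}
where the cofactor of the corner is exactly the top-left minor $\D^{(\ell)}_{n-1}(f)$ and the remainder $R_{n,\ell}$ is a polynomial in $f_0,\dots,f_{\ell+2n-3}$ only. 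Consequently, whenever $\D^{(\ell)}_{n-1}(f)\neq0$ we may solve for $f_{\ell+2n-2}$ in terms of lower-index coefficients and the two prescribed determinant values $\D^{(\ell)}_n$ and $\D^{(\ell)}_{n-1}$.

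Next I would run a strong induction on $k$. For the base case, $\D^{(\ell)}_0=1$ and $\D^{(\ell)}_1=f_\ell$ recover $f_0,f_1,f_2,f_3$ directly from the four sequences (with nonzero pivot $\D^{(\ell)}_0=1$). For the inductive step, fix $k\geq4$ and pick a shift $\ell\in\{0,1,2,3\}$ of the same parity as $k$ together with the integer $n$ satisfying $\ell+2n-2=k$; the displayed identity then expresses $f_k$ as $(\D^{(\ell)}_n-R_{n,\ell})/\D^{(\ell)}_{n-1}$, provided the pivot $\D^{(\ell)}_{n-1}$ is nonzero. Since $R_{n,\ell}$ depends only on $f_0,\dots,f_{k-1}$, which by the induction hypothesis coincide with the coefficients of $G(q)$, and since the two determinant values are prescribed to equal those of $G$, the coefficient $f_k$ is forced to equal that of $G$. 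This proves $f=G$.

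The one point that genuinely requires the full hypothesis---and the step I expect to be the main obstacle---is guaranteeing that a nonzero pivot is always available. Because several of the Hankel determinants in~\eqref{HankelGoldEq} vanish, a single shift does not suffice: one must exploit that for each parity there are two admissible shifts ($\ell\in\{0,2\}$ when $k$ is even, $\ell\in\{1,3\}$ when $k$ is odd) whose pivots never vanish simultaneously. Using the $4$-antiperiodicity from Theorem~\ref{HankelGold}, the vanishing loci are located modulo $4$: $\D^{(0)}_m$ and $\D^{(2)}_m$ vanish exactly for $m\equiv3$, while $\D^{(1)}_m$ vanishes for $m\equiv1$ and $\D^{(3)}_m$ for $m\equiv2,3$. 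A short case check modulo $4$ then confirms that for every $k$ at least one of the two candidate pivots $\D^{(\ell)}_{n-1}$ is nonzero---for even $k$ the two failure conditions are $k/2\equiv3$ and $k/2\equiv0$, and for odd $k$, writing $m=(k-1)/2$, they are $m\equiv1$ and $m\equiv0,3$, none of which coincide---so the recovery never stalls. This is precisely why four shifted sequences are needed, whereas two suffice in the nondegenerate Catalan case.
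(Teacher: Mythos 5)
Your proof is correct and follows essentially the same route as the paper's: induction on the coefficients of a competing series, cofactor expansion of the two Hankel determinants having the new coefficient in the bottom-right corner, and the observation that for each parity of $k$ at least one of the two pivots $\D^{(\ell)}_{n-1}$ is nonzero, so the recovery never stalls. Your explicit mod-$4$ bookkeeping of the vanishing loci is in fact spelled out more carefully than in the paper, which merely asserts the non-simultaneous vanishing of the relevant determinants.
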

\noindent
Note that this kind of statements is obvious when the sequences of Hankel determinants contain no zero entries,
for instance the sequence of Catalan numbers is characterized by the first two sequences of Hankel determinants
consisting of $1$'s.
With presence of zeros the situation is more complicated.

For our second main example of silver ratio $S(q)$ (see~\eqref{SilverSerEq}), 
we have the following result that will be proved in Section~\ref{ProofSilerSec}.

\begin{thm}
\label{SilverHanThm}
\begin{enumerate}
\item The first four sequences of Hankel determinants corresponding to the series~$S(q)$ of the $q$-silver number are periodic with period~$12$
$$
\D^{(\ell)}_{n+12}(S)=\D^{(\ell)}_n(S),
\qquad
\ell=0,1,2,3,
$$
and consist in $-1,0,1$ only.
The sequences start as follows
\begin{equation}
\label{DelHan}
\begin{array}{rcrrrrrrrrrrrrr}
\D_n(S)
&=&
1,&\,1,&-1,&-1,&\,1,&\,0,&\,-1,&\,0,&\,0,&\,1,&\,0,&-1,&\ldots\\[4pt]
\D^{(1)}_n(S)
&=&
1,&\,1,&\,0,&-1,&\,0,&\,0,&-1,&\,0,&\,1,&\,1,&-1,&-1,&\ldots\\[4pt]
\D^{(2)}_n(S)
&=&
1,&\,0,&\,0,&-1,&\,0,&\,1,&-1,&-1,&\,1,&\,1,&-1,&\,0,&\ldots\\[4pt]
\D^{(3)}_n(S)
&=&
1,&\,0,&-1,&-1,&\,1,&\,1,&-1,&-1,&\,0,&\,1,&\,0,&\,0,&\ldots
\end{array}
\end{equation}
for $n=0,1,2,3,\ldots$
\item These four rows are interconnected: 
$$\Delta_n^{(\ell+1)}(S)=(-1)^{n-1}\Delta^{(\ell)}_{n+3}(S)
\quad \text{for }\ell=0,1,2.$$
\end{enumerate}
\end{thm}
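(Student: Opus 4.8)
The plan is to reduce everything to Guo-Niu Han's theory of super $\delta$-fractions \cite{Han}, which is precisely the device that reads Hankel determinants off a continued fraction even when some of them vanish. The first, elementary, step is to record that a shifted Hankel determinant of $S$ is an ordinary (shift-zero) Hankel determinant of a shifted series: from \eqref{NotSilver} one checks directly on the coefficient sequences that $\D^{(0)}_n(S)=\D_n(S)$, $\D^{(1)}_n(S)=\D_n(S^{(1)})$, $\D^{(2)}_n(S)=\D_n(S^{(2)})$ and $\D^{(3)}_n(S)=\D_n(S^{(3)})$. Thus the four sequences in \eqref{DelHan} are the diagonal Hankel sequences of the four series $S,S^{(1)},S^{(2)},S^{(3)}$, and the task is to produce, for each of them, a periodic super $\delta$-fraction of the type of Proposition~\ref{SilHFrac3} (these are the H-fractions collected in Section~\ref{HSilverSec}) and then to apply Han's evaluation formulas.

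For part~(1) I would proceed as follows. First, clearing the radical in \eqref{SilverGFEq} yields the quadratic functional equation satisfied by $S(q)$; this both justifies the continued fractions, through the self-similarity that the quadratic relation forces, and feeds Han's machinery. Second, applying Han's super $\delta$-fraction evaluation to the (short-)periodic expansions gives, for each shift $\ell\in\{0,1,2,3\}$, a Somos/Gale--Robinson type recurrence for $\D^{(\ell)}_n(S)$ whose coefficients are units. Third, I would verify the twelve initial values listed in \eqref{DelHan} by direct computation and then check that running the recurrence once through a full block of length $12$ reproduces these values; by induction this simultaneously gives the periodicity $\D^{(\ell)}_{n+12}(S)=\D^{(\ell)}_n(S)$ and, since an integer sequence that is both periodic and built from these seeds can only repeat the listed entries, the $\{-1,0,1\}$ property. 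The most economical variant is to establish part~(2) first and then carry out this computation only for $\ell=0$, since the interconnection then propagates $\D^{(0)}$ to $\D^{(1)},\D^{(2)},\D^{(3)}$.

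For part~(2), the identity $\D^{(\ell+1)}_n(S)=(-1)^{n-1}\D^{(\ell)}_{n+3}(S)$ for $\ell=0,1,2$, I would use a determinant identity together with the functional equation. Passing from shift $\ell$ to shift $\ell+1$ alters a single diagonal of the Hankel matrix, so Desnanot--Jacobi (Dodgson condensation) relates $\D^{(\ell+1)}_n$, $\D^{(\ell)}_{n+1}$ and the neighbouring minors; combining this with the quadratic functional equation — whose polynomial coefficients carry exactly the degrees responsible for the index shift $3$ and for the sign $(-1)^{n-1}$ — should yield the stated relation, in the same spirit as part~(2) of Theorem~\ref{HankelGold}, where the analogous shift is $2$. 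Alternatively, once the recurrences of part~(1) are in hand, the relation can simply be verified as an identity between the two already-computed periodic sequences.

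The step I expect to be the main obstacle is the systematic presence of zeros in \eqref{DelHan}. These zeros are exactly what makes the classical C-/J-fraction Hankel dictionary inapplicable and what forces the use of Han's super $\delta$-fraction formalism; the delicate point is to track how the vanishing entries are distributed and to confirm that the ``jumps'' in Han's construction reproduce precisely the claimed pattern of zeros, and likewise to keep the Desnanot--Jacobi step of part~(2) valid (or suitably reinterpreted) across the positions where a determinant vanishes.
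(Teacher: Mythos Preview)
Your overall framework---reduce to unshifted Hankel determinants of the shifted series and use Han's H-fraction machinery---is correct and matches the paper. But two concrete points diverge from what the paper does, and one of them is a genuine gap.

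\medskip
\textbf{Part~(2).} Your primary route via Desnanot--Jacobi rests on a false premise: you write that ``passing from shift $\ell$ to shift $\ell+1$ alters a single diagonal of the Hankel matrix,'' but the shift-$\ell$ Hankel matrix has $(i,j)$-entry $f_{\ell+i+j-2}$, so increasing $\ell$ by one shifts \emph{every} entry. Dodgson condensation relates an $n\times n$ determinant to its four corner $(n-1)$-minors and its central $(n-2)$-minor; none of these is a Hankel determinant shifted by~$3$, and there is no obvious way to manufacture the index jump $n\mapsto n+3$ with sign $(-1)^{n-1}$ from that identity alone. Your fallback---verify the relation after computing all four periodic sequences---is circular if, as you propose, Part~(2) is to be used to reduce Part~(1) to a single $\ell$.

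The paper's mechanism is different and specific: it proves short continued-fraction identities linking the shifted series themselves (Lemmas~\ref{LemSilH1} and~\ref{LemSilH0}, formulas \eqref{DelFromSqrt2}, \eqref{Sqrt2FromSiDel}, \eqref{Sqrt2FromSiDelBis}), each of the Han-lemma form $F=q^k/(1+qU-q^{k+2}G)$. Applying Lemma~\ref{Han2Lem} once per level yields, for instance, $\D_n=(-1)^n\D^{(1)}_{n-3}$ in three steps, and similarly $\D^{(1)}_n=-\D^{(3)}_{n-6}$; Proposition~\ref{HanShiftProp} supplies the link $\D^{(2)}_n=(-1)^n\D^{(3)}_{n-3}$. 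This is what produces the shift by $3$ and the sign, and it proves Part~(2) \emph{before} any periodicity is known.

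\medskip
\textbf{Part~(1).} A minor correction: Han's evaluation \eqref{HanHanEq} does not output ``a Somos/Gale--Robinson type recurrence''; it gives the Hankel determinants \emph{directly} as signed products of the $v_i$. The paper applies \eqref{HanHanEq} to the $8$-periodic H-fraction~\eqref{LongFrac} to compute $\D^{(1)}_n$ outright, then transports to $\ell=0,2,3$ via the relations above. The Somos-6 recurrence~\eqref{Somos6} is an a~posteriori observation (Section~\ref{GRSec}), not the engine of the proof. The $12$-periodicity arises either from reading off \eqref{HanHanEq} on the $8$-periodic data (since $s_{i+8}=s_i+12$) or, as in Remark~\ref{RemLem}, by combining $\D^{(1)}_n=-\D^{(3)}_{n-6}$ with $\D^{(3)}_n=-\D^{(1)}_{n-6}$.
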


It turns out that the next sequence is also $12$-periodic,
$\D^{(4)}_{n+12}(S)=\D^{(4)}_n(S)$,
with the following period
\begin{eqnarray*}
\D^{(4)}_n(S)
&=&
1,\,1,-2,-1,\,2,-1,-2,\,1,\,1,\,0,\,0,\,0,\,\ldots
\end{eqnarray*}
This differs the series $S(q)$ from $G(q)$; cf. formula~\eqref{Gold4Eq}.
The sequences $\D^{(\ell)}_n(S)$ with $\ell\geq5$ seem to be aperiodic.

Although Theorem~\ref{SilverHanThm} is quite similar to Theorem~\ref{HankelGold},
unlike the case of the golden ratio,
we are unable to recover the sequence~$S(q)$ from this information
and do not know if this is the only series with Hankel determinants as in~\eqref{DelHan}.

The number of  Hankel sequences consisting of $0,1$, and $-1$  for the $q$-deformation of 
the metallic numbers $y_k$ defined by~\eqref{MetallY} seems to grow as $k$ grows; see Conjecture~\ref{MainCon} below.

\subsection{Somos and Gale-Robinson recurrences}
It is very easy to check that
the first three sequences of Hankel determinants
$\D_n(G),\D^{(1)}_n(G)$, and $\D^{(2)}_n(G)$ (see~\eqref{HankelGoldEq}) satisfy the recurrence
\begin{equation}
\label{Somos4}
\D_{n+4}\D_n = \D_{n+3}\D_{n+1} - \D_{n+2}^2.
\end{equation}
Note also for comparison that the shifted Hankel sequence $\D^{(1)}_n\left(M\right)$ of the Motzkin numbers satisfies
the recurrence 
$
\D_{n+2}\D_n=\D_{n+1}^2-1.
$

Recurrence~\eqref{Somos4} is (an instance of) the {\it Somos}-4 sequence, whose general form is
$$
a_{n+4}a_n = \a\, a_{n+3}a_{n+1} - \b a_{n+2}^2,
$$
for arbitrary parameters~$\a,\b$.
This remarkable class of sequences was discovered by Michael Somos in the '80s.
It remained unnoticed for some time and was disclosed by David Gale~\cite{Gal,Gal1}.
Since then Somos sequences became very popular 
and were studied by many authors.

Relation between the Hankel determinants of sequences satisfying quadratic (``Catalan type'') recurrences and Somos-4 sequences
was discovered by Michael Somos who lectured about the subject in the fall of 2000
at  MIT Stanley Seminar in Combinatorics.
Somos conjectured, in general, the appearance of the Somos-4 sequence as the Hankel
determinants of a quadratic type sequence
(for the sequence A004148, see the Hankel determinant number wall A123634).
This relation was rediscovered by Paul Barry and also conjectured in~\cite{BarSom}.
The first detailed proof of this conjecture was given in~\cite{CH},
a deep connection to (hyper)elliptic curves and generalizations were obtained in~\cite{Hon2}.
Recurrence~\eqref{Somos4} can be considered as a part of these results.

Besides, recurrence~\eqref{Somos4} has an interesting geometric interpretation.
Taking independent variables~$(x,y,z,t)$, it  produces the map
$$
(x,y,z,t)\longmapsto\Big(y,z,t,\,\frac{yt-z^2}{x}\Big),
$$
which generates a $4$-dimensional discrete dynamical system {\it integrable} in the sense of Liouville-Arnold; 
see~\cite{Hon1, FH} and references therein.
The first three Hankel sequences 
$\D_n(G),\D^{(1)}_n(G)$, and $\D^{(2)}_n(G)$ thus
correspond to periodic trajectories of the above map, or equivalently to periodic solutions of~\eqref{Somos4}.

It is also easy to see that the first four sequences of Hankel determinants~\eqref{DelHan} satisfy the recurrence
\begin{equation}
\label{Somos6}
\D_{n+6}\D_n = \D_{n+5}\D_{n+1} - \D_{n+3}^2,
\end{equation}
which is a Somos-6 sequence.
This statement is closely related to the results of Andrew Hone~\cite{Hon2},
since the quadratic recurrence for the coefficients of~\eqref{SilverSerEq}
fits into the class considered in this paper; see Eq.~(4.22) in~\cite{Hon2}.
\medbreak

Let us conclude our introduction. 
A long series of computer experimentations lead us to the following conjecture.

\begin{conj}
\label{MainCon}
Let $k$ be a positive integer, and for $\ell\in\Z_{\geq 0}$ 
let $\Delta_n^{(\ell)}:=\Delta^{(\ell)}_{n}\bigl(\left[y_k\right]_q\bigr)$ denote as in~\eqref{HankDet} the $\ell$-shifted sequence of Hankel determinants associated with the $q$-deformation of the metallic number~$y_k$.

(a)
The $k+2$ sequences
$\Delta_n^{(0)},\Delta_n^{(1)},\dots,\Delta_n^{(k+1)}$ 
consist of~$-1,0,1$ only, and they are $2k(k+1)$-periodic when $k$ is even and $2k(k+1)$-antiperiodic (hence $4k(k+1)$-periodic) when $k$ is odd: 
\begin{equation*}
\D_{n+2k(k+1)}^{(\ell)}=(-1)^k\D_{n}^{(\ell)}\quad\text{for all } n\in\Z_{\geq 0}.
\end{equation*}
Moreover, all these sequences satisfy the recurrence
\begin{equation}
\label{GRk}
\D^{(\ell)}_{n+2k+2}\,\D^{(\ell)}_n = \D^{(\ell)}_{n+2k+1}\,\D^{(\ell)}_{n+1} - \bigl(\D^{(\ell)}_{n+k+1}\bigr)^2 \quad\text{for all } n\in\Z_{\geq 0}.
\end{equation}

(b)
The $k+1$ pairs of consecutive shifted sequences $(\Delta^{(\ell)}_{n},\Delta^{(\ell-1)}_{n})$ with $\ell=1,2,\ldots,k+1$ are interconnected by the formula:
\begin{equation*}
\Delta^{(\ell)}_{n}=(-1)^{n+\frac{k(k+2\ell+1)}{2}}\Delta^{(\ell-1)}_{n+k+1}
\quad\text{for all } n\in\Z_{\geq 0}.
\end{equation*}
\end{conj}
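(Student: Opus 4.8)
The plan is to reduce the whole conjecture to the interplay of four ingredients, each already visible in the golden and silver cases: a quadratic functional equation for $[y_k]_q$, the $1$-periodic super $\d$-fraction of Theorem~\ref{FristThm}, the Hankel-determinant calculus for such fractions from~\cite{Han}, and the Gale-Robinson dynamics of~\cite{Hon2}. First I would fix the generating function $F_k(q):=[y_k]_q$ and establish that it satisfies a quadratic functional equation $A_k(q)\,F_k^2+B_k(q)\,F_k+C_k(q)=0$ with explicit polynomials $A_k,B_k,C_k$, generalising~\eqref{GGF} and~\eqref{SilverGFEq}. Because $y_k$ is the fixed point of the M\"obius map $x\mapsto k+1/x$, this relation is forced by the $\PSL(2,\Z)$-equivariance of the $q$-deformation: the $q$-analogue of the matrix $\left(\begin{smallmatrix}k&1\\1&0\end{smallmatrix}\right)$ stabilises $F_k$, and transcribing that fixed-point identity yields the quadratic relation (the instances~\eqref{GREq} and~\eqref{GRBisEq} being the $k=1$ prototype).

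From the quadratic equation for a suitably shifted series $F_k^{(m)}$ — the shift chosen exactly as $G^{(2)}$ and $S^{(4)}$ are singled out in~\eqref{NotGold} and~\eqref{NotSilver} — I would read off the $1$-periodic super $\d$-fraction of Theorem~\ref{FristThm}, whose numerator exponent grows linearly in $k$ (it is $q^3$ in~\eqref{GoldHF} and $q^5$ in~\eqref{RootTwoHF}). Feeding this fraction into the Hankel-determinant product formula for super $\d$-fractions of~\cite{Han}, which is designed to remain valid when some determinants vanish, turns the $1$-periodicity of the continued fraction into an explicit product expression for the Hankel determinants of this distinguished shift; I would then check that this product is periodic, with values in $\{-1,0,1\}$, recovering the $2k(k+1)$-periodicity (antiperiodicity when $k$ is odd) of part~(a) for one shift.

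The remaining shifts and the two structural claims would then follow by propagation. The quadratic functional equation induces a short linear recurrence among the coefficients $f_i$ of $F_k$; performing the corresponding row and column operations on the Hankel matrices relates the determinants of neighbouring shifts, which is precisely the content of the interconnection formula in part~(b) (its $k=1,2$ instances being Theorems~\ref{HankelGold}(2) and~\ref{SilverHanThm}(2)). Combined with Desnanot--Jacobi (Dodgson condensation), this both transports the $\{-1,0,1\}$ property and the period from the distinguished shift to all of $\ell=0,\dots,k+1$, and, via the results of Hone~\cite{Hon2} on Hankel determinants of quadratic-type sequences, yields the Gale-Robinson recurrence~\eqref{GRk}, which specialises to the Somos-$4$ relation~\eqref{Somos4} for $k=1$ and the Somos-$6$ relation~\eqref{Somos6} for $k=2$.

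The genuine obstacle, and the reason the statement is only a conjecture, is the uniform control of the periodic orbit as $k$ grows. Once~\eqref{GRk} is known, each $\D^{(\ell)}_n$ is a trajectory of a Liouville--Arnold integrable Gale-Robinson map, and a sequence that is periodic with values confined to $\{-1,0,1\}$ is an extremely special, torsion-like periodic orbit on the spectral curve attached to that map in~\cite{Hon2}. Proving that the orbit closes up after exactly $2k(k+1)$ steps, never leaves $\{-1,0,1\}$, and does so simultaneously for the first $k+2$ shifts requires tracking how the genus and the torsion data of that curve depend on $k$; the explicit $k=1,2$ computations give no pattern robust enough to extrapolate rigorously. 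I expect this spectral-curve analysis, rather than any of the algebraic reductions above, to be where a complete proof is currently stuck.
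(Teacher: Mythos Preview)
The statement is a \emph{conjecture} in the paper, not a theorem; the paper proves only the cases $k=1$ (Theorem~\ref{HankelGold}) and $k=2$ (Theorem~\ref{SilverHanThm}), reports computer verification for $k\le 8$ and $n\le 300$, and explicitly leaves the general case open. So there is no paper proof to compare your sketch against, and your own final paragraph already concedes that the plan does not close.

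That said, your sketch contains a concrete technical gap well before the place where you locate the obstruction. You propose to feed the $1$-periodic super $\d$-fraction of Theorem~\ref{FristThm} into the Hankel-determinant product formula of~\cite{Han}. But that formula, equation~\eqref{HanHanEq}, is stated and proved only for H-fractions, i.e.\ super $\d$-fractions with $\d=2$, whereas Theorem~\ref{FristThm} produces a fraction with $\d=3$. The paper flags exactly this point in the remark following Theorem~\ref{FristThm}: the $1$-periodic fractions~\eqref{GenHF} ``are of type~\eqref{HanFr}, but unfortunately they are not H-fractions, since~$\d=3$. Therefore, the methods of~\cite{Han} to calculate the Hankel determinants cannot be applied to them.'' The paper's workaround for $k=1,2$ is to construct, by hand, \emph{different} continued fractions that are genuine H-fractions --- the $3$-periodic~\eqref{GoldH} and the $8$-periodic~\eqref{LongFrac} --- and to apply~\eqref{HanHanEq} and Lemma~\ref{Han2Lem} to those. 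No uniform-in-$k$ H-fraction is available, and finding one is essentially the missing ingredient. Your propagation step via Desnanot--Jacobi is likewise more optimistic than what the paper actually does: the interconnections in part~(b) for $k=1,2$ come from Lemma~\ref{Han2Lem} applied to explicit short continued-fraction identities such as~\eqref{DelFromSqrt2} and~\eqref{Sqrt2FromSiDel}, each derived ad hoc from the quadratic functional equation, not from a general condensation argument.
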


Recurrence~\eqref{GRk} is called the {\it three-term Gale-Robinson} recurrence; see~\cite{Gal1,FZ}.
Integrability of the Gale-Robinson systems was proved by Fordy and Hone~\cite{FH}.
The Hankel determinants of $q$-deformed metallic numbers are, conjecturally, periodic 
$\{-1,0,1\}$-solutions of the corresponding discrete integrable systems.

The status of the conjecture is as follows.
Theorems~\ref{HankelGold} and~\ref{SilverHanThm} confirm the conjecture for $k=1$ and $k=2$ (for arbitrary $n$).
Applying Algorithms of~\cite{Han} (Section~3), one can obtain computer assisted proofs for further small values of~$k$,
and this has been done for~$k\leq8$ and $n\leq300$.
More details for $k=3$ and~$4$, and the explicit sequences of the Hankel determinants in these cases 
will be given in Section~\ref{Metal3Sec}.

\medbreak

\noindent\textbf{Organization}.
The paper is organized as follows.

In Section~\ref{TotalRecall}, we present the definition of $q$-reals and give a few examples.
We follow~\cite{SVRe,SVRa} and emphasize the continued fraction presentation.
We also discuss the $\PSL(2,\Z)$-invariance property.
The family of $q$-deformed ``metallic'' irrationals is described with some details.

In Section~\ref{HFrSec}, we develop the technique based on continued fractions.
We use the notion of super $\d$-fractions and H-fraction introduced  in~\cite{Han}.
We consider the series of $q$-deformed metallic numbers and find their $\d$-fraction presentation.

In Section~\ref{HankSec}, we prove 
 Theorems~\ref{HankelGold},~\ref{RecoverThm}, and~\ref{SilverHanThm}.
We also calculate the first sequences of Hankel determinants
for several other examples of metallic numbers and show that the phenomenon
detected for~$G(q)$ and~$S(q)$ persists and is amplified.
We demonstrate appearance of the recurrence~\eqref{GRk}.
These results remain experimental for other metallic numbers.

The final Section~\ref{EverySec} contains miscellaneous examples of continued fractions
representing $q$-numbers.
The general theory of C- and J-fraction presentation of $q$-numbers is yet to be developed.

\section{$q$-deformed real numbers} \label{TotalRecall}

In this section, we present the definition of $q$-deformed real numbers~\cite{SVRa,SVRe}
and give several examples.
The main property of $q$-reals is that of $\PSL(2,\Z)$-invariance~\cite{LMG}.
Other important properties have recently been studied in~\cite{Bap,Ove,LMGOV,SVS,OR,Ren,Sik}.

\subsection{$q$-deformed continued fractions}
The original definition consists in $q$-deformation of continued fractions,
but it is not of the form~\eqref{SJGen}.

Given $x\in\R$, let us consider its standard continued fraction expansions
$$
x\;=\;
c_1 - \cfrac{1}{c_2 
          - \cfrac{1}{\ddots } }
\qquad\hbox{and}\qquad
x\;=\;
a_1 + \cfrac{1}{a_2 
          + \cfrac{1}{\ddots } },
$$
where $(c_1,c_2,\ldots)$ and $(a_1,a_2,\ldots)$ are sequences of integers such that
$a_i\geq1$ and $c_i\geq2$, for all $i\geq2$.
The first continued fraction expansion is known under the name of Hirzebruch-Jung continued fraction
(sometimes called the ``negative'', ``minus'', or ``reversal'' continued fraction),
the second one is the most classical  continued fraction expansion.
We will use the notation
$$
x=\llbracket{}c_1,c_2\ldots\rrbracket{}
\qquad\hbox{and}\qquad
x=\left[a_1,a_2,\ldots\right],
$$
respectively.
Note that the first notation is due to Hirzebruch.
The coefficients $a_i$ and $c_j$ of the above expansions
are connected by the Hirzebruch
formula; see, e.g.~\cite{Hir,MGO}.

The above continued fractions are finite if and only if $x$ is rational
and infinite (converging to $x$) when $x$ is irrational.

\begin{defn} [\cite{SVRa,SVRe}]
\label{MainDef}
The $q$-analogue of~$x$ is the formal series $[x]_q$ defined by any of the following (equal) continued fractions:
\begin{eqnarray}
\label{QxEq1}
[x]_q
&=&
[c_1]_{q} - \cfrac{q^{c_{1}-1}}{[c_2]_{q} 
          - \cfrac{q^{c_{2}-1}}{\ddots } }\\[10pt]
&=&
[a_1]_{q} + \cfrac{q^{a_{1}}}{[a_2]_{q^{-1}} 
          + \cfrac{q^{-a_{2}}}{[a_{3}]_{q} 
          +\cfrac{q^{a_{3}}}{[a_{4}]_{q^{-1}}
          + \cfrac{q^{-a_{4}}}{
        \ddots} }}}
        \label{QxEq2}
 \end{eqnarray}
where $[n]_q$ stands for the $q$-integer as in~\eqref{EGEq}, and $[n]_{q^{-1}}=q^{1-n}[n]_q$ is the same expression with reciprocal parameter.
\end{defn}

The continued fractions~\eqref{QxEq1} and~\eqref{QxEq2} coincide when $x$ is rational~\cite{SVRa}.
When $x$ is irrational, the convergence of the expansions~\eqref{QxEq1} and~\eqref{QxEq2} and their coincidence is
guaranteed by the stabilization phenomenon highlighted in~\cite{SVRe}.
Note that, in the rational case, the second formula~\eqref{QxEq2} 
was also suggested (independently and almost simultaneously) in~\cite{Sik}.
Let us also mention that, when $x$ is rational, $[x]_q$ is a rational function in~$q$ that has many nice properties, 
such as unimodality and total positivity; see~\cite{SVRa,OR,CSS,Ove}. Another important property is that the $q$-deformation of any quadratic irrational number is necessarily periodic; see~\cite{LMG}. This will be observed in all the examples treated in our article.

\begin{rem}
The continued fraction~\eqref{QxEq1} has the type of \textit{Thron  fraction}, or \textit{T-fraction}; see~\cite{Thr}.
Indeed, the degree of every numerator is equal to the degree of the preceding polynomial.
Continued fractions of this type were used in various aspects of the theory of continued fractions and combinatorics,
but they do not allow to calculate Hankel determinants.
\end{rem}

\begin{ex}
(a)
The simplest example of a $q$-deformed irrational
is the golden ratio $\varphi=\frac{1+\sqrt{5}}{2}$ already discussed in the introduction.
The regular continued fraction of the golden ratio is 
$\varphi=[1,1,1,\ldots]$, and the Hirzebruch-Jung expansion is 
$\varphi=\llbracket{}2,3,3,3,\ldots\rrbracket{}$.
Recall that the $q$-deformation of $\varphi$ was denoted by~$G(q)$.
Formulas~\eqref{QxEq1} and~\eqref{QxEq2} then read
\begin{equation}
\label{GoldNaive}
\begin{array}{rcl}
G(q)
&=&
1+q- \cfrac{q}{1+q+q^2
          - \cfrac{q^{2}}{1+q+q^2
          -\cfrac{q^{2}}{\ddots
       }}} \\[50pt]
&=&
1 + \cfrac{q^{2}}{q
          + \cfrac{1}{1 
          +\cfrac{q^{2}}{q
          + \cfrac{1}{\ddots
       }}}} 
  \end{array}
\end{equation}
The explicit power series is as in~\eqref{GoEq}.

(b)
The regular continued fraction of the silver ratio $y_2$ is $1$-periodic:
$y_2=[2,2,2,\ldots]$, and the Hirzebruch-Jung expansion has period~$2$:
$y_2=\llbracket{}3,2,4,2,4,2,4,\ldots\rrbracket{}$.
The $q$-deformation $S(q)$ is then given by the series represented by the following $2$-periodic continued fractions
\begin{equation}
\label{SilverEq}
\begin{array}{rcl}
S(q)
&=&
[3]_q- \cfrac{q^{2}}{1+q
          - \cfrac{q}{1+q+q^2+q^3
          -\cfrac{q^{3}}{1+q
          - \cfrac{q}{\ddots
       }}}} \\[66pt]
&=&
1 +q+ \cfrac{q^{4}}{q+q^2
          + \cfrac{1}{1 +q
          +\cfrac{q^{4}}{q+q^2
          + \cfrac{1}{\ddots
       }}}} 
 \end{array}
\end{equation}

(c)
The formula for the power series $\left[\sqrt{2}\right]_q$ can be deduced from~\eqref{SilverEq} with the help of the recurrence formula~\eqref{RecEq} below, since $\left[\sqrt{2}\right]_q=S^{(1)}(q)$.

(d) For more examples, see~\cite{SVRe,LMG}.
\end{ex}

Clearly, the continued fractions~\eqref{GoldNaive},~\eqref{SilverEq},
and more generally \eqref{QxEq1} and~\eqref{QxEq2}, are neither C-fractions nor J-fractions,
since the powers of~$q$ in the numerators are not high enough.
Rewriting one of the fractions~\eqref{QxEq1} or~\eqref{QxEq2} in the form~\eqref{SJGen}  
and/or~\eqref{JacGen} is a challenging open problem.

\subsection{$\PSL(2,\Z)$-invariance}\label{SLSec}
Modular invariance, i.e., $\PSL(2,\Z)$-invariance is the main property of $q$-numbers.
It can be used as an equivalent (and perhaps more conceptual) definition, comparing to Definition~\ref{MainDef}.

The group $\SL(2,\Z)$ is the group of unimodular matrices with integer coefficients
$$
M=\begin{pmatrix}r&v\\s&u\end{pmatrix},
\qquad 
r,v,s,u\in\Z,
\quad
ru-vs=1.
$$
It acts on $\R\cup\{\infty\}$ by linear-fractional transformations: 
\begin{equation}
\label{LFAct}
M\cdot x=\frac{rx+v}{sx+u}.
\end{equation}
This action is effective for the modular group $\PSL(2,\Z)$, which is
the quotient of $\SL(2,\Z)$ by its center $\{\pm\Id\}$.
It can be generated by two elements, and the standard choice of generators is
$$
T=\begin{pmatrix}
1&1\\[4pt]
0&1
\end{pmatrix},
\quad
\quad
S=\begin{pmatrix}
0&-1\\[4pt]
1&0
\end{pmatrix}
$$
with the relations $S^2=(TS)^3=\Id$.

As in~\cite{SVRa,LMG}, consider the following matrices depending on~$q$
\begin{equation}
\label{TqSq}
T_q=\begin{pmatrix}
q&1\\[4pt]
0&1
\end{pmatrix},
\quad
\quad
S_q=\begin{pmatrix}
0&-1\\[4pt]
q&0
\end{pmatrix}.
\end{equation}
Viewed as elements of $\mathrm{PGL}(2,\Z[q,q^{-1}])$, 
the matrices $T_q$ and $S_q$ satisfy the same relations as $T$ and $S$,
namely $S_q^2=(T_qS_q)^3=\Id$.
Therefore, they generate a representation 
$$
\rho:\PSL(2,\Z)\to
\mathrm{PGL}(2,\Z[q,q^{-1}]),
$$ 
and hence an action on the space $\Z((q))\cup\{\infty\}$ of formal Laurent series in~$q$
defined by linear-fractional transformations as in formula~\eqref{LFAct}.
$\PSL(2,\Z)$-invariance then reads
$$
\left[M\cdot{}x\right]_q=\rho(M)\cdot{}\left[x\right]_q.
$$
Since the linear-fractional action \eqref{LFAct} is transitive on~$\Q\cup\{\infty\}$,
one can understand $q$-rationals as an orbit of one point, 
that can be chosen $[0]_q=0$, or $[1]_q=1$, etc. under the $\PSL(2,\Z)$-action on $\Z((q))\cup\{\infty\}$ defined by~\eqref{TqSq}.

Using the generators~\eqref{TqSq}, this can be stated as recurrence relations
\begin{equation}
\label{RecEq}
\left[x+1\right]_q=q\left[x\right]_q+1
\qquad\qquad
\left[-\frac{1}{x}\right]_q=-\frac{1}{q\left[x\right]_q},
\end{equation}
and when $x\in\Q$, these relations suffice to determine the rational function~$\left[x\right]_q$
if we know that $\left[0\right]_q=0$.
Formulas~\eqref{QxEq1} and~\eqref{QxEq2} readily follow from the $\PSL(2,\Z)$-invariance, at least for rational~$x$.
Indeed, if $x=\llbracket{}c_1,c_2\ldots,c_n\rrbracket{}$ then
$$
\left[x\right]_q=T_q^{c_1}S_qT_q^{c_2}S_q\cdots{}T_q^{c_n}\cdot0,
$$
hence~\eqref{QxEq1}.
The second formula~\eqref{QxEq2} involving the regular continued fraction can be deduced in a similar way; see~\cite{LMG}.

A straightforward generalization of the first formula in~\eqref{RecEq} will be of interest:
\begin{equation}
\label{qtrans}
[x+k]_q=q^k[x]_q+[k]_q\quad (x\in\R,\ k\in\Z_{\geq0}).
\end{equation} 

Let us also mention that the above $\PSL(2,\Z)$-invariance can  be understood 
as invariance with respect to the Burau representation of
the braid group $B_3$; see~\cite{Bap,SVS}.

\subsection{The gap theorem}
We give a refined form of the ``gap theorem'' which was proved in~\cite{SVRe}.

\begin{thm}
\label{GapThm}
If $k\leq{}x<k+\frac{1}{n}$ for some $k\in\Z_{\geq 0}$ and $n\in\Z_{> 0}$, then the series $\left[x\right]_q$ starts as follows
\begin{equation*}
\left[x\right]_q
=[k]_q\,+\,q^{k+n}+\kappa_{k+n+1}\,q^{k+n+1}+\cdots
\end{equation*}
i.e., if $k\geq 1$,
\begin{equation*}
\left[x\right]_q
=1+q+\cdots+q^{k-1}\,+\,q^{k+n}+\kappa_{k+n+1}\,q^{k+n+1}+\cdots
\end{equation*}
where $\kappa_i$ are some integer coefficients.
\end{thm}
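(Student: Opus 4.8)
\emph{The plan} is to reduce to $k=0$ and then read the lowest-order term of $[y]_q$ directly off the regular continued fraction \eqref{QxEq2}. Writing $y:=x-k\in[0,\tfrac1n)$, the shift formula \eqref{qtrans} gives
\[
[x]_q=q^{k}[y]_q+[k]_q .
\]
Since $\deg[k]_q=k-1<k+n$, the term $[k]_q$ only contributes in degrees $0,\dots,k-1$, so the assertion is equivalent to the statement that $[y]_q$ has no term of degree $<n$ and that its coefficient of $q^{n}$ equals $1$: the intermediate coefficients $q^{k},\dots,q^{k+n-1}$ of $[x]_q$ then vanish and $q^{k+n}$ appears with coefficient $1$. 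The degenerate case $y=0$ (that is, $x=k$) gives $[x]_q=[k]_q$ with no further terms, so I would assume $0<y<\tfrac1n$, i.e. $1/y>n$.

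First I would expand $[y]_q$ by \eqref{QxEq2}. As $y\in(0,1)$, its regular continued fraction is $y=[0,a_2,a_3,\dots]$ with $a_2=\lfloor 1/y\rfloor\ge n$, and $a_1=0$ makes $[a_1]_q=0$, $q^{a_1}=1$, whence
\[
[y]_q=\cfrac{1}{[a_2]_{q^{-1}}+\cfrac{q^{-a_2}}{[w]_q}},\qquad [w]_q=[a_3]_q+\cfrac{q^{a_3}}{[a_4]_{q^{-1}}+\cdots}.
\]
Comparing the inner fraction with \eqref{QxEq2} for $w=[a_3,a_4,\dots]$ identifies the tail as the series $[w]_q$ of a real number $w\ge1$; applying \eqref{qtrans} once more (its constant term coming from $[\lfloor w\rfloor]_q$, the remaining part being $O(q^{\lfloor w\rfloor})$ with $\lfloor w\rfloor\ge1$) shows $[w]_q=1+O(q)$, so $[w]_q$ is invertible in $\Z[[q]]$ and $q^{-a_2}/[w]_q=q^{-a_2}+O(q^{1-a_2})$.

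The heart of the argument is a valuation count in the denominator. By \eqref{EGEq}, $[a_2]_{q^{-1}}=q^{1-a_2}[a_2]_q=q^{1-a_2}+\cdots+1$ has lowest-degree term $q^{1-a_2}$, whereas $q^{-a_2}/[w]_q$ has the strictly lower lowest-degree term $q^{-a_2}$. Hence the denominator equals $q^{-a_2}\bigl(1+O(q)\bigr)$ with no cancellation at the bottom degree, and inverting gives $[y]_q=q^{a_2}\bigl(1+O(q)\bigr)$; when the continued fraction terminates (the rational case $y=1/a_2$) the tail is absent and the same computation yields $[y]_q=q^{a_2-1}/[a_2]_q=q^{a_2-1}\bigl(1+O(q)\bigr)$. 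In both cases the lowest-degree term is $q^{m}$ with coefficient $1$, where $m$ is exactly the largest integer with $y<1/m$. Taking $n=m$ (equivalently $n=\lfloor 1/(x-k)\rfloor$, the sharp gap length) yields the refined statement; for a smaller admissible $n$ only the weaker vanishing up to degree $q^{k+n-1}$ survives, which still produces the gap.

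The main obstacle is analytic rather than formal. To manipulate the infinite nested fraction \eqref{QxEq2} as a Laurent series, and to know that the displayed tail converges to a well-defined $[w]_q$, I would invoke the stabilization phenomenon of \cite{SVRe} (which also guarantees the equality of \eqref{QxEq1} and \eqref{QxEq2}). The other delicate point is the non-cancellation at the bottom degree: it is precisely the distinctness of the two valuations $-a_2$ and $1-a_2$ in the denominator that forces the leading coefficient to be exactly $1$, and this is also why the exact value $n=\lfloor 1/(x-k)\rfloor$ is required for the sharp term $q^{k+n}$, the finite (rational) and infinite (irrational) continued fractions having to be handled side by side.
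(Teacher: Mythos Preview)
Your argument is correct, but it follows a genuinely different route from the paper's. The paper does not touch the continued fraction~\eqref{QxEq2} at all: it takes the $n=1$ case from~\cite{SVRe} as a black box, pushes it to negative integers via the shift~\eqref{qtrans} to get $[x]_q=-q^{-n-1}(1+O(q))$ for $-n-1\le x<-n$, and then applies the inversion $[-1/x]_q=-1/(q[x]_q)$ from~\eqref{RecEq} to obtain $[y]_q=q^n(1+O(q))$ for $y\in[1/(n+1),1/n)$. Your approach instead reads the valuation directly off~\eqref{QxEq2} by a careful count of the lowest-degree terms in the denominator. What you gain is an explicit identification of the sharp gap length with the first partial quotient $a_2$ (or $a_2-1$ in the terminating case); what the paper's argument gains is brevity and a cleaner use of the $\PSL(2,\Z)$-equivariance that underpins the whole theory.

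Two small points are worth tightening. First, your claim $[w]_q=1+O(q)$ is exactly the $n=1$ gap statement for $w\ge 1$; you derive it from~\eqref{qtrans} together with the fact that $[\{w\}]_q$ has no negative powers of~$q$, but that fact is itself essentially the $k=0,\,n=1$ case. It would be cleaner to cite the $n=1$ case of~\cite{SVRe} here, as the paper does, rather than appeal informally to stabilization. Second, your parenthetical identification of the sharp $n$ with $\lfloor 1/(x-k)\rfloor$ is off by one precisely in the terminating case $y=1/a_2$ (where $\lfloor 1/y\rfloor=a_2$ but the maximal $m$ with $y<1/m$ is $a_2-1$); your surrounding prose handles the two cases correctly, so this is only a slip in the summary formula.
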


\begin{proof}
The case $n=1$ was proved in~\cite{SVRe} (see Theorem~2).
Using~\eqref{qtrans}, this implies that, if $-n-1\leq{}x<-n$, then~$\left[x\right]_q$ is of the form
$$
\left[x\right]_q=-q^{-n-1}\left(1+\alpha_{1}\,q+\a_{2}\,q^{2}+\cdots\right)
$$
with $\a_{i}$ integers.
Applying the second equation in~\eqref{RecEq}, we conclude that
if $0\leq{}x<\frac{1}{n}$, then 
\begin{equation*}
\left[x\right]_q=q^n\left(1+\b_{1}\,q+\b_{2}\,q^{2}+\cdots\right)
\end{equation*}
with $\b_{i}$ integers.
The result follows applying again~\eqref{qtrans}.
\end{proof}

One can observe the gaps of length $1$ and $2$ in the examples~\eqref{GoEq} and
the series~$[\d]_q$ after~\eqref{SilverEq}, respectively.

The gap theorem will be important for the sequel. In particular, it implies that the expression
\begin{equation}\label{defSiq}
\si_q(x):=\frac{\left[x-k\right]_q}{q}=\frac{\left[x\right]_q-[k]_q}{q^{k+1}}
\quad(\text{when }k≤x<k+1)
\end{equation}
is  still a power series.
It turns out that these ``shifted series'' have better continued fraction expansions; formulas \eqref{GoldProp} and \eqref{GoldDvaProp} 
give two such examples, others will be obtained in Section~\ref{XXX}. 

\subsection{The metallic irrationals}

Our main examples of $q$-reals are $q$-deformations of the ``metallic numbers''~$y_k$ defined in~\eqref{MetallY}.

\begin{prop}
\label{MetRelProp}
For any integer $n$, let $y_n=\frac{n+\sqrt{n^2+4}}{2}=[n,n,n,\ldots]$ be the $n$-th metallic number.
Its $q$-deformation $[y_n]_q$ is characterized by the following functional equation
\begin{equation}
\label{RelY}
q\left[y_n\right]_q^2+\left((1+q^n)(1-q) - q\left[n\right]_q\right)\left[y_n\right]_q=1.
\end{equation}
\end{prop}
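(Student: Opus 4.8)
The plan is to exploit the $\PSL(2,\Z)$-invariance in the guise of the regular continued fraction expansion~\eqref{QxEq2}. Writing $Y:=[y_n]_q$ and feeding the purely periodic expansion $y_n=[n,n,n,\dots]$ into~\eqref{QxEq2}, every partial quotient equals $n$, so the expansion alternates the factors $[n]_q$ and $[n]_{q^{-1}}$ with numerators $q^{n}$ and $q^{-n}$. The reason two levels appear is that $y_n$ is a fixed point of $x\mapsto n+1/x$, a Möbius map of determinant $-1$; its second iterate lies in $\SL(2,\Z)$, which is exactly why the series-level period is $2$ and why the two parameters $q$ and $q^{-1}$ alternate. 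Peeling off these two levels, the tail of the continued fraction reproduces $Y$ itself, and invoking the convergence/stabilization phenomenon established in~\cite{SVRe} this yields the self-referential identity
\[
Y \;=\; [n]_q+\cfrac{q^{n}}{[n]_{q^{-1}}+\cfrac{q^{-n}}{Y}}.
\]

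From here everything is algebraic. First I would clear the two denominators, which turns the identity into a quadratic relation in $Y$ with coefficients in $\Z[q,q^{-1}]$, namely $[n]_{q^{-1}}Y^2+\bigl(q^{-n}-q^{n}-[n]_q[n]_{q^{-1}}\bigr)Y-q^{-n}[n]_q=0$. Then I would substitute $[n]_{q^{-1}}=q^{1-n}[n]_q$, clear the negative powers of $q$, and use the elementary identity $(1-q)[n]_q=1-q^{n}$ coming from~\eqref{EGEq} to rewrite the scalar term $1-q^{2n}=(1-q)(1+q^{n})[n]_q$. A short computation shows that every surviving term then carries a factor of $[n]_q$; since $[n]_q$ is a unit in $\Z[[q]]$ for $n\ge 1$ (its constant term is $1$), I may divide through by $[n]_q$, and the result is exactly~\eqref{RelY}. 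I expect this simplification to be routine once the substitution $[n]_{q^{-1}}=q^{1-n}[n]_q$ is made; the only thing to watch is the bookkeeping of the powers $q^{\pm n}$.

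To justify the word \emph{characterized} I would add the converse. Equation~\eqref{RelY} is a quadratic $qY^2+\bigl((1+q^n)(1-q)-q[n]_q\bigr)Y-1=0$ whose two Laurent-series roots have product $-1/q$, so exactly one of them lies in $1+q\Z[[q]]$ while the other begins at order $q^{-1}$. By the gap theorem (Theorem~\ref{GapThm}) the series $[y_n]_q$ has constant term $1$, hence coincides with that distinguished root, which proves uniqueness. The main obstacle is not the algebra but the folding step: one must be certain that the infinite periodic continued fraction~\eqref{QxEq2} genuinely converges to a formal power series and that its order-two tail really equals $Y$ rather than a formally similar but distinct object. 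This is precisely the stabilization phenomenon of~\cite{SVRe}, which I would cite; alternatively one can bypass it by writing the period-two map as a word in $T_q,S_q$ and applying the invariance $[M\cdot y_n]_q=\rho(M)\cdot[y_n]_q$ directly, at the cost of computing $\rho(M)$ explicitly.
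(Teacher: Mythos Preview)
Your proof is correct and follows essentially the same route as the paper: both start from the self-referential identity $Y=[n]_q+\dfrac{q^n}{[n]_{q^{-1}}+q^{-n}/Y}$ obtained by peeling off two levels of~\eqref{QxEq2}, then clear denominators, substitute $[n]_{q^{-1}}=q^{1-n}[n]_q$, and simplify to~\eqref{RelY}. Your treatment is in fact slightly more thorough, since you explicitly justify the word ``characterized'' via the product-of-roots argument and flag the reliance on the stabilization phenomenon, points the paper leaves implicit.
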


\begin{proof}
By definition \eqref{QxEq2}, the $q$-deformation~$[y_n]_q$ of the regular continued fraction $y_n=[n,n,n,\ldots]$ is characterized by the functional equation
\begin{eqnarray*}
[y_n]_q &=&[n]_q+\cfrac{q^n}{[n]_{q^{-1}}+\cfrac{q^{-n}}{[y_n]_q}},
\end{eqnarray*}
and therefore
\begin{eqnarray*}
[y_n]_q&=&
\frac{[n]_q[y_n]_q[n]_{q^{-1}}+q^{-n}[n]_q+q^n[y_n]_q}{[y_n]_q[n]_{q^{-1}}+q^{-n}}.
\end{eqnarray*}
Since $[n]_{q^{-1}}=q^{1-n}[n]_q$, after some simplification we obtain~\eqref{RelY}.
\end{proof}

\begin{cor} The $n$-th $q$-metallic number has the explicit expression
\begin{equation}\label{ExprY}
[y_n]_q=\frac{1}{2q}\left(q[n]_q+(q^n+1)(q-1)+\sqrt{\left(q[n]_q+(q^n+1)(q-1)\right)^2+4q}\right)
\end{equation}
and its power series expansion around $q=0$ is of the form
\begin{equation}\label{GapY}
[y_n]_q=1+q+\cdots+q^{n-1}+q^{2n}+\sum_{i=2n+1}^{\infty}\kappa_i q^i
\end{equation}
with coefficients $\kappa_i\in\Z_{≥0}$ (notice the gap of length $n$ in the powers of $q$).
\end{cor}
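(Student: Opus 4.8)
The plan is to derive both assertions from the quadratic relation~\eqref{RelY} of Proposition~\ref{MetRelProp} together with the gap theorem (Theorem~\ref{GapThm}).

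\textbf{The explicit expression \eqref{ExprY}.} I would read~\eqref{RelY} as the quadratic $q\,Y^2+B\,Y-1=0$ in the single unknown $Y=[y_n]_q$, with linear coefficient $B=(1+q^n)(1-q)-q[n]_q$. The quadratic formula gives $Y=\frac{-B\pm\sqrt{B^2+4q}}{2q}$. A one-line simplification shows $-B=(1+q^n)(q-1)+q[n]_q=q[n]_q+(q^n+1)(q-1)$, which is exactly the polynomial occurring both in front of and under the root in~\eqref{ExprY}; since $B^2=(-B)^2$, the radicand is $(q[n]_q+(q^n+1)(q-1))^2+4q$. It remains to fix the sign. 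At $q=0$ one has $B=1$, hence $B^2+4q=1+O(q)$ has a unique power-series square root equal to $1+O(q)$; with the plus sign the numerator $-B+\sqrt{B^2+4q}$ vanishes at $q=0$ and is therefore divisible by $2q$, producing a genuine element of $\Z[[q]]$, whereas the minus sign yields a Laurent series $-1/q+O(1)$ with a pole. As $[y_n]_q$ is by construction a power series, the plus sign is forced, which is~\eqref{ExprY}.

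\textbf{The gap \eqref{GapY}.} Here I would not expand the radical but invoke Theorem~\ref{GapThm}. From the defining identity $y_n=n+1/y_n$ and the elementary bounds $n<y_n<n+1$ (immediate from $y_n=\frac{n+\sqrt{n^2+4}}{2}$ and $n<\sqrt{n^2+4}<n+2$), I get $y_n-n=1/y_n$, and hence
$$n+\frac{1}{n+1}<y_n<n+\frac{1}{n}.$$
Thus $y_n$ lies in $[\,n,n+\tfrac1n)$ but not in $[\,n,n+\tfrac{1}{n+1})$, so $n$ is the \emph{sharp} gap parameter. Feeding $k=n$ and gap parameter $n$ into Theorem~\ref{GapThm} gives $[y_n]_q=[n]_q+q^{2n}+\kappa_{2n+1}q^{2n+1}+\cdots$, which is precisely~\eqref{GapY}: the terms in degrees $n,\dots,2n-1$ vanish and the coefficient of $q^{2n}$ equals $1$, so the gap has length exactly $n$. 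The integrality of the $\kappa_i$ is part of the conclusion of Theorem~\ref{GapThm}.

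\textbf{Main obstacle.} The formula~\eqref{ExprY} is routine algebra; the only delicate point is the gap claim. There one must check not merely that the gap has length at least $n$, but that it has length \emph{exactly} $n$ with leading coefficient $1$, and this is exactly why both inequalities are required: $y_n<n+\tfrac1n$ licenses the application of Theorem~\ref{GapThm} with parameter $n$, while $y_n>n+\tfrac{1}{n+1}$ guarantees that $n$ is the sharp value and forbids a longer gap. The single nontrivial estimate behind this is the upper bound $y_n<n+1$, equivalently $\sqrt{n^2+4}<n+2$. As a consistency check one may instead expand $\sqrt{(q[n]_q+(q^n+1)(q-1))^2+4q}$ from~\eqref{ExprY} and divide by $2q$; this reproduces the same leading behaviour but is considerably more computational.
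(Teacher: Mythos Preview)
Your argument is essentially the paper's own: the closed form comes from solving the quadratic~\eqref{RelY}, and the shape of the expansion from Theorem~\ref{GapThm} together with $n\le y_n<n+\tfrac1n$. You are in fact more careful than the paper on two points---you justify the choice of sign, and you verify the lower bound $y_n>n+\tfrac{1}{n+1}$ so that the gap has length exactly~$n$---whereas the paper's proof records only the upper bound. One small caveat that neither you nor the paper addresses: the statement asserts $\kappa_i\in\Z_{\ge0}$, but Theorem~\ref{GapThm} only gives $\kappa_i\in\Z$, and already for $n=1$ the series~\eqref{GoEq} has $\kappa_3=-1$; so the nonnegativity claim is a misprint for~$\Z$ rather than something to be proved.
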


\begin{proof}
The formula for $[y_n]_q$ (appearing first in~\cite{LMG}) is an immediate consequence of~\eqref{RelY}, while the series expansion follows from the gap theorem (Theorem~\ref{GapThm}) since one has $n\leq y_n<n+\frac{1}{n}$ for all $n$.
\end{proof}

\begin{ex}
\label{MainEx}
(a)
The first metallic number is the golden ratio $\varphi=y_1$, already discussed in the introduction.
The generating function $G(q)=[y_1]_q$ of the $q$-deformed golden ratio
is given by~\eqref{GGF} and expands as the power series \eqref{GoEq}.
It satisfies (and is characterized by) the  functional equation
\begin{equation}
\label{GREq}
q\,G(q)^2+
\left(1-q-q^2 \right)G(q) =1,
\end{equation}
(see also~\cite{SVRe}).
Recalling that $G^{(1)}(q)$, $G^{(2)}(q)$, and $G^{(3)}(q)$ denote the generating functions of the shifted series~\eqref{NotGold},
we also have
\begin{equation}
\label{GRBisEq}
\begin{array}{rcl}
q^2G^{(1)}(q)^2 + \left(1+q-q^2 \right)G^{(1)}(q) &=&q,\\[4pt]
q^3G^{(2)}(q)^2 + \left(1+q-q^2 \right)G^{(2)}(q) &=&1,\\[4pt]
q^4 G^{(3)}(q)^2+\left(1+q-q^2+2q^3\right)G^{(3)}(q)&=&-1+q-q^2.
\end{array}
\end{equation}

(b)
The second example is the silver ratio~$y_2$.
The generating function~$S(q)=[y_2]_q$ of its $q$-deformation is as in~\eqref{SilverGFEq}, it expands into the power series \eqref{SilverSerEq} and it satisfies the following functional equation
\begin{equation}
\label{SREq}
q\,S(q)^2+
\left(1-2q-q^3\right)S(q) =1.
\end{equation}
Let~$S^{(1)}(q),S^{(2)}(q),S^{(3)}(q)$, and~$S^{(4)}(q)$ be the generating functions of the shifted series~\eqref{NotSilver}.
It easily follows from~\eqref{SREq} that these functions
satisfy
\begin{equation}
\label{ShSRFEq}
\begin{array}{rcl}
q^2\,S^{(1)}(q)^2+ \left(1-q^3\right)S^{(1)}(q) &=& 1+q^2,\\[4pt]
q^3\,S^{(2)}(q)^2+ \left(1+2q^2-q^3\right)S^{(2)}(q) &=& q^2,\\[4pt]
q^4\,S^{(3)}(q)^2+ \left(1+2q^2-q^3\right)S^{(3)}(q) &=& q,\\[4pt]
q^5\,S^{(4)}(q)^2+ \left(1+2q^2-q^3\right)S^{(4)}(q) &=& 1.
\end{array}
\end{equation}

(c) The third example  $y_3=\frac{3+\sqrt{13}}{2}$ is sometimes called the ``bronze ratio''.
The generating function $B(q)$ of~$\left[y_3\right]_q$ is given by 
\begin{equation}
\label{BronzeGF}
B(q)=\frac{q^4+q^2+2q-1+\sqrt{(1-q+q^2)(1+q+2q^2+5q^3+2q^4+q^5+q^6)}}{2q}
\end{equation}
and the series starts as follows
\begin{eqnarray*}
B(q) &=&
1+q+q^2+q^6-q^8-2q^9+2q^{10}+4q^{11}+q^{12}-11q^{13}-7q^{14}+ 15q^{15}\\
&&+\, 34q^{16}- 17q^{17}- 83q^{18}- 38q^{19}+ 189q^{20}+ 215q^{21}- 260q^{22}+ \cdots
\end{eqnarray*}
(This sequence is not in the OEIS.) Moreover, $B(q)$ is characterized by the functional equation
$$
q\,B(q)^2+\left(1-2q-q^2-q^4\right)B(q)=1.
$$
As concerns the shifted series $[y_3-1]_q=\left[\frac{1+\sqrt{13}}{2}\right]_q$, its generating function $B^{(1)}(q):=\frac{B(q)-1}{q}$ is such that
\begin{equation}
\label{BroShift1}
q^2B^{(1)}(q)^2 + \left(1-q^2-q^4\right)B^{(1)}(q) = 1+q+q^3.
\end{equation}

(d) The next example is $y_4=\sqrt{5}+2$, sometimes called ``platinum ratio''. The generating function of $[y_4]_q=\left[\sqrt{5}+2\right]_q=q^2\left[\sqrt{5}\right]_q+q+1$ is
$$
P(q)=
\frac{q^5+q^3+q^2+2q-1+\sqrt{(1-q+q^2)(1+q+2q^2+3q^3+6q^4+3q^5+2q^6+q^7+q^8)}}{2q}.
$$
\end{ex}

Further examples can be found in~\cite{LMG}; see Example~4.5.

\section{$q$-irrationals and super $\d$-fractions} \label{HFrSec}

This section contains the material that will be necessary for the proof of our main results.
Namely, we produce various super $\delta$-fraction expansions, in the sense of~\cite{Han}, for the $q$-deformation of the metallic numbers. Some of them will be used in Section~\ref{HankSec} to compute Hankel determinants of the golden and silver ratios.

\subsection{Super $\d$-fractions and H-fractions}

A special class of generalized Jacobi fractions
was introduced and studied by G.-N.~Han in~\cite{Han}.
For every positive integer $\d$, consider the expressions
\begin{equation}
\label{HanFr}
H(q)
=
\cfrac{v_0q^{k_0}}{1+q\,U_1(q)
          - \cfrac{v_1q^{k_0+k_1+\d}}{1+q\,U_2(q)
          -\cfrac{v_2q^{k_1+k_2+\d}}{1+q\,U_3(q)
          - \cfrac{v_3q^{k_2+k_3+\d}}{\ddots
       }}}} 
\end{equation}
where $v_i\neq0$ are constants, $k_i\in\Z_{\geq0}$, and $U_i(q)$ are polynomials 
such that $\deg(U_i)\leq{}k_{i-1}+\d-2$.
These continued fractions were called in~\cite{Han} ``super $\d$-fractions''.
They include the regular C-fractions (for~$\d=1$ and~$k_i\equiv0$)
and the J-fractions (for~$\d=2$ and~$k_i\equiv0$).
One of the main results of~\cite{Han} is that for every $\d\geq1$ any power series can be expanded
as a unique super $\d$-fraction.

In the special case where $\d=2$, the continued fractions~\eqref{HanFr}
were called {\it H-fractions} and applied to computation of Hankel determinants. In particular, 
Theorem 2.1 of~\cite{Han} states the following.
Introduce the notation
\begin{equation}
\label{sEq}
s_n:=\sum_{i=0}^{n-1}k_i+n,
\qquad
\e_n:=\sum_{i=0}^{n-1}\frac{k_i\left(k_i+1\right)}{2},
\qquad \text{for }n\geq 1.
\end{equation}
Then 
\begin{equation}
\label{HanHanEq}
\left\{\begin{array}{rcl}
\Delta_{s_n}\left(H(q)\right)
&=&(-1)^{\e_n}v_0^{s_n}v_1^{s_n-s_1}v_2^{s_n-s_2}\cdots{}v_{n-1}^{s_n-s_{n-1}},  \\[4pt]
\Delta_m\left(H(q)\right)&=&0 \quad\text{if }m\notin\{s_n,\ n\geq 1\}.
\end{array}
\right.
\end{equation}
This theorem is a powerful tool that we will use systematically.
Note  that~\cite{Han} also contains an efficient  algorithm for producing H-fractions
for power series with generating functions satisfying quadratic functional equations.
This algorithm can be applied to our examples, but, for the golden and silver $q$-numbers we give direct simple proofs.
We also refer to~\cite{HanE} for a long history of this statement
that was independently proved in different forms and with different generality 
by several authors; see, e.g.~\cite{Bus}.

The proof of~\eqref{HanHanEq} is based on the following beautiful lemma
 that we will often use directly.

\begin{lem}[Lemma 2.2 of~\cite{Han}]
\label{Han2Lem}
Let $k$ be a nonnegative integer and let $F(q), G(q)$ be two power series such that
$$
F(q)=\frac{q^k}{1+q\,U(q) - q^{k+2}\,G(q)}
$$
where $U(q)$ is a polynomial of $\deg(U)\leq{}k$. Then,
$\D_n(F ) = (-1)^{\frac{k(k+1)}{2}}\,\D_{n-k-1}(G).$
\end{lem}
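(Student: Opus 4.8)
The plan is to read the defining relation as the single identity $F(q)\,D(q)=q^k$, with $D(q):=1+q\,U(q)-q^{k+2}G(q)$, and to exploit it by row operations on the Hankel matrix. Writing $D(q)=\sum_{m\ge0}d_mq^m$, one has $d_0=1$, the $d_j$ for $1\le j\le k+1$ are the coefficients of $qU$, and crucially $d_m=-g_{m-k-2}$ for $m\ge k+2$ (this comes from $q^{k+2}G=1+qU-D$, whose right-hand side has no terms of degree $\ge k+2$ besides $-D$). I would first extract from $FD=q^k$ the coefficientwise consequences $f_0=\dots=f_{k-1}=0$ and $f_k=1$: thus $F$ begins at order $q^k$, which is exactly what creates room for the index shift by $k+1$ in the Hankel determinant.

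The main device is the $n\times n$ unipotent lower-triangular Toeplitz matrix $L=(d_{a-i})_{0\le a,i\le n-1}$ attached to $D$. Since $\det L=1$, we have $\D_n(F)=\det\!\big(L\,H_n(F)\big)$, where $H_n(F)=(f_{a+b})$. Computing $(LH_n)_{ab}=\sum_{c=b}^{a+b}d_{a+b-c}f_c$ and using $FD=q^k$, I rewrite this as $(LH_n)_{ab}=\delta_{a+b,k}-\sum_{c=0}^{b-1}d_{a+b-c}f_c$. Because $f_c=0$ for $c<k$, the subtracted sum is empty whenever $b\le k$, so for every column $b\le k$ the column of $LH_n$ equals $\delta_{a+b,k}$, i.e.\ the standard vector $e_{k-b}$. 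Hence $LH_n$ is block upper-triangular: its top-left $(k+1)\times(k+1)$ block is the anti-diagonal reversal matrix, of determinant $(-1)^{k(k+1)/2}$, its bottom-left block vanishes, and therefore $\D_n(F)=(-1)^{k(k+1)/2}\det C$, where $C$ is the bottom-right block $\big((LH_n)_{ab}\big)_{k+1\le a,b\le n-1}$.

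It remains to identify $C$. For $a,b\ge k+1$ the term $\delta_{a+b,k}$ drops, and every index $a+b-c$ with $c\le b-1$ is $\ge k+2$, so substituting $d_{a+b-c}=-g_{a+b-c-k-2}$ turns the entry into a convolution $(LH_n)_{ab}=\sum_{c=k}^{b-1}f_c\,g_{a+b-k-2-c}$. Reindexing by $a'=a-k-1$, $b'=b-k-1$, $c=k+i'$ and setting $\hat f_{i'}:=f_{k+i'}$ (so $\hat f_0=f_k=1$) gives $C_{a'b'}=\sum_{i'=0}^{b'}\hat f_{i'}\,g_{a'+b'-i'}$, which is precisely the product $H_{n-k-1}(G)\,\hat L^{\top}$, where $\hat L$ is the unipotent lower-triangular Toeplitz matrix of the series $\hat F=F/q^k=1/D$. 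Since $\det\hat L=1$, we obtain $\det C=\D_{n-k-1}(G)$, and combining with the reversal sign yields $\D_n(F)=(-1)^{k(k+1)/2}\D_{n-k-1}(G)$.

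The step I expect to be the real obstacle is this identification of $C$. The tempting symmetric move of conjugating on both sides, $L\,H_n(F)\,L^{\top}$, does \emph{not} work cleanly: truncation (boundary) effects spoil the Hankel form of the bottom-right block and introduce a spurious factor $(-1)^{n-k-1}$, as one already sees on small cases. The one-sided reduction avoids this, but then $C$ is genuinely non-Hankel, and the point is to recognize it as $H_{n-k-1}(G)$ times a unimodular triangular factor; this hinges on the two facts $f_c=0$ $(c<k)$ and $d_m=-g_{m-k-2}$ $(m\ge k+2)$ conspiring so that the convolution truncates to exactly $H_{n-k-1}(G)\,\hat L^{\top}$. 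I would run the argument for $n\ge k+1$, noting $\D_{n-k-1}(G)=1$ when $n=k+1$, and dispose of the degenerate range $1\le n\le k$ separately, where the vanishing $f_0=\dots=f_{k-1}=0$ forces $\D_n(F)=0$, consistent with the convention $\D_{n-k-1}(G):=0$.
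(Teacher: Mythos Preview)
Your argument is correct. The paper does not supply its own proof of this lemma; it is quoted verbatim as Lemma~2.2 of~\cite{Han} and used as a black box throughout Section~\ref{HankSec}. So there is no ``paper's proof'' to compare against here.

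Your proof is essentially the standard one (and matches Han's original). The key steps---multiplying $H_n(F)$ on the left by the unipotent Toeplitz matrix $L$ of the reciprocal series $D=q^k/F$, reading off the anti-diagonal block in the first $k{+}1$ columns from $f_0=\dots=f_{k-1}=0$ and $f_k=1$, and then recognising the residual block $C$ as $H_{n-k-1}(G)\hat L^{\top}$ via the identity $d_m=-g_{m-k-2}$ for $m\ge k{+}2$---are all sound and carried out carefully. Your remark that the symmetric conjugation $L\,H_n(F)\,L^{\top}$ fails because of boundary truncation is a good diagnostic, and the one-sided reduction is indeed the clean way. The edge cases are handled correctly: for $1\le n\le k$ the first column of $H_n(F)$ vanishes identically, so $\D_n(F)=0$; for $n=k{+}1$ the block $C$ is empty and one recovers $\D_{k+1}(F)=(-1)^{k(k+1)/2}=(-1)^{k(k+1)/2}\D_0(G)$.
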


\subsection{Super $\d$-fractions of metallic $q$-numbers} \label{XXX}

For convenience, we will use the following notation:
\begin{equation}\label{defnq}
\begin{array}{rcl}
\langle{}n\rangle_q
&:=&
q[n]_q+(1+q^n)(1-q)\\[6pt]
&=&
\left\{
\begin{array}{ll}
1+q^2+q^3+\cdots+q^{n-1}+2q^n-q^{n+1} &\text{ if } n\geq 3,\\[4pt]
1+2q^2-q^3&\text{ if } n=2,\\[4pt]
1+q-q^2 &\text{ if } n=1.
\end{array}
\right.
\end{array}
\end{equation}

The following statement is our most general result.
\begin{thm}
\label{FristThm}
(i)
If $y_n$ is a metallic number, i.e. $y_n=\frac{n+\sqrt{n^2+4}}{2}=[n,n,n,\ldots]$ for some integer $n$, then we have the following $1$-periodic expansions:
\begin{equation}\label{GenHFY}
[y_n]_q = [n]_q+
\cfrac{q^{2n}}{\langle{}n\rangle_q
          + \cfrac{q^{2n+1}}{\langle{}n\rangle_q
          +\cfrac{q^{2n+1}}{\ddots}}}
\end{equation} 
and
\begin{equation} \label{GenHF}
\si_q(y_n)=\cfrac{q^{n-1}}{\langle{}n\rangle_q
          + \cfrac{q^{2n+1}}{\langle{}n\rangle_q
          +\cfrac{q^{2n+1}}{\ddots}}}
\end{equation}
where $\si_q(y_n)$ is defined by \eqref{defSiq}.

(ii)
The continued fraction~\eqref{GenHF} is a super $\d$-fraction with $\d=3$.
\end{thm}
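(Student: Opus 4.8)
The plan is to derive both continued-fraction identities from the single quadratic functional equation~\eqref{RelY} of Proposition~\ref{MetRelProp} and then to identify the periodic fraction with the correct formal power series root of an auxiliary quadratic. First I would set $D:=[y_n]_q-[n]_q$ and substitute $[y_n]_q=[n]_q+D$ into~\eqref{RelY}. Expanding and using the definition $\langle{}n\rangle_q=q[n]_q+(1+q^n)(1-q)$ from~\eqref{defnq} together with the elementary identity $(1-q)[n]_q=1-q^n$, the coefficient of $D$ collapses to exactly $2q[n]_q+\bigl((1+q^n)(1-q)-q[n]_q\bigr)=\langle{}n\rangle_q$, and the constant term collapses to $(1+q^n)(1-q)[n]_q-1=(1+q^n)(1-q^n)-1=-q^{2n}$. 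Hence $D$ satisfies
\begin{equation*}
qD^2+\langle{}n\rangle_q\,D-q^{2n}=0 .
\end{equation*}
By the gap theorem (equivalently~\eqref{GapY}) one has $D=q^{2n}+\cdots$, so $D$ is the \emph{unique} formal power series solving this quadratic with valuation $2n$; the other root begins with $q^{-1}$ and is not a power series.

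Next I would analyze the right-hand side of~\eqref{GenHFY}. Writing the purely periodic tail as
\begin{equation*}
W=\cfrac{q^{2n+1}}{\langle{}n\rangle_q+\cfrac{q^{2n+1}}{\ddots}} ,
\end{equation*}
I note that because the numerators carry powers $q^{2n+1}$ with $2n+1\ge 3$ while each partial denominator $\langle{}n\rangle_q$ has constant term $1$, the fraction converges $(q)$-adically and $W$ is a well-defined series of valuation $2n+1$ characterized by its self-similarity $W=q^{2n+1}/(\langle{}n\rangle_q+W)$, i.e. $W^2+\langle{}n\rangle_q W-q^{2n+1}=0$. Setting $\widetilde{D}:=q^{2n}/(\langle{}n\rangle_q+W)$, eliminating $W$ (write $W=q^{2n}/\widetilde{D}-\langle{}n\rangle_q$ and substitute) gives $q\widetilde{D}^2+\langle{}n\rangle_q\widetilde{D}-q^{2n}=0$, and $\widetilde{D}=q^{2n}+\cdots$ has valuation $2n$. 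By the uniqueness above, $\widetilde{D}=D$, which is precisely~\eqref{GenHFY}. Formula~\eqref{GenHF} then follows at once by dividing: since $n\le y_n<n+1$, definition~\eqref{defSiq} gives $\si_q(y_n)=([y_n]_q-[n]_q)/q^{n+1}=q^{2n}/\bigl(q^{n+1}(\langle{}n\rangle_q+W)\bigr)=q^{n-1}/(\langle{}n\rangle_q+W)$.

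For part (ii) I would simply read off the parameters of the super $\d$-fraction~\eqref{HanFr} from~\eqref{GenHF}, taking $\d=3$, $v_0=1$, $v_i=-1$ for $i\ge 1$ (the sign $v_i=-1$ absorbing the $+$ signs of~\eqref{GenHF} against the $-$ signs of~\eqref{HanFr}), $k_i=n-1$ for all $i\ge 0$, and $1+q\,U_i(q)=\langle{}n\rangle_q$, i.e. $U_i(q)=(\langle{}n\rangle_q-1)/q$. I would then verify the three defining conditions: the head $v_0q^{k_0}=q^{n-1}$ is correct; the exponent matching $k_{i-1}+k_i+\d=(n-1)+(n-1)+3=2n+1$ is correct; and the degree bound $\deg(U_i)\le k_{i-1}+\d-2=(n-1)+1=n$ holds. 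Indeed, from~\eqref{defnq} one checks $\deg\langle{}n\rangle_q=n+1$ in all three cases $n=1$, $n=2$, $n\ge 3$, so $\deg(U_i)=n$ and the bound holds with equality throughout. The only genuinely delicate point in the argument is the passage from formal manipulation to identification of series, namely the $(q)$-adic convergence of the continued fraction and the uniqueness of the valuation-$2n$ root; once this is secured, the remaining steps are routine algebraic identities.
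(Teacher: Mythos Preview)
Your proof is correct and follows essentially the same route as the paper: both arguments substitute $[y_n]_q=[n]_q+(\text{tail})$ into the quadratic~\eqref{RelY}, simplify to a self-similar relation, and read off the periodic continued fraction, with the same parameter check for part~(ii). The only difference is cosmetic: the paper works directly with $\si_q(y_n)$ and obtains the fixed-point relation $\si_q(y_n)=q^{n-1}/\bigl(\langle n\rangle_q+q^{n+2}\si_q(y_n)\bigr)$ in one step, whereas you work with $D=q^{n+1}\si_q(y_n)$ and the tail $W=qD$ separately before matching; your version is a bit more explicit about $(q)$-adic convergence and uniqueness of the power-series root, points the paper subsumes under ``clearly equivalent''.
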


\begin{proof}
Part~(i). 
We transform~\eqref{RelY} into a quadratic equation for $\si_q(y_n)$.
Indeed, \eqref{defSiq} reads $\left[y_n\right]_q=[n]_q +q^{n+1}\si_q(y_n)$, and inserting this equality in~\eqref{RelY} leads after some calculation to
\begin{equation*}
q^{n+2}\si_q(y_n)^2+\langle{}n\rangle_q\,\si_q(y_n)-q^{n-1}=0
\end{equation*}
so that
\begin{equation}\label{RelX}
\si_q(y_n)=\frac{q^{n-1}}{\langle{}n\rangle_q+q^{n+2}\si_q(y_n)}
\end{equation}
which is clearly equivalent to~\eqref{GenHF} and thus to~\eqref{GenHFY}.

Part~(ii).
The continued fraction~\eqref{GenHF} fits into the general formula~\eqref{HanFr} with~$\d=3$,
taking~$v_0=1$ and all other coefficients $v_i=-1$, and $k_i=n-1$ for all~$i$.
\end{proof}

\begin{ex}
For~$n=1$ formula~\eqref{GenHF} coincides with~\eqref{GoldHF},
for $n=2$ this is~\eqref{RootTwoHF}.
\end{ex}

\begin{rem}
The continued fractions~\eqref{GenHF} are very simple and $1$-periodic.
They are of type~\eqref{HanFr}, but
unfortunately they are not H-fractions, since~$\d=3$.
Therefore, the methods of~\cite{Han} to calculate the Hankel determinants cannot be applied to them.
Our next goal is to rewrite these super $\d$-fractions as H-fractions, we succeeded to do this in several cases.
\end{rem}

\subsection{H-fractions for the golden ratio}

Let us now rewrite the continued fraction expansions for the series $G(q)$ (see~\eqref{GoEq})
and its shifts,
in such a way that they become H-fractions, i.e.  super $\d$-fractions with~$\d=2$.
This will be useful for the proof of Theorem~\ref{HankelGold}. 

\begin{lem}
\label{ManuFr}
The function $G^{(2)}(q)$ has the following $3$-periodic H-fraction expansion:
\begin{equation}
\label{GoldH}
G^{(2)}(q)
\quad=\quad
\cfrac{1}{1+q
          - \cfrac{q^2}{1+q
          +\cfrac{q^{3}}{1+q-q^2
          +\cfrac{q^3}{ 1+q
          - \cfrac{q^2}{ \ddots}}}}} 
\end{equation}
\end{lem}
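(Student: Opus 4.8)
The plan is to prove the claimed H-fraction expansion \eqref{GoldH} directly from the functional equation for $G^{(2)}(q)$, exploiting the fact that a $3$-periodic super $2$-fraction is ultimately governed by a single quadratic relation. Recall from \eqref{GRBisEq} that $G^{(2)}(q)$ satisfies
\begin{equation*}
q^3 G^{(2)}(q)^2 + (1+q-q^2)\,G^{(2)}(q) = 1,
\end{equation*}
and that Theorem~\ref{FristThm} already gives the $1$-periodic super $3$-fraction \eqref{GoldHF} for the same series. The idea is that the H-fraction \eqref{GoldH}, being $3$-periodic, should ``unfold'' into exactly the data of the $1$-periodic $\delta=3$ fraction of Theorem~\ref{FristThm}; so the real content is to verify that the stated periodic pattern of partial numerators and denominators reproduces that quadratic.

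First I would denote by $H(q)$ the right-hand side of \eqref{GoldH} and introduce the tail $W(q)$ obtained after one full period of three levels, so that by the self-similar structure
\begin{equation*}
H(q) = \cfrac{1}{1+q - \cfrac{q^2}{1+q + \cfrac{q^3}{1+q-q^2 + q^3 W(q)}}},
\end{equation*}
with $W(q)=H(q)$ by $3$-periodicity. Collapsing the three nested levels is a finite linear-fractional computation: each level is a Möbius transformation in the tail variable, and composing the three gives $H$ as a single linear-fractional expression $\tfrac{a + b\,H}{c + d\,H}$ with $a,b,c,d \in \Z[q]$. Clearing denominators then yields a quadratic equation for $H(q)$ with polynomial coefficients. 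The main step is to check that this quadratic coincides, up to an overall polynomial factor and the correct choice of root (the one that is a power series with constant term $1$, by \eqref{GapY} and \eqref{defSiq}), with the defining quadratic $q^3 H^2 + (1+q-q^2)H - 1 = 0$ above.

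The step I expect to be the main obstacle is the bookkeeping in collapsing the period: the three Möbius maps have denominators $1+q$, $1+q$, and $1+q-q^2$, so the composed coefficients $a,b,c,d$ are genuine polynomials of moderate degree, and I must be careful that the resulting quadratic is not contaminated by a spurious extraneous factor introduced when clearing denominators. To control this cleanly, rather than brute-forcing the triple composition I would instead apply Lemma~\ref{Han2Lem}-style reasoning level by level, or simply verify the expansion by induction on the order of agreement: since super $\delta$-fractions are \emph{unique} for a given power series (the uniqueness statement of~\cite{Han} recalled after \eqref{HanFr}), it suffices to show that the formal continued fraction \eqref{GoldH} \emph{is} a super $2$-fraction of the prescribed form and that it satisfies the same quadratic as $G^{(2)}(q)$. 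Uniqueness then forces equality. Concretely, I read off $k_0=k_1=0$, $k_2=1$ (repeating with period $3$) and the polynomials $U_i$, check the degree constraints $\deg(U_i)\le k_{i-1}+\delta-2 = k_{i-1}$ required in \eqref{HanFr}, and confirm the partial numerators match the pattern $q^{k_{i-1}+k_i+2}$; having thus identified \eqref{GoldH} as a legitimate H-fraction whose value solves the quadratic satisfied by $G^{(2)}$, the proof concludes by uniqueness together with the matching of the leading term via the gap theorem.
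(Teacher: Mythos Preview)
Your approach is correct and is essentially what the paper does: verify that the $3$-periodic continued fraction in \eqref{GoldH}, after collapsing one full period, satisfies the quadratic $q^3 X^2+(1+q-q^2)X=1$ from \eqref{GRBisEq}, and then check that the parameters $k_i,v_i,U_i$ fit the H-fraction template \eqref{HanFr} with $\delta=2$ (the paper records exactly the $3$-periodic values $k_i=0,0,1,\dots$, $v_i=1,1,-1,-1,\dots$, $U_i=1,1,1-q,\dots$ that you identify). One small correction: the ``uniqueness'' you need at the end is not the uniqueness of super $\delta$-fractions for a given series, but the uniqueness of the power-series branch with constant term $1$ of the quadratic --- which is precisely what your final remark about matching the leading term via the gap theorem secures.
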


\begin{proof}
On the one hand, expression \eqref{GoldH} directly follows from~\eqref{GRBisEq}. On the other hand, 
in  formula~\eqref{HanFr} with $\d=2$, take the
following $3$-periodic sequences of coefficients $k_i,v_i$ and polynomials $U_i$
\begin{equation}\label{kiviUi}
\begin{array}
{rcrrrrrrrl}
k_i &=& 0, & 0, &1, &0,& 0, &1,&0, &\ldots\\
v_i &=&1, & 1, & -1, & -1,& 1, & -1, & -1, &\ldots\\
U_i &= && 1, & 1, & 1-q,& 1, & 1, & 1-q, & \ldots
\end{array}
\end{equation}
where $i=0,1,2,3,\ldots$
With this choice, the H-fraction~\eqref{HanFr} is precisely the continued fraction~\eqref{GoldH}.
\end{proof}

The proofs of the two following statements are similar to that of Lemma~\ref{ManuFr}.

\begin{lem}
One has the following continued fraction expansion
\begin{equation}
\label{GoldHNoShift}
G(q)
\quad=\quad
\cfrac{1}{1
          - \cfrac{q^2}{1+q
          +\cfrac{q^{3}}{1+q-q^2
          + \cfrac{q^3}{ 1+q
          - \cfrac{q^2}{ 1+q
          +\cfrac{q^{3}}{1+q-q^2
          +\cfrac{q^{3}}{1+q
          - \cfrac{q^2}{ 
          \ddots}}}}}}}} 
\end{equation}
which is an H-fraction, $3$-periodic starting from the third numerator.
\end{lem}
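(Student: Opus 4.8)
The plan is to derive the expansion~\eqref{GoldHNoShift} from the already-established H-fraction~\eqref{GoldH} for $G^{(2)}(q)$, rather than restart the calculation from scratch. Recall from~\eqref{NotGold} that $G(q)=1+q^2G^{(2)}(q)$, so $G(q)$ and $G^{(2)}(q)$ are related by a single application of the operation in Lemma~\ref{Han2Lem} with $k=0$. Concretely, I would first observe that
\begin{equation*}
G(q)=\cfrac{1}{1-q^2\bigl(1-G^{(2)}(q)\bigr)}
\end{equation*}
is \emph{not} quite the desired shape, so the honest route is to read off the quadratic functional equation for $G(q)$ from~\eqref{GREq} and convert it directly into a continued fraction, exactly as in the proof of Lemma~\ref{ManuFr} but with the correct leading data.

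The key steps, in order, are as follows. First I would write down the quadratic equation~\eqref{GREq}, namely $qG(q)^2+(1-q-q^2)G(q)=1$, and solve it as a self-similar fraction $G(q)=1/\bigl(1-q-q^2+\text{(tail)}\bigr)$; comparing with the known tail structure of $G^{(2)}$ shows that after the very first partial numerator $q^2$ the denominators must repeat the pattern $1+q$, $1+q-q^2$, $1+q$ of~\eqref{GoldH}. Second, I would verify that the continued fraction~\eqref{GoldHNoShift} matches the general template~\eqref{HanFr} with $\d=2$ by exhibiting the coefficient sequences: the data are identical to~\eqref{kiviUi} except for the \emph{initial} term, where the first denominator is $1$ (i.e. $U_1=0$ and $k_0=0$, $v_0=1$) rather than $1+q$. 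This is precisely what the phrase ``an H-fraction, $3$-periodic starting from the third numerator'' records. Third, I would confirm the degree constraint $\deg(U_i)\le k_{i-1}+\d-2=k_{i-1}$ holds for every $i$, which is immediate since the $U_i$ are $0$, $1$, $1$, $1-q$ with the relevant $k_{i-1}\in\{0,1\}$.

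The cleanest way to execute the first step is to note that the functional equations~\eqref{GREq} and the second line of~\eqref{GRBisEq} both have the same quadratic part $1+q-q^2$, differing only in the linear coefficient ($1-q-q^2$ versus $1+q-q^2$) and the constant. Since the periodic tail of an H-fraction of this type is governed entirely by the recurrent part of the functional equation, the two fractions share everything past the first level; only the topmost denominator $1$ (coming from the absence of the $+q$ present in~\eqref{GoldH}) reflects the sign discrepancy in the linear term. I would make this precise by substituting the claimed fraction into~\eqref{GREq} and checking the fixed-point identity, which reduces to the already-proved identity for $G^{(2)}$.

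The main obstacle I anticipate is purely bookkeeping rather than conceptual: getting the very first layer right, since the aperiodicity at the top means the clean formula~\eqref{HanHanEq} does not apply verbatim and one must track how the initial denominator $1$ (versus $1+q$) propagates. Everything downstream is governed by the $3$-periodic data~\eqref{kiviUi} and is therefore automatic once the functional equation~\eqref{GREq} is matched at the first step.
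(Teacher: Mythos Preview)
Your approach is correct and essentially the same as the paper's: the paper simply says the proof is ``similar to that of Lemma~\ref{ManuFr}'', meaning one derives the continued fraction from the functional equation~\eqref{GREq} (together with the already-known self-similarity of $G^{(2)}$ from~\eqref{GRBisEq}) and then reads off the H-fraction parameters. Your observation that the data coincide with~\eqref{kiviUi} except for $U_1=0$ in place of $U_1=1$ is exactly the content of ``$3$-periodic starting from the third numerator''; the false start with $G(q)=1/\bigl(1-q^2(1-G^{(2)})\bigr)$ is unnecessary and can be dropped.
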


\begin{lem}
One has the following $2$-periodic continued fraction expansion
\begin{equation}
\label{GoldHDShift}
G^{(3)}(q)
\quad=\quad
\cfrac{1}{1+2q
          - \cfrac{q^4}{1+q-q^2+2q^3
          -\cfrac{q^{4}}{1+2q
          -\cfrac{q^4}{ 1+q-q^2+2q^3
          - \cfrac{q^4}{ \ddots}}}}} 
\end{equation}
which is also an H-fraction.
\end{lem}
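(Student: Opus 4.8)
The plan is to follow the two-pronged strategy used for Lemma~\ref{ManuFr}, establishing separately that the displayed fraction represents $G^{(3)}(q)$ and that it is of the H-fraction type. For the first prong I would exploit the $2$-periodicity of~\eqref{GoldHDShift}. Writing $D_1(q)=1+2q$ and $D_2(q)=1+q-q^2+2q^3$ for the two repeating denominators, I introduce the tails $F_1$ (the fraction beginning with $D_1$) and $F_2$ (the fraction beginning with $D_2$), which satisfy the coupled relations
\begin{equation*}
F_1=\cfrac{1}{D_1-q^4F_2},\qquad F_2=\cfrac{1}{D_2-q^4F_1}.
\end{equation*}
Clearing denominators gives $F_1D_1-q^4F_1F_2=1$ and $F_2D_2-q^4F_1F_2=1$; subtracting yields $F_2=D_1F_1/D_2$, and substituting back produces the single quadratic $q^4D_1F_1^2-D_1D_2F_1+D_2=0$ for $F_1$.

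The mechanism that collapses this onto the functional equation for $G^{(3)}$ is the factorization identity $(1-q+q^2)(1+2q)=1+q-q^2+2q^3$, i.e.\ $D_2=(1-q+q^2)D_1$. Dividing the quadratic by $D_1$ and using this identity turns it into $q^4F_1^2+(1+q-q^2+2q^3)F_1+(1-q+q^2)=0$ (up to the overall sign fixed below), which is exactly the last line of~\eqref{GRBisEq}. Since a power series is determined by the quadratic together with its constant term, comparing with the known value $G^{(3)}(0)=-1$ (equivalently $G^{(3)}=(G^{(2)}-1)/q$) selects the correct branch and identifies $F_1$ with $G^{(3)}(q)$.

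For the second prong I would exhibit~\eqref{GoldHDShift} as a super $\d$-fraction~\eqref{HanFr} with $\d=2$. The denominators match the $2$-periodic choice $U_1(q)=2$ and $U_2(q)=1-q+2q^2$ (so that $1+qU_1=D_1$ and $1+qU_2=D_2$), while the numerators $q^4$ force the exponents $k_{i-1}+k_i+2=4$, which is solved by the $2$-periodic sequence $k_i=0,2,0,2,\ldots$; the constants $v_i$ are read off from the (uniformly negative) signs in~\eqref{GoldHDShift}, with $v_0$ fixed by the constant term as above. One then checks the admissibility constraints $\deg(U_i)\le k_{i-1}$, namely $\deg U_1=0\le0$ and $\deg U_2=2\le2$, confirming that~\eqref{GoldHDShift} is a genuine H-fraction (and hence amenable to~\eqref{HanHanEq}).

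I expect the main obstacle to lie in the first prong rather than in the bookkeeping of the second: one must verify that the quadratic obtained after eliminating $F_2$ truly coincides with~\eqref{GRBisEq}, which rests entirely on the identity $D_2=(1-q+q^2)D_1$, and one must pin down the correct root of the continued fraction from its constant term so that it matches $G^{(3)}$ rather than the conjugate solution of the quadratic.
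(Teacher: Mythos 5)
Your overall strategy is the same as the paper's (Lemma~\ref{ManuFr} applied mutatis mutandis: match the periodic fraction against the functional equation \eqref{GRBisEq}, then read off the super $\delta$-fraction data), and your second prong is correct bookkeeping: with $k_i=0,2,0,2,\dots$, $U_1=2$, $U_2=1-q+2q^2$, the constraints $\deg U_i\le k_{i-1}+\delta-2=k_{i-1}$ hold, so the fraction is a genuine H-fraction. The genuine gap is in the first prong, exactly at the sign you tried to wave away. Your elimination of $F_2$ is correct and, after dividing by $D_1=1+2q$ and using the (correct) identity $D_2=1+q-q^2+2q^3=(1-q+q^2)D_1$, it yields
\begin{equation*}
q^4F_1^2-(1+q-q^2+2q^3)\,F_1+(1-q+q^2)=0,
\end{equation*}
with a \emph{minus} sign on the linear term, whereas the last line of \eqref{GRBisEq} reads
\begin{equation*}
q^4\,G^{(3)}(q)^2+(1+q-q^2+2q^3)\,G^{(3)}(q)+(1-q+q^2)=0.
\end{equation*}
These two quadratics are related by the substitution $x\mapsto -x$, not by an overall sign, so no global sign change can reconcile them. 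Your own branch-selection test then refutes, rather than completes, the identification: setting $q=0$ in your quadratic forces $F_1(0)=1$ (as is also clear from the fraction itself, since all the $q^4$ numerators vanish at $q=0$), while \eqref{GoEq} and \eqref{NotGold} give $G^{(3)}(q)=-1+2q-4q^2+8q^3-\cdots$, so $G^{(3)}(0)=-1$. What your computation actually proves is $F_1=-G^{(3)}$.

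The conclusion to draw is that the displayed identity \eqref{GoldHDShift} is itself off by a sign as printed: its right-hand side has constant term $1$, so it cannot equal $G^{(3)}$. The correct statement has leading numerator $-1$ (i.e.\ $v_0=-1$ in \eqref{HanFr}); equivalently, the printed fraction represents $-G^{(3)}(q)=\frac{1+q^2-G(q)}{q^3}$. With that correction your argument closes with no further change: the quadratic satisfied by $-F_1$ is exactly \eqref{GRBisEq}, and the power-series branch now has the right constant term $-1$. The sign also matters downstream: in part (c) of the proof of Theorem~\ref{HankelGold}, formula \eqref{HanHanEq} applied to this H-fraction with $v_0=+1$ would produce the Hankel sequence $1,1,0,0,-1,-1,0,0,\dots$, whereas the fourth row of \eqref{HankelGoldEq} is $1,-1,0,0,-1,1,0,0,\dots$; one needs $v_0=-1$ to obtain it. So a careful write-up should state and prove the corrected identity (leading numerator $-1$) rather than the printed one.
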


\subsection{H-fractions for the silver ratio}\label{HSilverSec}
Let us do the similar work in the case of our second main example, the silver ratio~$y_2$.
We present H-fractions for the series $S(q),S^{(1)}(q)$ and~$S^{(3)}(q)$ given by~\eqref{NotSilver}. They will be used to prove Theorem~\ref{SilverHanThm}.

\begin{lem}
\label{LemSilH1}
One has the following $8$-periodic H-fraction presentation
\begin{equation}
\label{LongFrac}
S^{(1)}(q)
=
\cfrac{1}{1-
           \cfrac{q^3}{1+2q^2+
          \cfrac{q^5}{1+2q^2-q^3+
          \cfrac{q^5}{ 1+2q^2-
          \cfrac{q^3}{ 1+
          \cfrac{q^2}{1+
          \cfrac{q^2}{1+q+
          \cfrac{q^2}{1+q^2\,S^{(1)}(q)}
          }}}}}}}
\end{equation}
\end{lem}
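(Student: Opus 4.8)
The plan is to derive the continued fraction expansion directly from the functional equation satisfied by $S^{(1)}(q)$, namely the first identity in~\eqref{ShSRFEq}, by unfolding it eight times. The starting point is the quadratic equation $q^2S^{(1)}(q)^2+(1-q^3)S^{(1)}(q)=1+q^2$, which I can rewrite as a one-step self-referential continued fraction of the form $S^{(1)}(q)=\frac{1+q^2}{1-q^3+q^2S^{(1)}(q)}$. This is analogous to how \eqref{RelX} was obtained from~\eqref{RelY} in the proof of Theorem~\ref{FristThm}, and it exhibits $S^{(1)}(q)$ as a $1$-periodic super $\d$-fraction. The difficulty is that this naive form is \emph{not} an H-fraction (the gap $\d$ is wrong and the leading behaviour of the numerator is not of the shape demanded by~\eqref{HanFr}), so a direct unfolding will not produce the required structure.

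Instead, I would proceed algorithmically, following the H-fraction construction of~\cite{Han}. At each stage I have a power series whose low-order behaviour I can read off (using the gap theorem, Theorem~\ref{GapThm}, and the explicit expansion~\eqref{SilverSerEq}); I extract the appropriate factor $q^{k_i}$ and the polynomial $1+qU_i(q)$ with $\deg(U_i)\le k_i$, thereby peeling off one level of the H-fraction and leaving a new power series $G_{i+1}(q)$ governed by Lemma~\ref{Han2Lem}. Concretely, I would compute the successive remainder series and their quadratic functional equations: each peeling step transforms the quadratic relation for the current series into a quadratic relation for the next, and I track how the coefficients evolve. After carrying this out I expect the eighth remainder series to again satisfy the \emph{same} quadratic equation as $S^{(1)}(q)$, which is exactly the $8$-periodicity asserted in the lemma and is the reason the final level of~\eqref{LongFrac} closes back onto $1+q^2S^{(1)}(q)$.

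The cleanest way to make this rigorous, and the route I would actually write up, is to verify the claimed expansion rather than rediscover it. That is, I would let $R(q)$ denote the value of the finite continued fraction in~\eqref{LongFrac} regarded as a function of the tail $S^{(1)}(q)$, clear all eight denominators from the bottom up, and check that substituting the known quadratic relation $q^2(S^{(1)})^2=(1+q^2)-(1-q^3)S^{(1)}$ reduces $R(q)=S^{(1)}(q)$ to an identity. Because every numerator exponent and every correcting polynomial in~\eqref{LongFrac} is explicitly given, this is a finite rational-function computation; the eight partial numerators $q^3,q^5,q^5,q^3,q^2,q^2,q^2$ and the affine denominators $1+2q^2$, $1+2q^2-q^3$, $1+2q^2$, $1$, $1$, $1+q$ are exactly what is needed for the telescoping to collapse. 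The bookkeeping guarantees that the resulting expansion has the gap structure $\deg(U_i)\le k_i$ required by~\eqref{HanFr} with $\d=2$, so it is genuinely an H-fraction.

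The main obstacle is the combinatorial bookkeeping across the eight non-uniform levels: the period is long, the exponents alternate between $q^3$, $q^5$, $q^2$ and the correcting terms $2q^2$, $-q^3$, $+q$ are irregular, so unlike the clean $1$-periodic case of Theorem~\ref{FristThm} there is no shortcut and one must be careful that each intermediate series is a genuine power series (no negative powers appear), which is where the gap theorem is used. Once the substitution identity is verified symbolically, the H-fraction property is immediate from the degree bounds, and the expansion~\eqref{LongFrac} follows. This lemma is the silver-ratio analogue of Lemma~\ref{ManuFr}, and I would remark that the same verification strategy applies to the companion expansions for $S(q)$ and $S^{(3)}(q)$.
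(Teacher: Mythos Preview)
Your verification strategy is sound and would work: clearing denominators from the bottom of~\eqref{LongFrac} and reducing modulo the quadratic relation for $S^{(1)}$ is a finite symbolic check, and the degree bounds for the $U_i$ are then read off directly. But this is not how the paper proceeds. Rather than unfolding or verifying all eight levels at once, the paper factors the computation through the auxiliary shift $S^{(3)}(q)$: using $S^{(1)}=1+q^2S^{(3)}$ together with the functional equations~\eqref{ShSRFEq}, it first establishes the three-level identity
\[
S^{(1)}(q)=\cfrac{1}{1-\cfrac{q^3}{1+2q^2+\cfrac{q^5}{1+2q^2-q^3+q^4 S^{(3)}(q)}}}
\]
and then a separate five-level identity expressing $S^{(3)}$ back in terms of $S^{(1)}$; concatenating the two gives~\eqref{LongFrac}. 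Each piece is short enough to be checked by a one-line substitution into the relevant quadratic equation, so the bookkeeping you flag as the main obstacle essentially disappears. The payoff of the paper's decomposition is not just economy: the two intermediate identities \eqref{Sqrt2FromSiDel} and \eqref{Sqrt2FromSiDelBis} are reused later in the proof of Theorem~\ref{SilverHanThm} (parts (c) and (d), and Remark~\ref{RemLem}) to relate $\D_n^{(1)}$ and $\D_n^{(3)}$ directly. Your brute-force verification yields the lemma but not these reusable pieces.
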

\begin{proof}
The series $S^{(1)}(q)$ and $S^{(3)}(q)$ are related via the continued fractions
\begin{equation}
\label{Sqrt2FromSiDel}
S^{(1)}(q)
=
\cfrac{1}{1-
           \cfrac{q^3}{1+2q^2+
          \cfrac{q^5}{1+2q^2-q^3+q^4\,S^{(3)}(q)}}}
\end{equation}
and
\begin{equation}
\label{Sqrt2FromSiDelBis}
S^{(3)}(q)=
\cfrac{q}{1+2q^2-
           \cfrac{q^3}{1+
          \cfrac{q^2}{1+
          \cfrac{q^2}{ 1+q+
          \cfrac{q^2}{ 1+q^2\,S^{(1)}(q)}}}}}
\end{equation}
Indeed, using $S^{(1)}(q)=q^2S^{(3)}(q)+1$, formula~\eqref{Sqrt2FromSiDel} is equivalent to
$$
S^{(1)}
=
\cfrac{1}{1-
           \cfrac{q^3}{1+2q^2+
          \cfrac{q^5}{1+q^2-q^3+q^2\,S^{(1)}}}}
$$
which follows from the first equation in~\eqref{ShSRFEq};
similarly for~\eqref{Sqrt2FromSiDelBis}.
Gluing~\eqref{Sqrt2FromSiDel} and~\eqref{Sqrt2FromSiDelBis}, we obtain~\eqref{LongFrac}.

It remains only to check that it has the type \eqref{HanFr} of an H-fraction. This is easily done, by using the following $8$-periodic 
sequences of coefficients $k_i,v_i$ and polynomials $U_i$ 
\begin{equation}\label{kiviUiS}
\begin{array}
{rcrrrrrrrrrl}
k_i &=& 0, & 1, &2, &1,& 0, &0, &0, &0, &\ldots&\\
v_i &=&1, & 1, & -1, & -1,& 1, & -1, & -1, &-1, &\ldots&\\
U_i &=& & 0, & 2q, & 2q-q^2,& 2q, & 0, & 0, &1,& q& \ldots
\end{array}
\end{equation}
where $i=0,1,2,3,\ldots$
\end{proof}

\begin{lem}
\label{LemSilH0}
One has the following relation between the generating function~$S(q)$ 
and its first shift
\begin{equation}
\label{DelFromSqrt2}
S(q)
=
\cfrac{1}{1-q+
           \cfrac{q^2}{1+q+
          \cfrac{q^2}{1+q^2\,S^{(1)}(q)}}}.
\end{equation}
\end{lem}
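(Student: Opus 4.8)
The plan is to collapse the finite (three–level) continued fraction on the right-hand side of \eqref{DelFromSqrt2} into a single rational function of $S(q)$, and then to recognize the resulting identity as the quadratic functional equation \eqref{SREq}. Since the fraction terminates in $1+q^2\,S^{(1)}(q)$, the right-hand side is a genuine rational expression, so no convergence issue beyond well-definedness arises.

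First I would eliminate $S^{(1)}(q)$ in favour of $S(q)$. By \eqref{NotSilver} one has $qS^{(1)}(q)=S(q)-1$, so the innermost denominator simplifies to
$$1+q^2\,S^{(1)}(q)=1-q+qS(q).$$
Because $S(q)=1+q+\cdots$, this quantity equals $1+q^2+\cdots$, a unit in $\Z[[q]]$; likewise the two outer denominators have constant term $1$. Hence the finite continued fraction is a well-defined element of $\Z[[q]]$ and every reciprocal below makes sense.

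Next I would telescope from the inside out. Writing $u=1-q+qS(q)$, the middle level becomes
$$1+q+\frac{q^2}{u}=\frac{1+qS(q)+q^2S(q)}{u},$$
and substituting this into the outer level gives, after cancellation,
$$1-q+\cfrac{q^2}{\,1+q+\cfrac{q^2}{u}\,}=\frac{1-q+q^2-q^3+qS(q)}{1+qS(q)+q^2S(q)}.$$
Taking reciprocals, the right-hand side of \eqref{DelFromSqrt2} equals $\dfrac{1+qS(q)+q^2S(q)}{\,1-q+q^2-q^3+qS(q)\,}$.

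Finally, the asserted equality $S(q)=\dfrac{1+qS(q)+q^2S(q)}{\,1-q+q^2-q^3+qS(q)\,}$ is, after clearing the denominator, equivalent to
$$S(q)\bigl(1-q+q^2-q^3+qS(q)\bigr)=1+qS(q)+q^2S(q);$$
expanding and collecting the terms linear in $S(q)$ produces exactly the coefficient $1-2q-q^3$, so this is precisely the relation $qS(q)^2+(1-2q-q^3)S(q)-1=0$, i.e. \eqref{SREq}. Thus the identity holds. There is no genuine obstacle here: the only care needed is the bookkeeping in the telescoping step, and once \eqref{SREq} is kept in view the final reduction is forced.
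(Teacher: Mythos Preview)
Your proof is correct and follows exactly the approach indicated in the paper: substitute $S(q)=q\,S^{(1)}(q)+1$ into the innermost level, collapse the finite continued fraction, and recognize the resulting equality as the quadratic functional equation~\eqref{SREq}. The only difference is that you have written out the telescoping computation and the well-definedness check in full, whereas the paper merely states that the identity follows from~\eqref{SREq} together with $S(q)=q\,S^{(1)}(q)+1$.
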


Note that the concatenation of \eqref{DelFromSqrt2} and \eqref{LongFrac} gives an H-fraction for the $q$-deformation $S(q)$ of the silver ratio $y_2$.

\begin{proof}
This follows from~\eqref{SREq} combined with 
$S(q)=q\,S^{(1)}(q)+1$.
\end{proof}

\section{Hankel determinants of $q$-metallic numbers} \label{HankSec}

In this section, we prove Theorems~\ref{HankelGold},~\ref{RecoverThm} and~\ref{SilverHanThm}.
We then consider more examples of the series~$\left[y_n\right]_q$, and
observe experimentally that several first sequences of their Hankel determinants consist of $-1,0$, and~$1$ only.
Moreover, the number of $\{-1,0,1\}$-Hankel sequences increases, as $n$ grows.

The most fascinating property of these $\{-1,0,1\}$-Hankel sequences is that they satisfy
Somos or Gale-Robinson recurrences.
This property is easily proved for the gold and silver ratio and remains conjectural
for other metallic numbers.

\subsection{Shifted Hankel determinants} \label{ShMH}

We first establish a general formula concerning Hankel determinants of metallic numbers.

\begin{prop}
\label{HanShiftProp}
Let $y_k=[k,k,k,\ldots]$ be a metallic number. We have the following relation between the shifted Hankel determinants:
\begin{equation*}
\D_n^{(k)}\bigl(\left[y_k\right]_q\bigr)=
(-1)^{n+\frac{(k+1)(k-2)}{2}}\D_{n-k-1}^{(k+1)}\bigl(\left[y_k\right]_q\bigr).
\end{equation*}
\end{prop}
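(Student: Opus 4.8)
The plan is to deduce this relation about shifted Hankel determinants from the super $\delta$-fraction expansion of $\sigma_q(y_k)$ established in Theorem~\ref{FristThm}, using Lemma~\ref{Han2Lem} as the engine. The key observation is that $\sigma_q(y_k)$ is, up to the scalings built into the shift notation, essentially the $k$-shifted and normalized series attached to $[y_k]_q$. More precisely, by the definition \eqref{defSiq} we have $\sigma_q(y_k)=\frac{[y_k]_q-[k]_q}{q^{k+1}}$, so the coefficients of $\sigma_q(y_k)$ are, after the leading $q^{k-1}$ factor visible in \eqref{GenHF}, governed by the tail of $[y_k]_q$. First I would translate the proposition into a statement purely about the Hankel determinants of $\sigma_q(y_k)$ (or of $q^{1-k}\sigma_q(y_k)$, which is a genuine power series starting with a nonzero constant term by the gap theorem), since multiplying a series by a power of $q$ shifts which Hankel determinant $\D_n^{(\ell)}$ one is looking at but the determinant values transform in a controlled way.

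The heart of the argument is to apply Lemma~\ref{Han2Lem} to the one-step expansion of $\sigma_q(y_k)$ coming from \eqref{RelX}, namely
\begin{equation*}
\sigma_q(y_n)=\frac{q^{n-1}}{\langle n\rangle_q+q^{n+2}\sigma_q(y_n)}.
\end{equation*}
This is exactly of the form required by the lemma, $F(q)=\frac{q^{k}}{1+qU(q)-q^{k+2}G(q)}$, with $k$ replaced by $k-1$, with $1+qU(q)=\langle k\rangle_q$ (which has the right degree since $\deg U\le k-1$ as $\langle k\rangle_q$ has degree $k+1$), and with $G(q)=-\sigma_q(y_k)$. Lemma~\ref{Han2Lem} then yields $\D_n(\sigma_q(y_k))=(-1)^{\frac{(k-1)k}{2}}\D_{n-k}(-\sigma_q(y_k))=(-1)^{\frac{(k-1)k}{2}}(-1)^{n-k}\D_{n-k}(\sigma_q(y_k))$, where the extra sign $(-1)^{n-k}$ accounts for replacing $G$ by $-G$ (scaling a sequence by $-1$ multiplies the $m\times m$ Hankel determinant by $(-1)^m$). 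This is a self-similar recursion on the Hankel determinants of a single series, which is precisely the kind of internal relation the proposition asserts.

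The remaining work, which I expect to be the main obstacle, is bookkeeping: carefully converting the relation on $\D_n(\sigma_q(y_k))$ into the stated relation on $\D_n^{(k)}([y_k]_q)$ and $\D_{n-k-1}^{(k+1)}([y_k]_q)$, and matching the sign exponent $\frac{(k+1)(k-2)}{2}$ exactly. The subtle points are (i) tracking how the prefactor $q^{k-1}$ in \eqref{GenHF} relates the Hankel determinants of $\sigma_q(y_k)$ to those of the shifted series $\D^{(\ell)}_n([y_k]_q)$ for the relevant shifts $\ell=k$ and $\ell=k+1$ (a multiplication of a series by $q$ relates $\D^{(\ell)}_n$ to $\D^{(\ell+1)}_n$ up to sign, and one must iterate this the correct number of times); and (ii) confirming that the arithmetic identity $\frac{(k-1)k}{2}+(n-k)\equiv n+\frac{(k+1)(k-2)}{2}\pmod 2$ holds, which after simplification reduces to checking $\frac{(k-1)k}{2}-k\equiv\frac{(k+1)(k-2)}{2}\pmod 2$, i.e. $\frac{k^2-3k}{2}\equiv\frac{k^2-k-2}{2}\pmod 2$. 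Verifying this congruence and the shift-index arithmetic is routine but error-prone, so I would organize it as a short computation modulo $2$ at the end.

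One clean alternative that I would keep in reserve is to bypass the prefactor bookkeeping by first proving a general lemma relating $\D^{(\ell)}_n(f)$ and $\D^{(\ell)}_n(qf)$ or $\D^{(\ell+1)}_n(f)$ for any power series, and then applying it mechanically; this isolates the sign arithmetic into one reusable statement and makes the final assembly of the proposition almost automatic once Lemma~\ref{Han2Lem} has been invoked on \eqref{RelX}.
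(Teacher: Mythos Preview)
Your strategy is the paper's strategy---feed the functional relation \eqref{RelX} into Han's Lemma~\ref{Han2Lem}---but your application of the lemma is off by one factor of $q$, and this error is not cosmetic. You apply the lemma with $F=\sigma_q(y_k)$, numerator $q^{k-1}$, so the lemma's integer parameter is $k-1$. Two things go wrong. First, the degree hypothesis fails: $\langle k\rangle_q=1+qU(q)$ with $\deg U=k$ (the top term of $\langle k\rangle_q$ is $-q^{k+1}$), whereas the lemma with parameter $k-1$ demands $\deg U\le k-1$. Second, matching $-q^{(k-1)+2}G$ with $+q^{k+2}\sigma_q(y_k)$ forces $G=-q\,\sigma_q(y_k)$, not $-\sigma_q(y_k)$. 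Your proposed sign identity $\frac{k^2-3k}{2}\equiv\frac{k^2-k-2}{2}\pmod 2$ fails (the difference is $1-k$), which is a downstream symptom of these errors.

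The fix is exactly what the paper does: apply the lemma to $F=[x_k]_q:=q\,\sigma_q(y_k)$, which by \eqref{RelX} equals $\dfrac{q^{k}}{\langle k\rangle_q+q^{k+2}\sigma_q(y_k)}$. Now the parameter is $k$, the degree bound $\deg U\le k$ holds on the nose, and $G=-\sigma_q(y_k)$ genuinely. The lemma then gives
\[
\D_n\bigl([x_k]_q\bigr)=(-1)^{\frac{k(k+1)}{2}}(-1)^{n-k-1}\,\D_{n-k-1}\bigl(\sigma_q(y_k)\bigr).
\]
No further ``bookkeeping'' is needed: since $[y_k]_q=[k]_q+q^{k}[x_k]_q=[k]_q+q^{k+1}\sigma_q(y_k)$ and $[k]_q$ has degree $k-1$, one reads off directly $\D_n^{(k)}([y_k]_q)=\D_n([x_k]_q)$ and $\D_n^{(k+1)}([y_k]_q)=\D_n(\sigma_q(y_k))$. (Your claim that multiplying a series by $q$ relates $\D_n^{(\ell)}$ to $\D_n^{(\ell+1)}$ ``up to sign'' is false at $\ell=0$, and in any case unnecessary.) Finally $\frac{k(k+1)}{2}-k-1=\frac{(k+1)(k-2)}{2}$, which is the exponent in the proposition.
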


\begin{proof}
Let $x_k=y_k-k$ and $\si_q(y_k)=\frac{[x_k]_q}{q}$ as in \eqref{defSiq}. By definition, the power series $[x_k]_q$ and $\si_q(y_k)$ satisfy the relations
\begin{equation*}
[y_k]_q=[k]_q+q^k[x_k]=[k]_q+q^{k+1}\si_q(y_k)
\end{equation*}
hence $\D_n^{(k)}([y_k]_q)=\D_n([x_k]_q)$ and $\D_n^{(k+1)}([y_k]_q)=\D_n(\si_q(y_k))$. On the other hand, from~\eqref{RelX} we have
\begin{equation*}
\left[x_k\right]_q=\frac{q^k}{\langle{}k\rangle_q+q^{k+2}\si_q(y_k)}.
\end{equation*}
where, according to \eqref{defnq}, $\langle{}k\rangle_q$ is of the form $1+u(q)q$ with $u(q)$  a polynomial of degree $k$. 
Thus we can apply Lemma~\ref{Han2Lem} and get that
\begin{equation*}
\D_n(\left[x_k\right]_q)= (-1)^{k(k+1)/2}\D_{n-k-1}(-\si_q(y_k))
=(-1)^{k(k+1)/2}(-1)^{n-k-1}\D_{n-k-1}(\si_q(y_k)),
\end{equation*}
hence the result.
\end{proof}

\subsection{The case of golden ratio. Proof of Theorems~\ref{HankelGold} and~\ref{RecoverThm}} \label{MetalH}

We are ready to prove that the first Hankel determinants of the $q$-deformed golden ratio are
indeed given by~\eqref{HankelGoldEq} and that they characterize the series~\eqref{GoEq}. 

\textit{Proof of Theorem~\ref{HankelGold}}.
Until the end of the proof the Hankel determinants~$\Delta_n^{(\ell)}\left(G\right)$ will be simply denoted by $\Delta_n^{(\ell)}$ when there is no ambiguity.

It is clear that it suffices to prove Part (1) of the theorem, and here is our  strategy: 

(a) first, we shall prove that the knowledge of any of the first three sequences of determinants $\Delta=\D^{(0)},\Delta^{(1)},\Delta^{(2)}$ entails the knowledge of the two others, because there are very simple relations between them. 

Then it will be sufficient:

(b) to calculate $\Delta^{(2)}$, and

(c) to calculate $\Delta^{(3)}$.

Before we do so, a first obvious but crucial observation is that the three shifted determinants $\Delta_n^{(\ell)}$, where $\ell=1,2,3$, correspond to the non-shifted Hankel determinant $\D_n$ of a shifted variant of $G(q)$ or, equivalently, of a power series represented by one of the generating functions $G^{(\ell)}$, $\ell=1,2,3$ introduced in~\eqref{NotGold}. To make it clear:
\begin{equation*}
\D_n^{(\ell)}=\D_n(G^{(\ell)}),\quad \ell=0,1,2,3.
\end{equation*}

Now we start our proof.

(a) 
Applying Proposition~\ref{HanShiftProp} to the case of the golden ratio $\varphi=y_1$,
one obtains the relation
\begin{equation*}
\D_n(G^{(1)})=(-1)^{n-1}\D_{n-2}(G^{(2)})
\end{equation*}
or equivalently
\begin{equation*}
\D_n^{(1)}=(-1)^{n-1}\D_{n-2}^{(2)}.
\end{equation*}
Thus the second and third sequences in \eqref{HankelGoldEq} can be deduced the one from the other.
\\
Similarly, because of \eqref{TrueGoldCF} and \eqref{NotGold} we have
\begin{equation*}
G(q)=\frac{1}{1-q^2F_1(q)},\quad\text{with}\quad
F_1(q)=\frac{1}{1+q G(q)}=\frac{1}{1+q+q^2 G^{(1)}(q)}.
\end{equation*}
Applying Han's Lemma~\ref{Han2Lem} we thus obtain the relation 
$$\D_n=\D_n(G)=(-1)^n\D_n(G^{(1)})=(-1)^n \D_{n-2}^{(1)}$$
which connects the second sequence in \eqref{HankelGoldEq} with the first one.

(b) Now we determine explicitly the shifted Hankel determinants~$\Delta_n^{(2)}$. Since  the continued fraction~\eqref{GoldH} is an H-fraction by Lemma~\ref{ManuFr}, we can calculate the Hankel determinants~$\Delta_n^{(2)}$ via  formula~\eqref{HanHanEq}. 
The parameters $v_i,s_i,\e_i$  contributing in this formula can be easily deduced from~\eqref{kiviUi} and~\eqref{sEq}:
$$
\begin{array}
{rcrrrrrrrrrrl}
v_i &=&1, & 1, & -1, & -1,& 1, & -1, & -1, & -1, & -1, &\ldots\\
s_i &=& & 1, & 2, & 4,& 5, & 6, & 8, & 9, &10,&12,&\ldots\\
\e_i &=& & 0, & 0, & 1,& 1, & 1, & 2, & 2, &2,& 3,&\ldots
\end{array}
$$
where $i=0,1,2,3,\ldots$
The sequence $(s_i)$  misses the values $3+4i=3,7,11,\ldots$  which means that the Hankel determinants~$\Delta^{(2)}_{3+4i}$ vanish. On the other hand, the non-zero values of the Hankel determinants
of the sequence $\si_q(\varphi)$ are as follows:
$$
\Delta^{(2)}_0=1,\quad
\Delta^{(2)}_1=1,\quad
\Delta^{(2)}_2=1,\quad
\Delta^{(2)}_4=-1,\quad
\Delta^{(2)}_5=-1,\quad
\Delta^{(2)}_6=-1,\quad
\Delta^{(2)}_8=1,\quad
\ldots
$$
In other words we have proved that the sequence $\Delta^{(2)}$ is the one given in
the third row of~\eqref{HankelGoldEq}. Because of (a) this proves also that $\D$ and $\D^{(1)}$ are as in~\eqref{HankelGoldEq}.

(c)
The remaining case of $\Delta^{(3)}=\D(G^{(3)})$ in~\eqref{HankelGoldEq} is treated by applying formula~\eqref{HanHanEq} to the H-fraction~\eqref{GoldHDShift}.
The parameters of this H-fraction are $k_{2m}=0,\,k_{2m+1}=2$ for all~$m\geq0$, and thus
$$
s_i=1,4,5,8,9,12,13\ldots,
\qquad
i=1,2,3,\ldots
$$
are the indices of non-zero determinants. We  obtain this way the fourth row  in~\eqref{HankelGoldEq}.

Theorem~\ref{HankelGold} is proved.

\begin{rem}
\label{RemHankelGold}
An alternative proof of (b) above would be the following. According to~\eqref{GoldH}, one can write
\begin{equation*}
G^{(2)}(q)=\frac{1}{1+q-q^2F_1(q)},\quad 
F_1(q)=\frac{1}{1+q+q^2 F_2(q)},\quad 
F_2(q)=\frac{q}{1+q-q^2+q^3 G^{(2)}(q)}.
\end{equation*}
Applying Han's Lemma~\ref{Han2Lem} we thus obtain the relations 
$$\D_n(G^{(2)})=\D_{n-1}(F_1),\quad \D_n(F_1)=(-1)^{n-1}\D_{n-1}(F_2),
\quad \D_n(F_2)=(-1)^{n-1}\D_{n-2}(G^{(2)})$$
which imply the $4$-antiperiodicity: $\D_n^{(2)}=-\D_{n-4}^{(2)}$. Thus it suffices to calculate the first four determinants $\D_n^{(2)}$ with $n=0,1,2,3$ to get the complete sequence. 

Similarly, using \eqref{GoldHDShift} we easily prove the $4$-antiperiodicity of the $\D^{(3)}$ sequence and this gives another proof of (c).

Finally, let us mention that, instead of proving the validity of our formula~\eqref{HankelGoldEq} for the sequence of shifted determinants $\D^{(2)}$ as we did in (b), we could have looked instead at the sequence of non-shifted determinants $\D$, since (a) above shows that they are equivalent. The proof is quite the same and consists in applying formula~\eqref{HanHanEq} to the H-fraction~\eqref{GoldHNoShift}.
\end{rem}

\textit{Proof of Theorem~\ref{RecoverThm}}.
Let us now prove that the series $G(q)$ 
is characterized by the Hankel determinants~\eqref{HankelGoldEq}.
We proceed by induction: assume that the first $k$ coefficients, $G_0,G_1,\ldots, G_{k-1}$ of the series
$$
G(q)=\sum_{i=0}^\infty G_iq^i
$$
(see \eqref{GoEq}) are determined by~\eqref{HankelGoldEq}.
We need to prove that $G_k$ is also determined by these determinants.

Among the Hankel determinants~\eqref{HankelGoldEq}, consider those with $G_k$ in the lower right entry:
\begin{equation}
\label{HanK}
\Delta^{(\ell)}_n(G)=
\left|
\begin{array}{rl}
\Delta^{(\ell)}_{n-1}&\vdots
\\[2pt]
\cdots&G_k
\end{array}
\right|,
\end{equation}
where $\ell=0,1,2,3$ and $\ell+2n-2=k$ (see~\eqref{HankDet}).
For every $G_k$ there are exactly two such Hankel determinants:
if $k$ is even, these are $\Delta_{\frac{k}{2}+1}$ and $\Delta^{(2)}_{\frac{k}{2}}$;
if $k$ is odd, these are $\Delta^{(1)}_{\frac{k+1}{2}}$ and $\Delta^{(3)}_{\frac{k-1}{2}}$.

It is easy to see that, according to Theorem~\ref{HankelGold} (see~\eqref{HankelGoldEq}), 
at least one of the determinants $\Delta^{(\ell)}_{n-1}$ in~\eqref{HanK}
 is different from~$0$.
Indeed, for every~$k$, either $\Delta_{\frac{k}{2}+1}\neq0$, or $\Delta^{(2)}_{\frac{k}{2}}\neq0$,
and  either $\Delta^{(1)}_{\frac{k+1}{2}}\neq0$, or $\Delta^{(3)}_{\frac{k-1}{2}}\neq0$.
Since the row and the column containing~$G_k$ in~\eqref{HanK} consist of coefficients
$G_i$ with $i<k$, which are known by induction hypothesis, we conclude that
the value of~\eqref{HanK} determines $G_k$, provided $\Delta^{(\ell)}_{n-1}$ is different from~$0$.

Theorem~\ref{RecoverThm} is proved.

\subsection{The case of silver ratio. Proof of Theorem~\ref{SilverHanThm}}\label{ProofSilerSec}

For short, we will use the notation $\Delta_n^{(\ell)}$ instead of $\Delta_n^{(\ell)}\left(S\right)$ until the end of the proof, so that
\begin{equation*}
\D_n^{(\ell)}=\D_n(S^{(\ell)}),\quad \ell=0,1,2,3,4,
\end{equation*}
with $S^{(\ell)}$ defined in~\eqref{NotSilver}. Let us prove Part (1) of the theorem, which immediately implies Part (2).

(a)
Consider first the shifted series~$S^{(1)}(q)$.
Applying~\eqref{HanHanEq} to the continued fraction of
Lemma~\ref{LemSilH1}, we obtain the second formula in~\eqref{DelHan}.
Indeed, the parameters $v_i,s_i,\e_i$  contributing in this formula can be easily deduced from~\eqref{kiviUiS} and~\eqref{sEq}:
$$
\begin{array}
{rcrrrrrrrrrl}
v_i &=&1, & 1, & -1, & -1,& 1, & -1, & -1, & -1, & -1, &\ldots\\
s_i &=& & 1, & 3, & 6,& 8, & 9, & 10, & 11, &12,&\ldots\\
\e_i &=& & 0, & 1, & 4,& 5, & 5, & 5, & 5, &5,&\ldots
\end{array}
$$
where $i=0,1,2,3,\ldots$
The Hankel sequence $\D^{(1)}_n$ is then given by~\eqref{HanHanEq}.

(b)
Now, Lemma~\ref{LemSilH0} allows us to calculate the Hankel determinants~$\D_n$.
Indeed, using auxillary functions~$F_1(q)$ and~$F_2(q)$, formula~\eqref{DelFromSqrt2} reads
$$
S(q)=\frac{1}{1-q+q^2\,F_1(q)},
\qquad
F_1(q)=\frac{1}{1+q+q^2\,F_2(q)},
\qquad
F_2(q)=\frac{1}{1+q^2\,S^{(1)}(q)}.
$$
Applying then Han's Lemma~\ref{Han2Lem}, we have
$$
\D_n=(-1)^{n-1}\D_{n-1}(F_1),
\qquad
\D_n(F_1)=(-1)^{n-1}\D_{n-1}(F_2),
\qquad
\D_n(F_2)=(-1)^{n-1}\D^{(1)}_{n-1}.
$$
Hence, we conclude $\D_n=(-1)^n\D^{(1)}_{n-3}$, in accordance with~\eqref{DelHan}.

(c)
Applying~\eqref{Sqrt2FromSiDel}, we are able to calculate the determinants~$\D^{(3)}_n$.
Once again, using auxillary functions~$F_1(q)$ and~$F_2(q)$,
formula~\eqref{Sqrt2FromSiDel} then gives
$$
S^{(1)}(q)=\frac{1}{1-q^2\,F_1(q)},
\qquad
F_1(q)=\frac{q}{1+2q^2+q^3\,F_2(q)},
\qquad
F_2(q)=\frac{q^2}{1+2q^2-q^3+q^4\,S^{(3)}(q)}.
$$
From Han's Lemma, we have
$$
\D^{(1)}_n=\D_n(F_1),
\qquad
\D_n(F_1)=(-1)^{n-1}\D_{n-2}(F_2),
\qquad
\D_n(F_2)=(-1)^{n}\D^{(3)}_{n-3},
$$
and therefore $\D^{(1)}_n=-\D^{(3)}_{n-6}$ confirming the fourth row of~\eqref{DelHan}.

(d)
Finally, we apply Proposition~\ref{HanShiftProp} to deduce the row~$\D^{(2)}_n$ in~\eqref{DelHan} from~$\D^{(3)}_n$:
\begin{equation*}
\D_n^{(2)}=(-1)^{n}\D^{(3)}_{n-3}.
\end{equation*}

Theorem~\ref{SilverHanThm} is proved.

\begin{rem}
\label{RemLem}
As for Theorem~\ref{HankelGold}, let us mention that there are alternative proofs of Theorem~\ref{SilverHanThm}. 
For instance,~\ref{Sqrt2FromSiDel} and~\ref{Sqrt2FromSiDelBis} connect the determinants $\D^{(3)}_n$ with $\D^{(1)}_n$.
Indeed, using the same method as in (b) and (c), we have
$\D^{(3)}_n=-\D^{(1)}_{n-6}$.
Combining this with the conclusion (c), one proves the $12$-periodicity of
the Hankel sequence $\D^{(3)}_n$, and thus of $\D_n,\D^{(1)}_n$, and $\D^{(2)}_n$.
Then to prove Theorem~\ref{SilverHanThm} it suffices to calculate the first $11$ values of (one of) the sequences.
\end{rem}

\subsection{The case of $y_3$ and $y_4$: more rows of $-1,0,1$}\label{Metal3Sec}
Consider the next example (after~$\d$)  of metallic number $y_3=\frac{3+\sqrt{13}}{2}$.
Computer assisted proofs, in particular, application of Algorithms
of~\cite{Han}, show that the phenomenons already observed for~$\varphi$ and~$\d$
become even more amazing:
the number of Hankel sequences consisting of~$-1,0,1$ increases.

\begin{fact}
The first five sequences of Hankel determinants~$\Delta^{(\ell)}_n$ associated with
the series~$\left[\frac{3+\sqrt{13}}{2}\right]_q$ generated by the function~\eqref{BronzeGF},
consist of~$-1,0$ and~$1$ only.
These sequences are $24$-antiperiodic
$$
\Delta^{(\ell)}_{n+24}=-\Delta^{(\ell)}_n,
\qquad
\ell=0,1,2,3,4,
$$
with the following (anti)periods
\begin{equation}
\label{Hank3Eq}
\begin{array}{rcl}
\Delta_n&=&1,1,0,-1,-1,1,1,0,-1,-1,0,0,1,0,0,0,1,0,0,-1,-1,0,1,1,\ldots\\[4pt]
\Delta^{(1)}_n&=&1,1,-1,0,1,-1,0,0,-1,0,0,0,-1,0,0,-1,1,0,-1,1,1,-1,0,1,\ldots\\[4pt] 
\Delta^{(2)}_n&=&1,1,0,0,-1,0,0,0,-1,0,0,1,1,0,-1,-1,1,1,0,-1,-1,1,1,0,\ldots\\[4pt]
\Delta^{(3)}_n&=&1,0,0,0,1,0,0,1,-1,0,1,-1,-1,1,0,-1,1,1,-1,0,1,-1,0,0,\ldots\\[4pt]
\Delta^{(4)}_n&=&1,0,0,-1,-1,0,1,1,-1,-1,0,1,1,-1,-1,0,1,1,0,0,-1,0,0,0,\ldots
\end{array}
\end{equation}
\end{fact}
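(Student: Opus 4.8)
The plan is to reproduce, for the bronze ratio $y_3$, the argument that proved Theorems~\ref{HankelGold} and~\ref{SilverHanThm}, the only genuinely new input being explicit \emph{periodic} H-fraction expansions for the shifted series attached to $B(q)$. Write $B^{(\ell)}$ for the appropriate coefficient shift of $B$ (generalizing~\eqref{BroShift1}), so that $\D^{(\ell)}_n=\D_n(B^{(\ell)})$ for $\ell=0,1,2,3,4$; here the gap of length $3$ in the expansion of~\eqref{BronzeGF}, guaranteed by the gap theorem (Theorem~\ref{GapThm}, since $3\le y_3<3+\tfrac13$), is what makes $B^{(3)}$ and $B^{(4)}$ honest power series. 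First I would derive, from the functional equation for $B(q)$ displayed after~\eqref{BronzeGF} together with the sample~\eqref{BroShift1}, the quadratic functional equation satisfied by each $B^{(\ell)}$, exactly as~\eqref{GRBisEq} and~\eqref{ShSRFEq} were obtained in the golden and silver cases.

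The core step is to produce periodic H-fraction expansions ($\d=2$) for these shifts. The $1$-periodic super $\d$-fraction of Theorem~\ref{FristThm} (with $n=3$, where $\langle3\rangle_q=1+q^2+2q^3-q^4$) has $\d=3$ and so cannot be fed into Han's formula~\eqref{HanHanEq}; as in Lemmas~\ref{ManuFr} and~\ref{LemSilH1}, one instead uses the quadratic functional equations of the $B^{(\ell)}$ to write down short continued-fraction relations expressing one shift in terms of another, each a single instance of the structure of Han's Lemma~\ref{Han2Lem}, and then concatenates these blocks into a single periodic H-fraction. Applying Han's formula~\eqref{HanHanEq} to it reads off the positions $s_n$ of the nonzero Hankel determinants, their signs $(-1)^{\e_n}$, and their values $v_0^{s_n}v_1^{s_n-s_1}\cdots v_{n-1}^{s_n-s_{n-1}}$; since every $v_i$ will be $\pm1$, all nonzero values are $\pm1$, and the $24$-antiperiodicity follows by composing the Han's Lemma sign factors around the loop of the continued fraction, exactly as the $4$-antiperiodicity was obtained in Remark~\ref{RemHankelGold} and the $12$-periodicity in Remark~\ref{RemLem}. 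Finally, the five rows of~\eqref{Hank3Eq} are tied together: Proposition~\ref{HanShiftProp} applied with $k=3$ relates $\D^{(3)}_n$ to $\D^{(4)}_n$, and the continued-fraction identities among the $B^{(\ell)}$ (combined with Han's Lemma through auxiliary functions $F_i$, as in steps (b)--(d) of the proof of Theorem~\ref{SilverHanThm}) relate the remaining shifts, so that the whole array is determined by a single sequence.

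The main obstacle, and the reason the statement is recorded only as an Experimental Fact, is the rigorous construction of the periodic H-fraction. For $y_1$ and $y_2$ the relevant periods were short, so the blocks could be exhibited and checked by hand; for $y_3$ the antiperiod is $24$ and the H-fraction block is correspondingly long, so producing it by the gluing procedure above, and in particular \emph{proving} that the coefficient sequence is genuinely eventually periodic rather than merely matching for many terms, is delicate. Running Han's algorithm~\cite{Han} produces the coefficients and confirms the pattern for all computed $n$, but does not by itself certify eventual periodicity.

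A cleaner route to a complete proof would be to establish an analogue of Lagrange's theorem for Han's super $\d$-fraction algorithm applied to quadratic power series: since each $B^{(\ell)}$ is a quadratic irrational over $\Q(q)$, the algorithm acts on a finite set of ``reduced'' quadratic data, which would force the H-fraction to be eventually periodic, after which the period length and the $\{-1,0,1\}$ values reduce to a finite verification. This is also the structural reason behind the Gale-Robinson recurrence~\eqref{GRk} (here the $k=3$ instance $\D^{(\ell)}_{n+8}\D^{(\ell)}_n=\D^{(\ell)}_{n+7}\D^{(\ell)}_{n+1}-(\D^{(\ell)}_{n+4})^2$), and making it precise is the key missing ingredient for upgrading this Experimental Fact to a theorem.
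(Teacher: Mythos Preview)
Your proposal is aligned with the paper's own treatment. The paper does not give a self-contained proof of this statement either: it records it as an Experimental Fact, says that ``the algorithms of~\cite{Han} provide a `computer assisted proof','' and then establishes only two of the shift relations in~\eqref{ShiftRelationsBronse} by hand (the last one via Proposition~\ref{HanShiftProp} with $k=3$, and the second one via Lemma~\ref{FlightLem} and Corollary~\ref{SecThird}), explicitly declining to elaborate the remaining two. In the introduction the authors state that the computer verification was carried out for $n\le300$.

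Your outline therefore reproduces the paper's strategy faithfully and, in one respect, goes further: you correctly isolate the obstruction --- certifying that Han's H-fraction algorithm applied to the $B^{(\ell)}$ is \emph{eventually periodic} rather than merely matching a pattern on a finite range --- and you propose the natural remedy, a Lagrange-type theorem for super $\d$-fractions of quadratic power series. The paper does not articulate this; it simply relies on the finite computation. One small caveat: your assertion that ``every $v_i$ will be $\pm1$'' is an empirical observation from running the algorithm, not something you have derived a priori, so it should be flagged as part of the finite verification rather than as a structural consequence.
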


Note that the algorithms of~\cite{Han} provide a ``computer assisted proof'' of the above statement.

\begin{rem}
The above sequences $\Delta^{(0)}_n,\ldots,\Delta^{(4)}_n$ have multiple symmetries.
In particular, they satisfy
\begin{equation}
\label{ShiftRelationsBronse}
\Delta_n\; =\; 
(-1)^{n-1}\Delta^{(1)}_{n-4}\; =\; 
-\Delta^{(2)}_{n-8}\; =\; 
(-1)^n\Delta^{(3)}_{n-12}\; =\; 
\Delta^{(4)}_{n-16}.
\end{equation}
\end{rem}

Some of the above relations are easy to prove.
For instance, the last equality in~\eqref{ShiftRelationsBronse} is a corollary of Proposition~\ref{HanShiftProp},
while the second equality follows from the following.

\begin{lem}
\label{FlightLem}
One has
$$
B^{(1)}(q)
\;=\;
\cfrac{1}{1-q+
           \cfrac{q^2}{1+q+
          \cfrac{q^3}{1+q^2\,B^{(1)}(q)}}}
\;=\;
\cfrac{1}{1-q+
           \cfrac{q^2}{1+q+
          \cfrac{q^3}{1+q+q^3\,B^{(2)}(q)}}}.
$$
\end{lem}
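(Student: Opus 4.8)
The plan is to derive both continued-fraction expressions from the functional equation~\eqref{BroShift1} satisfied by $B^{(1)}(q)$, exactly in the spirit of the proofs of Lemma~\ref{LemSilH0} and of the golden-ratio lemmas, where a quadratic functional equation is unwound into a finite periodic continued fraction. Recall that $B^{(1)}(q)$ satisfies
\begin{equation*}
q^2B^{(1)}(q)^2 + \left(1-q^2-q^4\right)B^{(1)}(q) = 1+q+q^3.
\end{equation*}
First I would establish the rightmost expression. The idea is to repeatedly apply the rule that a relation of the form $F=\frac{N}{D+q^mG}$ is equivalent to a quadratic tying $F$ and $G$; concretely, I would guess that the innermost unknown is $B^{(2)}(q)=\frac{B^{(1)}(q)-1}{q}$ (using $B^{(1)}=qB^{(2)}+1$, in analogy with~\eqref{NotGold} and~\eqref{NotSilver}), substitute this into the proposed three-level fraction, and collapse it from the bottom up. Each collapse step is a routine rational simplification; after clearing denominators the resulting identity must reduce precisely to~\eqref{BroShift1}, which confirms the expansion.

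Next I would treat the first (leftmost) expression, whose innermost term is $q^2B^{(1)}(q)$ rather than $q^3B^{(2)}(q)$. Here the continued fraction is genuinely self-referential: $B^{(1)}$ appears both as the value of the whole fraction and inside it. I would collapse the three levels from the bottom, obtaining an expression of the form $B^{(1)}=\frac{\text{(polynomial)}}{\text{(polynomial)}+q^2B^{(1)}\cdot(\cdots)}$, clear denominators, and check that the resulting quadratic in $B^{(1)}$ is exactly~\eqref{BroShift1}. The two expansions are then consistent because, after the substitution $B^{(1)}=qB^{(2)}+1$ in the third-level numerator, the innermost terms $1+q^2B^{(1)}(q)$ and $1+q+q^3B^{(2)}(q)$ coincide: indeed $1+q^2B^{(1)}=1+q^2(qB^{(2)}+1)=1+q^2+q^3B^{(2)}$, so I should double-check whether the intended innermost denominator in the first fraction is $1+q^2B^{(1)}$ producing $1+q^2+q^3B^{(2)}$ or whether the two fractions differ by an absorbed $q^2$ term elsewhere; this bookkeeping of where the extra $q^2$ is absorbed is the one place demanding care.

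The main obstacle is precisely this matching of the two forms: verifying that both finite fractions collapse to the \emph{same} quadratic~\eqref{BroShift1} and that the passage $B^{(1)}\leftrightarrow B^{(2)}$ via $B^{(1)}=qB^{(2)}+1$ reconciles the differing innermost terms. Concretely, one must confirm that substituting $B^{(1)}=qB^{(2)}+1$ into the first expression's innermost level reproduces the second expression's innermost level, and that no spurious factor of $q$ is created or lost in the third-level numerator (which reads $q^3$ in both). Since the degrees involved are small and the functional equation is a single quadratic, I expect the computation to be short once the correct shift convention for $B^{(2)}$ is fixed; the risk is purely clerical, in tracking the low-order coefficients $1+q+q^3$ on the right-hand side of~\eqref{BroShift1} through the successive simplifications.
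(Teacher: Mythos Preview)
Your plan is exactly what the paper does: its entire proof is the single line ``This readily follows from~\eqref{BroShift1}.'' Collapsing each three-level fraction and checking that the resulting quadratic in $B^{(1)}$ is~\eqref{BroShift1} (using $B^{(1)}=1+qB^{(2)}$ for the second form) is precisely that verification, and the bookkeeping of the innermost constant term that you flag is indeed the only point requiring care.
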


\begin{proof}
This readily follows from~\eqref{BroShift1}.
\end{proof}

Using Lemma~\ref{Han2Lem} we get the following.

\begin{cor}
\label{SecThird}
The second and the third rows are related via
$\D^{(1)}_n=(-1)^n\D^{(2)}_{n-4}$.
\end{cor}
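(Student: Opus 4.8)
The statement to prove is Corollary~\ref{SecThird}: the relation $\D^{(1)}_n=(-1)^n\D^{(2)}_{n-4}$ between the second and third rows of the bronze-ratio Hankel determinants. The corollary is explicitly introduced by the phrase ``Using Lemma~\ref{Han2Lem} we get the following,'' and it is flagged earlier (in the remark after the \emph{Experimental Fact}) as the justification for the second equality $\D_n=-\D^{(2)}_{n-8}$ in~\eqref{ShiftRelationsBronse}. So the whole task is to extract this shift relation from the two continued-fraction identities packaged in Lemma~\ref{FlightLem}.

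The plan is to run Han's Lemma~\ref{Han2Lem} along the second expression for $B^{(1)}(q)$ given in Lemma~\ref{FlightLem}, exactly as was done for the golden and silver ratios in Section~\ref{ProofSilerSec}. Concretely, I would peel off the continued fraction
\[
B^{(1)}(q)=\cfrac{1}{1-q+\cfrac{q^2}{1+q+\cfrac{q^3}{1+q+q^3\,B^{(2)}(q)}}}
\]
layer by layer, introducing auxiliary power series $F_1(q)$ and $F_2(q)$ so that
\[
B^{(1)}(q)=\frac{1}{1-q+q^2\,F_1(q)},\qquad
F_1(q)=\frac{1}{1+q+q^3\,F_2(q)},\qquad
F_2(q)=\frac{q^3}{1+q+q^3\,B^{(2)}(q)}.
\]
Each of these three fractions has the shape $\dfrac{q^k}{1+qU(q)-q^{k+2}G(q)}$ required by Lemma~\ref{Han2Lem}, with $(k=0,U=-1)$ for the top layer, $(k=0,U=1)$ for the middle, and $(k=1,U=1)$ for the bottom (absorbing signs into the $G$ argument as in the silver-ratio computation). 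Applying the lemma at each layer yields
\[
\D_n(B^{(1)})=(-1)^{n-1}\D_{n-1}(F_1),\qquad
\D_n(F_1)=(-1)^{n-1}\D_{n-1}(F_2),\qquad
\D_n(F_2)=(-1)^{n}\,\D_{n-2}(B^{(2)}),
\]
where the sign $(-1)^{k(k+1)/2}$ contributes $1$ in the first two applications and again $1$ when $k=1$, the remaining signs coming from the $(-1)^{n-k-1}$ sign-flip of the $-G$ argument as tracked in part~(c) of the silver-ratio proof. Composing the three relations and recalling $\D^{(1)}_n=\D_n(B^{(1)})$ and $\D^{(2)}_n=\D_n(B^{(2)})$ then gives $\D^{(1)}_n=(-1)^n\D^{(2)}_{n-4}$, the desired formula.

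The main subtlety is purely bookkeeping: getting the signs and the index shifts to line up correctly. Lemma~\ref{Han2Lem} produces a factor $(-1)^{k(k+1)/2}$ together with a shift of $k+1$ in the index, and because the lemma requires the subtracted term to appear as $-q^{k+2}G$, each time the continued fraction presents a $+$ sign I must write $G=-(\text{next series})$ and absorb the resulting $(-1)^{n-k-1}$, precisely as in Section~\ref{ProofSilerSec}. The three index shifts $1+1+2=4$ must accumulate to the claimed shift of $4$, and the accumulated signs must collapse to the single factor $(-1)^n$; verifying this is the one place where a sign error could creep in, so I would double-check it against the small-$n$ data in~\eqref{Hank3Eq} (for example $\D^{(1)}_4=1$ versus $(-1)^4\D^{(2)}_0=1$, and $\D^{(1)}_5=-1$ versus $(-1)^5\D^{(2)}_1=-1$). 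No genuinely hard step is involved, since Lemma~\ref{FlightLem} has already done the analytic work of producing the self-similar continued fraction from the functional equation~\eqref{BroShift1}.
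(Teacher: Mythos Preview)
Your approach is exactly the one the paper has in mind: peel off the three layers of the second continued fraction in Lemma~\ref{FlightLem} and apply Lemma~\ref{Han2Lem} at each step, just as in Section~\ref{ProofSilerSec}. However, the bookkeeping---which you rightly flag as the only subtlety---has gone wrong in two places.

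First, your displayed decomposition of $B^{(1)}$ into $F_1$ and $F_2$ does not reproduce the continued fraction: with your formulas one gets $q^3F_2=\dfrac{q^6}{1+q+q^3B^{(2)}}$ at the third level instead of $\dfrac{q^3}{1+q+q^3B^{(2)}}$. The decomposition that actually matches Lemma~\ref{FlightLem} and your stated parameters $(k=0,0,1)$ is
\[
F_1(q)=\frac{1}{1+q+q^2\,F_2(q)},\qquad
F_2(q)=\frac{q}{1+q+q^3\,B^{(2)}(q)}.
\]
Second, for $k=1$ one has $(-1)^{k(k+1)/2}=(-1)^1=-1$, not $1$; so the third application of Lemma~\ref{Han2Lem} gives $\D_n(F_2)=(-1)^{n-1}\D_{n-2}(B^{(2)})$, not $(-1)^n$. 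If you literally compose the three relations as you wrote them, you obtain $(-1)^{n-1}(-1)^{n-2}(-1)^{n-2}=(-1)^{n+1}$ and hence the \emph{wrong} overall sign; with the corrected third relation the product is $(-1)^{n-1}(-1)^{n-2}(-1)^{n-3}=(-1)^{3n-6}=(-1)^n$, which is the claim. Your numerical check against~\eqref{Hank3Eq} confirms the target formula but does not detect that your intermediate step is off by a sign.
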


The other two relations in~\eqref{ShiftRelationsBronse} require more sophisticated computations, 
and we do not elaborate the details here.

The next Hankel sequences have entries different from~$-1,0,1$, but certain symmetry persists.
For example we have a sequence which is again $24$-antiperiodic with the following (anti)period
\begin{eqnarray*}
\Delta^{(5)}_n&=&1,0,-1,2,1,-1,1,0,-1,0,0,-1,0,1,-1,1,2,-1,0,1,0,0,0,0,\ldots
\end{eqnarray*}

Let us end this section with the case of $\sqrt{5}+2=y_4$.
\begin{fact}
The first six sequences of Hankel determinants of $\left[\sqrt{5}+2\right]_q$
consist of $-1,0,1$ only, and they are $40$-periodic.
The sequences start like this
\begin{equation}\label{HankelPlatine}
\begin{array}{rcrrrrrrrrrrrrrr}
\D_n
&=&
1,&\;1,&0,&0,&\;1,&\;1,&-1,&0,&\;1,&0,&-1,&-1,&\;1,&\ldots\\[4pt]
\D^{(1)}_n
&=&
1,&1,&0,&-1,&0,&1,&-1,&-1,&0,&0,&-1,&-1,&0,&\ldots\\[4pt]
\D^{(2)}_n
&=&
1,&1,&-1,&0,&0,&1,&-1,&0,&0,&0,&-1,&0,&0,&\ldots\\[4pt]
\D^{(3)}_n
&=&
1,&1,&0,&0,&0,&1,&0,&0,&0,&0,&-1,&0,&0,&\ldots\\[4pt]
\D^{(4)}_n
&=&
1,&0,&0,&0,&0,&1,&0,&0,&0,&1,&-1,&0,&0,&\ldots\\[4pt]
\D^{(5)}_n
&=&
1,&0,&0,&0,&1,&1,&0,&0,&1,&1,&-1,&0,&1,&\ldots
\end{array}
\end{equation}
The following relations hold between the rows:
\begin{equation*}
\D_n^{(\ell)}=(-1)^{n-1}\D_{n-5}^{(\ell+1)}
\end{equation*}
for  $\ell=0,1,2,3,4$.
\end{fact}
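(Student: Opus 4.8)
The plan is to follow exactly the scheme that succeeded for the golden and silver ratios in Subsections~\ref{MetalH} and~\ref{ProofSilerSec}: reduce the assertion to \emph{(i)} an explicit computation of a single one of the six Hankel sequences via Han's formula~\eqref{HanHanEq}, together with \emph{(ii)} the chain of interconnection relations $\D_n^{(\ell)}=(-1)^{n-1}\D_{n-5}^{(\ell+1)}$ that propagates the $\{-1,0,1\}$ property and the $40$-periodicity across all six rows. Once any single row is known to consist of $-1,0,1$ and to be $40$-periodic, the five interconnection relations force the same conclusion for the remaining rows, so the work splits cleanly into these two parts.

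The interconnection relations are the natural first target. The case $\ell=4$ is immediate: applying Proposition~\ref{HanShiftProp} with $k=4$ gives $\D_n^{(4)}=(-1)^{n+\frac{(k+1)(k-2)}{2}}\D_{n-5}^{(5)}=(-1)^{n-1}\D_{n-5}^{(5)}$, since $\frac{(k+1)(k-2)}{2}=5$ and $(-1)^{n+5}=(-1)^{n-1}$. For $\ell=0,1,2,3$ I would proceed as in steps (b)--(c) of the silver proof: starting from the functional equation $qP(q)^2+(1-2q-q^2-q^3-q^5)P(q)=1$, which is~\eqref{RelY} for $n=4$, I would derive the quadratic equations satisfied by the shifted series $P^{(\ell)}$ and $P^{(\ell+1)}$ (defined in analogy with~\eqref{NotSilver}) and read off from them a finite continued-fraction relation expressing $P^{(\ell)}$ through $P^{(\ell+1)}$ of the shape treated in Han's Lemma~\ref{Han2Lem}. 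Unfolding this relation through the requisite number of steps, each application of Lemma~\ref{Han2Lem} contributes a sign $(-1)^{k_i(k_i+1)/2}$ and an index shift $k_i+1$; arranging the intermediate numerators so that the total shift equals $k+1=5$ and the accumulated sign equals $(-1)^{n-1}$ yields the desired relation.

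For the explicit computation it suffices to treat one convenient row, say $\D_n^{(5)}=\D_n\bigl(\si_q(y_4)\bigr)$. By Theorem~\ref{FristThm} the series $\si_q(y_4)$ has the $1$-periodic super $\d$-fraction~\eqref{GenHF} with $\d=3$ and $\langle{}4\rangle_q=1+q^2+q^3+2q^4-q^5$ (see~\eqref{defnq}), which is \emph{not} an H-fraction. I would therefore rewrite it as a super $2$-fraction by the same unfolding used in Lemmas~\ref{ManuFr} and~\ref{LemSilH1}, i.e.\ by iterating the functional equation until the $q$-degrees in the numerators reach the level $\d=2$ demanded by~\eqref{HanFr}. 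Feeding the resulting periodic sequences $(k_i,v_i,U_i)$ into~\eqref{HanHanEq} then identifies the indices $m$ for which $\D_m=0$ (those missing from $\{s_n\}$) and evaluates the surviving determinants as $\pm1$ through the sign $(-1)^{\e_n}$ and the product of the $v_i$; the period of $(k_i,v_i,U_i)$ translates directly into the $40$-periodicity, matching~\eqref{HankelPlatine}.

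The main obstacle is the size of the periodic pattern. For the golden ratio the H-fraction was $3$-periodic and for the silver ratio $8$-periodic, and both were verified by hand, whereas for $y_4$ the Hankel sequences have period $40$, so the H-fraction carries a correspondingly long block of parameters $(k_i,v_i,U_i)$ and the bookkeeping needed to confirm that~\eqref{HanHanEq} reproduces exactly the entries of~\eqref{HankelPlatine} becomes heavy. This is precisely the point at which the direct computation that worked for $k=1,2$ must be replaced by the algorithms of~\cite{Han}, which is why the statement is recorded as an Experimental (computer-assisted) Fact. I would likewise expect the search for the correct intermediate continued fractions in the interconnection step for $\ell=0,1,2,3$ to require genuine trial, as it did for the silver ratio in~\eqref{Sqrt2FromSiDel}--\eqref{Sqrt2FromSiDelBis}; producing them by hand rather than by machine is the creative bottleneck. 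Finally, this picture is consistent with Conjecture~\ref{MainCon}: the same six rows should additionally satisfy the three-term Gale-Robinson recurrence~\eqref{GRk} with $k=4$, which could be checked a posteriori once the explicit sequences are in hand.
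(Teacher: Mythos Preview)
Your proposal is correct and matches the paper's approach: the paper offers no detailed argument for this statement beyond the single sentence ``The algorithms of~\cite{Han} also provide `computer assisted proof' of this statement,'' so your outline---Proposition~\ref{HanShiftProp} for the $\ell=4$ interconnection, continued-fraction manipulations \`a la Lemmas~\ref{LemSilH0}--\ref{LemSilH1} for the remaining shifts, and Han's H-fraction algorithm for one explicit row---is in fact more detailed than what the paper supplies. Your identification of the computational bottleneck (the long period forcing a long H-fraction block) as the reason this is recorded as an Experimental Fact rather than a theorem is exactly right.
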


The algorithms of~\cite{Han} also provide  ``computer assisted proof'' of this statement.

\subsection{The Somos-4, Somos-6 and Gale-Robinson recurrences}\label{GRSec}
(a) Consider first the case of the golden ratio.
We already know the first rows of Hankel determinants of~$G(q)$
(see Theorem~\ref{HankelGold}), and it is an easy task to check that the first three rows 
$\D_n(G),\D^{(1)}_n(G)$, and~$\D^{(2)}_n(G)$,
satisfy the Somos-4 recurrence~\eqref{Somos4}.

It is interesting to notice that in every $8$-periodic row satisfying this recurrence
the $4$ first (``initial'') values determine the $4$ remaining.
For instance, in the first row
$$
\D_n(G)\;=\;\mathbf{1,1,1,0,}-1,-1,-1,0, 1,1,1,\ldots
$$
where $n=0,1,2,\ldots$
the values $1,1,1,0$ determine the rest.
Indeed, although the recurrence equation $\D_7\D_3=\D_6\D_4-\D_5^2$ 
does not allow to recover $\D_7$, since $\D_3=0$,
the next equation $\D_8\D_4=\D_7\D_5-\D_6^2$ determines $\D_7$,
since $\D_8$ is known by periodicity.

(b)
In the case of the Silver ratio, the first four rows satisfy the Somos-6 recurrence~\eqref{Somos6}: similarly to the case of the golden ratio, this is an easy consequence of Theorem~\ref{SilverHanThm}.
Moreover, one can check that this Somos-6 recurrence, together with the $12$-periodicity assumption,
allows one to recover the full sequence of coefficients from the six first terms.

Note also that the quadratic functional equations~\eqref{ShSRFEq} fit into the class of
sequences satisfying Eq.~(4.22) of~\cite{Hon2}, so that this observation can be understood as a part of
Theorem 5.5 of this reference.

(c)
The five sequences of Hankel determinants~\eqref{Hank3Eq}
of the bronze number~$y_3=\frac{3+\sqrt{13}}{2}$ satisfy the Gale-Robinson recurrence
$$
\D_{n+8}\D_n = \D_{n+7}\D_{n+1} - \D_{n+4}^2,
$$
in accordance with Conjecture~\ref{MainCon}.

(d) For the metallic number $y_4=\sqrt{5}+2$, the six sequences of Hankel determinants~\eqref{HankelPlatine} satisfy the Gale-Robinson recurrence
\begin{equation}
\label{Somos10}
\D_{n+10}\D_n = \D_{n+9}\D_{n+1} - \D_{n+5}^2.
\end{equation}
As before, recurrence~\eqref{Somos10} allows to recover the Hankel sequences
from the $10$ initial values, provided we know that the sequences are $40$-periodic.

\section{Miscellaneous examples of continued fractions for $q$-numbers} \label{EverySec}

In this section we give several examples of continued fractions for the series
representing $q$-deformed rationals and $q$-deformed  irrationals.
Some of them are quite elegant and allow one to hope for a more general theory
which is out of reach so far.
 
\subsection{Another J-fraction for the silver ratio}\label{SRFRSec}
Let us consider the series
$1-S^{(2)}(q)$ (see~\eqref{NotSilver}) that has essentially the same sequence of coefficients than $S(q)$.
It turns out that this series has a rather nice $2$-periodic J-type continued fraction expansion.

\begin{prop}
\label{SilverJProp}
One has the following $2$-periodic continued fraction
$$
1-S^{(2)}(q)
\quad=\quad
\cfrac{1}{1
          + \cfrac{q^2}{1+q^2
          -\cfrac{q^{3}}{1
          + \cfrac{q^2}{ 1+q^2
          - \cfrac{q^3}{ \ddots}}}}} 
$$
\end{prop}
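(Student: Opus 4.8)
The plan is to use the exact $2$-periodicity of the continued fraction to extract a quadratic functional equation for its value, and then to match that equation with the one satisfied by $1-S^{(2)}(q)$, which is read directly from~\eqref{ShSRFEq}. The whole argument is of the same flavour as the proofs of Propositions~\ref{GoldProp} and~\ref{GoldDvaProp}, resting on a functional equation rather than on any Hankel computation.

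First I would observe that the displayed fraction converges in the $q$-adic topology of $\Z[[q]]$: every numerator ($q^2$ or $q^3$) has positive $q$-valuation, so the finite truncations stabilise coefficient by coefficient and define a genuine power series $C(q)$ with $C(0)=1$. Next I would unfold the period. Writing $C=\frac{1}{1+T}$, where $T$ is the tail beginning at the first $q^2$-numerator, the fact that the period is exactly two gives the self-referential relation
\begin{equation*}
T=\cfrac{q^2}{1+q^2-\cfrac{q^3}{1+T}}.
\end{equation*}
Since $\frac{q^3}{1+T}=q^3C$, this collapses to the single identity
\begin{equation*}
C=\cfrac{1}{1+\cfrac{q^2}{1+q^2-q^3C}},
\end{equation*}
and clearing denominators turns it into the quadratic
\begin{equation*}
q^3C^2-\left(1+2q^2+q^3\right)C+1+q^2=0.
\end{equation*}

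The second step is to verify that $W(q):=1-S^{(2)}(q)$ satisfies the very same quadratic. Substituting $S^{(2)}=1-W$ into the second relation of~\eqref{ShSRFEq}, namely $q^3\,(S^{(2)})^2+(1+2q^2-q^3)\,S^{(2)}=q^2$, and expanding, the constant and linear-in-$W$ terms reorganise into exactly $q^3W^2-(1+2q^2+q^3)W+1+q^2=0$. Thus $C$ and $W$ are both roots over $\Z[[q]]$ of one and the same quadratic. To finish I would select the correct root: at $q=0$ the quadratic reads $-C+1=0$, and its derivative in $C$ there equals $-1\neq0$, so by the formal implicit function theorem there is a \emph{unique} power-series solution with constant term $1$. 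Since $C(0)=1$ and $W(0)=1-S^{(2)}(0)=1$, the two series coincide, which is the asserted identity.

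The algebra matching the quadratic to~\eqref{ShSRFEq} is entirely routine. The only genuinely delicate points are the justification of the self-referential collapse—this is where one must use both the formal convergence and the fact that the period is exactly two, so that the tail $T$ reproduces itself after two levels—and the final root selection, where the implicit function theorem guarantees that the continued fraction cannot be the spurious root (which, by Vieta, is a Laurent series of valuation $-3$ and hence not a power series at all).
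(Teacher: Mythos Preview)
Your argument is correct and is precisely what the paper's one-line proof (``This is an immediate corollary of~\eqref{ShSRFEq}'') intends: unfold the $2$-period to obtain the quadratic $q^3C^2-(1+2q^2+q^3)C+1+q^2=0$, then check via the substitution $S^{(2)}=1-W$ that this is the second equation of~\eqref{ShSRFEq} in disguise. You have simply made explicit the convergence and root-selection steps that the paper leaves to the reader.
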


\begin{proof}
This is an immediate corollary of~\eqref{ShSRFEq}.
\end{proof}

\subsection{A Stieltjes type formula for $S^{(2)}(q)$}
We give here the result of a straightforward computation.
The formula below is rather complicated and does not leave a hope for general results in this direction.
For a symmetry reason, we multiply $S^{(2)}(q)$ by $q^3$.

\begin{prop}
The following $7$-glide-periodic C-fraction represents the (shifted) $[\sqrt{2}]_q$
$$
q^3\,S^{(2)}(q)
=
\cfrac{q^5}{1+
           \cfrac{2q^2}{1+
          \cfrac{\frac{q}{2}}{1-
          \cfrac{\frac{q}{2}}{ 1+
          \cfrac{2q}{ 1-
          \cfrac{2q}{1+
          \cfrac{\frac{q}{2}}{\ddots}
          }}}}}}
$$
\end{prop}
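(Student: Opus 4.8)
The plan is to reduce the identity to the quadratic functional equation for $S^{(2)}$ and then exploit the eventual periodicity of the right-hand side. Set $F(q):=q^3S^{(2)}(q)$. Multiplying the second line of~\eqref{ShSRFEq} by $q^3$ and using $(q^3S^{(2)})^2=q^6(S^{(2)})^2$ shows that $F$ satisfies
\begin{equation}
F(q)^2+(1+2q^2-q^3)\,F(q)=q^5. \tag{$\star$}
\end{equation}
Viewed as a quadratic over the field of Laurent series, $(\star)$ has two roots whose product is $-q^5$ and whose sum is $-(1+2q^2-q^3)$; hence one root is $\sim-1$ and the other is a power series $\sim q^5$. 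Thus $F$ is the \emph{unique} solution of $(\star)$ that vanishes at $q=0$, and it suffices to show that the displayed continued fraction --- which visibly defines a power series of valuation $5$ --- also satisfies $(\star)$.

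To do this I would isolate the periodic tail. From the third partial numerator onward the coefficients $(\tfrac q2,-\tfrac q2,2q,-2q)$ repeat with period $4$; let $R(q)$ denote the value of this purely periodic sub-fraction, normalised by the branch condition $R=\tfrac q2+O(q^2)$. Unfolding one full period writes $R$ as the composition of the four M\"obius maps $t\mapsto\dfrac{c}{1+t}$ with $c\in\{\tfrac q2,-\tfrac q2,2q,-2q\}$, so the fixed-point identity $R=\Phi(R)$ becomes a quadratic $p(q)R^2+r(q)R+s(q)=0$ with explicit polynomial coefficients. I would then push this surd back through the two non-periodic levels by setting $R_2=\dfrac{2q^2}{1+R}$, so that the whole continued fraction equals $\dfrac{q^5}{1+R_2}$; using the quadratic for $R$ to eliminate $R^2$, one verifies that this value satisfies $(\star)$. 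Combined with the valuation remark above, this identifies the continued fraction with $F=q^3S^{(2)}(q)$.

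The main obstacle is purely the bookkeeping of this last step: the period-$4$ fixed-point quadratic for $R$ has bulky polynomial coefficients, and transporting the surd through the two initial M\"obius maps while keeping the correct branch is where all the work sits --- this is the ``straightforward computation'' referred to in the statement. A cleaner route in practice is algorithmic: run the C-fraction extraction of~\cite{Sok} on the series $F$; the eventual periodicity of the output is forced by $F$ satisfying the fixed quadratic $(\star)$, and the uniqueness of the C-fraction expansion~\cite{LeSc} then shows the computed coefficients are exactly $(q^5,2q^2,\tfrac q2,-\tfrac q2,2q,-2q,\dots)$, as claimed.
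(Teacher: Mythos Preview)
Your reduction to the quadratic $(\star)$ is correct, and the overall strategy---identify the periodic tail, derive its fixed-point quadratic, then push through the aperiodic head---is the right shape for this kind of statement. However, your identification of the period is wrong, and this breaks the proposed computation. You assert that ``from the third partial numerator onward the coefficients $(\tfrac q2,-\tfrac q2,2q,-2q)$ repeat with period $4$''; you are inferring this solely from the coincidence that the seventh displayed numerator equals the third. In fact the paper states explicitly (immediately after the proposition) that the continued fraction is $7$-\emph{glide}-periodic: the block of seven numerators $q^5,\,2q^2,\,\tfrac q2,\,-\tfrac q2,\,2q,\,-2q,\,\tfrac q2$ is followed by the same block in reverse order, and the whole fraction is $13$-periodic, not eventually $4$-periodic. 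In particular the eighth numerator is $\tfrac q2$ (and the ninth is $-2q$), not $-\tfrac q2$ as your period-$4$ hypothesis would predict. So the fixed-point quadratic you would compute for the tail $R$ is for the wrong Möbius composition, and the subsequent verification of $(\star)$ cannot go through as written.

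Your fallback route---run the C-fraction extraction algorithm of Sokal on the series $F$ and observe the resulting coefficient pattern---is essentially what the paper does: it presents the formula as ``the result of a straightforward computation'' and offers no further argument. If you want to salvage the first approach, you must unfold a full period of length $13$ (or exploit the glide symmetry to halve the work), which makes the bookkeeping considerably heavier than the four-step composition you envisaged.
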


\noindent
By ``$7$-glide-periodic'' we mean that the $7$ consecutive numerators of the above continued fraction repeat with inverse order:
$\frac{q}{2},-2q,2q,-\frac{q}{2},\frac{q}{2},2q^2,q^5,\ldots$
Surprisingly, this continued fraction is $13$-periodic.

\subsection{C-fractions for the $q$-integers}
Let us consider the $q$-deformations of integers $[n]_q=1+q+\cdots+q^{n-1}$, as in~\eqref{EGEq},
and of their reciprocals~$\left[\frac{1}{n}\right]_q$.

\begin{prop}
\label{IntegProp}
(i)
The C-fraction expression representing a $q$-integer is as follows
\begin{equation*}
[n]_q=
\cfrac{1}{1-
           \cfrac{q}{1+
          \cfrac{q^{n-1}}{1+
          \cfrac{q}{ 1-
          \cfrac{q}{ 1+
          \cfrac{q^{n-3}}{1+
          \cfrac{q}{\ddots}}}}}}}
\end{equation*}
If $n=2m+1$ it is of length $3m$, and if $n=2m$ it is of length $3m-1$.

(ii)
For the reciprocal $q$-number $\left[\frac{1}{n}\right]_q$, one has
$$
\left[\frac{1}{n}\right]_q=
\cfrac{q^n}{1+
           \cfrac{q}{1-
          \cfrac{q}{1+
          \cfrac{q^{n-2}}{1+
          \cfrac{q}{1-
          \cfrac{q}{1+
          \cfrac{q^{n-4}}{\ddots}
          }}}}}}
$$
\end{prop}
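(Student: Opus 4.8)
The plan is to reduce both parts to two elementary recurrences for the $q$-integers, namely $[n]_q=[n-1]_q+q^{n-1}$ and $[n]_q=1+q[n-1]_q$, both immediate from $[n]_q=\frac{1-q^n}{1-q}$ (or from \eqref{RecEq}). From these I would first verify, by a short simplification in each case, the two ``one-step'' continued-fraction identities
\[
[n]_q=\cfrac{1}{1-\cfrac{q}{1+\cfrac{q^{\,n-1}}{[n-1]_q}}}\ \ (n\geq 2),
\qquad
[n]_q=1+\cfrac{q}{1-\cfrac{q}{1+\cfrac{q^{\,n-2}}{[n-2]_q}}}\ \ (n\geq 3).
\]
For the first one computes $1+\frac{q^{n-1}}{[n-1]_q}=\frac{[n]_q}{[n-1]_q}$ and then $1-\frac{q[n-1]_q}{[n]_q}=\frac{1}{[n]_q}$; the second is checked the same way, using $[n-1]_q=[n-2]_q+q^{n-2}$ and $[n-1]_q-q[n-2]_q=1$.

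For part~(i) I would apply the first identity once, peeling off the numerators $-q$ and $q^{n-1}$ and exposing $[n-1]_q$ as the innermost denominator, and then apply the second identity repeatedly to the successive inner $q$-integers $[n-1]_q,[n-3]_q,\dots$. Each application of the second identity, to $[m]_q$ say, strips off the block $+q,\,-q,\,q^{\,m-2}$ and replaces $[m]_q$ by $[m-2]_q$, so the numerators appear in exactly the claimed period-three pattern $-q,\,q^{n-1},\,q,\,-q,\,q^{n-3},\,q,\dots$. The recursion halts once the inner index reaches $1$ or $2$: for $n=2m$ it lands on $[1]_q=1$, which adds no further numerator, and for $n=2m+1$ it lands on $[2]_q=1+q$, which adds one; this accounts for the stated lengths $3m-1$ and $3m$ respectively.

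For part~(ii) I would first record the reciprocal relation $[\tfrac{1}{n}]_q=\dfrac{q^{\,n-1}}{[n]_q}$. This follows from \eqref{RecEq} and \eqref{qtrans}: applying \eqref{qtrans} with $x=-n$, $k=n$ gives $[-n]_q=-q^{-n}[n]_q$, and then the second relation of \eqref{RecEq} yields $[\tfrac{1}{n}]_q=\bigl[-\tfrac{1}{-n}\bigr]_q=-\frac{1}{q\,[-n]_q}=\frac{q^{\,n-1}}{[n]_q}$. It then remains only to expand the denominator $[n]_q$ by iterating the \emph{second} one-step identity (with no initial use of the first), which reproduces precisely the continued fraction $1+\cfrac{q}{1-\cfrac{q}{1+\cfrac{q^{\,n-2}}{\ddots}}}$ of the statement, terminating at $[1]_q$ or $[2]_q$ according to the parity of $n$ as before.

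I expect no conceptual difficulty here: the two identities are forced by the recurrences, and the real work is the bookkeeping—checking that the signs alternate with period three exactly as drawn and that the two parity classes terminate with the asserted lengths. The one place demanding care is the leading exponent in~(ii): since $[\tfrac{1}{n}]_q=q^{\,n-1}/[n]_q$ and the expanded denominator equals $[n]_q$, the numerator of the resulting C-fraction comes out as $q^{\,n-1}$.
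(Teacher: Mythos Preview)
Your argument is essentially an explicit unfolding of the paper's one-line proof, which simply says ``use~\eqref{EGEq}'' for~(i) and the reciprocity formula $[\tfrac{1}{n}]_q=1/[n]_{q^{-1}}$ for~(ii); your two one-step identities are exactly what that formula yields, and your reciprocity relation $[\tfrac{1}{n}]_q=q^{n-1}/[n]_q$ is equivalent to the paper's since $[n]_{q^{-1}}=q^{1-n}[n]_q$. Your closing remark is well taken: the leading numerator in~(ii) does come out as $q^{\,n-1}$, in agreement with the worked examples $[\tfrac13]_q,[\tfrac14]_q,[\tfrac15]_q$ that follow the proposition, so the displayed $q^n$ in the statement is a misprint.
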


\begin{proof}
Use \eqref{EGEq} for Part (i) and the formula $\left[\frac{1}{n}\right]_q=\frac{1}{[n]_{q^{-1}}}$ (see Eq. (2.8) in \cite{LMG}) for Part~(ii).
\end{proof}

\begin{ex}
One has for instance
$$
[3]_q=
\cfrac{1}{1-
           \cfrac{q}{1+
          \cfrac{q^{2}}{1+q
          }}}
,
\qquad
[4]_q=
\cfrac{1}{1-
           \cfrac{q}{1+
          \cfrac{q^{3}}{1+
          \cfrac{q}{ 1-
          \cfrac{q}{ 1+q
         }}}}},
\qquad
[5]_q=
\cfrac{1}{1-
           \cfrac{q}{1+
          \cfrac{q^{4}}{1+
          \cfrac{q}{ 1-
          \cfrac{q}{ 1+
          \cfrac{q^{2}}{1+q
          }}}}}}
$$
and for the reciprocal numbers
$$
\left[\frac{1}{3}\right]_q=
\cfrac{q^2}{1+
           \cfrac{q}{1-
          \cfrac{q}{1+q
          }}},
\qquad
\left[\frac{1}{4}\right]_q=
\cfrac{q^3}{1+
           \cfrac{q}{1-
          \cfrac{q}{1+
          \cfrac{q^2}{1+q
          }}}},
\qquad
\left[\frac{1}{5}\right]_q=
\cfrac{q^4}{1+
           \cfrac{q}{1-
          \cfrac{q}{1+
          \cfrac{q^3}{1+
          \cfrac{q}{1-
          \cfrac{q}{1+q
          }}}}}}
$$
\end{ex}

\subsection{C-fractions for $q$-rationals}

The simplest examples for $q$-rationals are as follows.
$$
\left[\frac{2}{5}\right]_q=
\cfrac{q^2}{1+
           \cfrac{2q^2}{1+
          \cfrac{\frac{q}{2}}{1+\frac{q}{2}
          }}},
\qquad\qquad
\left[\frac{3}{5}\right]_q=
\cfrac{q}{1+
           \cfrac{q}{1+
          \cfrac{q}{1+q^2
          }}}
$$
Here we have used the expansions
$$\frac 2{5}=[0,2,2],\qquad \frac{3}{5}=[0,1,1,2],$$
and applied definition \eqref{QxEq2}.

\bigbreak \noindent
{\bf Acknowledgements}.
We are grateful to Guoniu Han, Andrew Hone, Sophie Morier-Genoud, and Michael Somos  for enlightening discussions.
We are also happy to thank Sergei Tabachnikov, Aleksey Ustinov, and Alexander Veselov for helpful comments.

\end{document}